\newtheorem{propo}{Proposition}[section]
\newtheorem{lemma}[propo]{Lemma}
\newtheorem{corol}[propo]{Corollary}
\newtheorem{theo}[propo]{Theorem}
\newtheorem{examp}[propo]{Example}
\newtheorem{rem}[propo]{Remark}
\theoremstyle{definition}
\newcommand{\ld}{,\ldots ,}
\newcommand{\ra}{ \rightarrow }
\newcommand{\lan}{ \langle }
\newcommand{\ran}{ \rangle }
\newcommand{\diag}{\mathop{\rm diag}\nolimits}
\newcommand{\Id}{\mathop{\rm Id}\nolimits}
\newcommand{\Irr}{\mathop{\rm Irr}\nolimits}
\newcommand{\Om}{\Omega}
\newcommand{\CC}{\mathop{\mathbb C}\nolimits}
\newcommand{\FF}{\mathbb {F}}
\newcommand{\ZZ}{\mathop{\mathbb Z}\nolimits}
\newcommand{\al}{\alpha}
\newcommand{\be}{\beta}
\newcommand{\ep}{\varepsilon}
\newcommand{\lam}{\lambda }
\newcommand{\om}{\omega }
\newcommand{\up}{^{-1}}
\newcommand{\si}{\sigma }
\def\11{$(1)$}
\def\22{$(2)$}
\def\33{$(3)$}
\def\d12{{_{12}}}
\def\ almost cyclic{{algebraically closed }}
\def\ almost cyclicf{{algebraically closed field }}
\def\ almost cyclicft{{algebraically closed field. }}
\def\au{{automorphism }}
\def\ei{{eigenvalue }}
\def\eis{{eigenvalues }}
\def\f{{following }}
\def\ho{{homomorphism }}
\def\hos{{homomorphisms }}
\def\ii{{if and only if }}
\def\ir{{irreducible }}
\def\irt{{irreducible. }}
\def\irr{{irreducible representation }}
\def\itf{{It follows that }}
\def\mult{{multiplicity }}
\def\po{{polynomial }}
\def\rep{{representation }}
\def\reps{{representations }}
\def\rept{{representation. }}
\def\St{{Suppose that }}
\newcommand{\bl}{\begin{lemma}\label}
\newcommand{\el}{\end{lemma}}
\newcommand{\bp}{\begin{proof}}
\def\ag{algebraic group }
\newcommand{\med}{\medskip}
\def\ac{almost cyclic }
\newcommand{\enp}{\end{proof}}
\def\st{Suppose that }
\date{}
\begin{document}

\title{Upper bounds for eigenvalue multiplicities\\
of almost cyclic elements in irreducible representations\\ of simple algebraic groups}
\author{A.E. Zalesski}
\maketitle


\begin{abstract} We study the rational \ir \reps of simple algebraic groups in which some non-central semisimple element
has at most one \ei of \mult greater than $1$. We bound the \mult
of this \ei in terms of the rank of the group.  
\footnote{2020 Mathematics subject classification: 20G05, 15A18}
\footnote{
Keywords: simple algebraic groups, representations,  regular elements, eigenvalue multiplicities. 
}\end{abstract}

\bigskip
\centerline{{\it Dedicated to the memory of Irina Suprunenko}}
\bigskip

\section{Introduction}\label{sec:intro}

In this paper we study a special case of a general problem on \ei multiplicities of semisimple elements in \ir \reps of algebraic groups. All \reps of algebraic groups  and their modules   are assumed to be rational. This problem is connected with a similar problem on weight multiplicities.
In  \cite{TZ2} we determine the \ir \reps of simple algebraic groups $G$  whose all non-zero weights are of \mult 1, and compute the zero weight \mult in each case. Note that  weights are one-dimensional \reps of a maximal torus $T$, say,  of $G$,
and the \mult of a weight $\om$, say, in a \rep $\rho$ of $G$ is the \mult of  $\om$ as a constituent of $\rho(T)$.
Turning to individual non-central semisimple elements $g\in G $,  an analog of this result could be (i) obtaining a list of the \ir \reps $\rho$ of $G$ such that at most one \ei \mult of $\rho(g)$ is greater than 1, and then (ii) determining the \mult of the exceptional eigenvalue. An essential step of understanding problem (i) is made in \cite{TZ21}, where it is proved that  all non-zero weights of $\rho$ must be  of \mult 1. This does not mean however that the \ei multiplicities of  $\rho(g)$ 
are the same as the weight multiplicities. This can be seen on examples with $G$ to be of classical type and $\rho$ the natural \rep of $G$. Moreover, in these examples $m_g(\rho)$, the maximum \ei \mult of $\rho(g)$, is almost arbitrary (subject to the obvious condition $m_g(\rho)\leq \dim\rho$),
whereas all weight multiplicities of  $\rho$  are equal to 1. However, it is shown below that $m_g(\rho)$ can be bounded in terms of the rank of $G$. 
An initial step in this direction is made in \cite{TZ21}, where, for a non-central non-regular semisimple element $g\in G$, it is shown that the natural \rep of a classical group $G$ is essentially the only one in which at most one \ei \mult of $\rho(g)$ is greater than 1;  that result is quoted below as Theorem \ref{c99}. To be more precise, $\rho$ is a twist of the natural representation. (We call a \rep $\rho'$ a twist of $\rho$ if $\rho'(g)=\rho(\si(g))$ for $g\in G$ and a surjective \ho $\si:G\ra G$.) Therefore,
the current paper mainly deals with regular semisimple elements.

\def\hw{highest weight }

The main result of this paper is the following theorem:

\begin{theo}\label{ib2} Let $G$ be a simple  algebraic group of rank n  and $V$ an  \ir  $G$-module. Let $s\in G$ be a  regular semisimple element and let $m_s(V)$ be the maximum of the \ei multiplicities of $s$ on V. Suppose that at most one \ei \mult of $s$   on V exceeds $1$.  Then $m_s(V)\leq n+1.$ \end{theo}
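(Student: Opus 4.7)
The plan is to combine the main input from \cite{TZ21} with the classification of \cite{TZ2}, and then run a case-by-case combinatorial analysis of the surviving candidate modules.

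First I would invoke \cite{TZ21}: under the hypothesis on $\rho(s)$, every non-zero weight of $V$ is of multiplicity $1$. This places $V$ on the short list produced by \cite{TZ2}: exterior powers of the natural module for type $A_n$; the natural, spin and half-spin modules for $B_n$, $C_n$, $D_n$; the adjoint representation in every type; and a small number of fundamental modules for the exceptional types. The twists-of-natural-module case is settled by Theorem \ref{c99}, so attention is focused on the remaining entries of the list.

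Next, I would rephrase the hypothesis in weight-lattice terms. Fix a maximal torus $T \ni s$ and set $L = \{\chi \in X^*(T) : \chi(s) = 1\}$. Regularity of $s$ becomes $\Phi \cap L = \emptyset$, and two weights $\mu, \mu'$ of $V$ yield the same eigenvalue of $\rho(s)$ iff $\mu - \mu' \in L$. The hypothesis then reads: at most one coset of $L$ in $X^*(T)$ meets the weight set $\Pi(V)$ in more than one element (with the zero weight counted with its full multiplicity when it is the exceptional eigenvalue); the task is to bound that coset's intersection with $\Pi(V)$ by $n+1$.

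The heart of the proof is then a case-by-case bound. For the adjoint module, $\dim V_0 = n$ while every root satisfies $\alpha(s) \neq 1$, so the eigenvalue $1$ has multiplicity exactly $n$; the hypothesis forces the remaining eigenvalues $\alpha(s)$ to be pairwise distinct, and $m_s(V) = n$. For $\wedge^k$ of the natural rep of $A_n$, the weights are $\epsilon_{i_1} + \cdots + \epsilon_{i_k}$ and the hypothesis is a coincidence condition on the products $s_{i_1} \cdots s_{i_k}$; one would show that no $n+2$ such $k$-subsets can share a common product without either violating regularity (some $s_i = s_j$) or producing a second non-trivial coincidence (violating the single-exception assumption). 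The spin and half-spin cases are analogous, working with half-sums $\tfrac12 \sum \pm \epsilon_i$, and the finitely many exceptional-type modules are verified by direct inspection of their weights.

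The main obstacle is this last combinatorial step for the exterior-power and spin families. A useful uniform tool is the following observation: if $\mu^{(1)}, \ldots, \mu^{(n+2)} \in \Pi(V)$ share a common eigenvalue, then the $n+1$ non-zero differences $\mu^{(i)} - \mu^{(1)}$ all lie in the rank-$n$ root lattice $Q$ and in $L$, hence satisfy a non-trivial integer linear relation. Exploiting the very restrictive explicit shape of the weights in each family, such a relation must specialise to either a root of $G$ lying in $L$ (contradicting regularity) or a second coset of $L$ that meets $\Pi(V)$ in more than one weight (contradicting the hypothesis), yielding the desired bound $m_s(V) \le n+1$.
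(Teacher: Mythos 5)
Your opening move coincides with the paper's: invoke \cite{TZ21} to force all non-zero weights of $V$ to have multiplicity $1$, then appeal to the classification of such modules from \cite{TZ2}, and finish by a case-by-case combinatorial analysis in which a coincidence of eigenvalues is converted into either a root vanishing on $s$ (contradicting regularity) or a second repeated eigenvalue. The reformulation via the lattice $L=\{\chi:\chi(s)=1\}$ is a clean way to say what the paper encodes in Lemma \ref{hh7}. However, there are two genuine gaps.

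First, your candidate list is wrong in a way that loses the extremal case. The classification of irreducible modules with multiplicity-free non-zero weights is \emph{not} restricted to exterior powers, natural/spin/adjoint modules and a few exceptional fundamental modules: even among $p$-restricted modules it contains families such as $a\om_1$, $c\om_i+(p-1-c)\om_{i+1}$ for $A_n$ and $\om_{n-1}+\frac{p-3}{2}\om_n$, $\frac{p-1}{2}\om_n$ for $C_n$; and, crucially, a general module on the list is a Steinberg tensor product $V_{\lam_0}\otimes V_{\lam_1}^{Fr}\otimes\cdots$ of Frobenius twists of such modules. The tensor-decomposable case is not a footnote: the paper's Example \ref{e2x} shows that the bound $n+1$ is \emph{attained} by $V_{\om_1+p\om_n}=V_{\om_1}\otimes V_{\om_n}^{Fr}$ for $A_n$, a module absent from your list. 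The paper spends a substantial part of Section 4 (Lemmas \ref{ma2}, \ref{bb1}, \ref{an2a}, \ref{r44}, \ref{tp7}, \ref{e62}) on exactly this case, using the fact that $s$ must be cyclic on each tensor factor and a pigeonhole through the $n+1$ weights of $V_{\om_1}$. Your proposal never engages with it.

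Second, your ``uniform tool'' does not do what you claim. Knowing that $n+1$ differences $\mu^{(i)}-\mu^{(1)}$ lie in $Q\cap L$ and hence satisfy an integer linear relation does not by itself produce a root in $L$ or a second populated coset: one needs, for each specific family, to exhibit an actual root $\al$ with $\mu-\al$ and $\nu-\al$ \emph{both weights of $V$} (this is the hypothesis of the paper's Lemma \ref{hh7}), and this is where the real work lies. For instance, for $\wedge^k$ of the natural $A_n$-module the paper's Lemma \ref{ep9} finds such an $\al=\ep_d-\ep_l$ by a counting argument on supports of the $k$-subsets, valid only under the inequality $n+1<(k+1)k$; for the spin modules of $D_n$ with $n$ odd there are genuine exceptional configurations ($\mu+\nu=\pm\ep_i$) that must be excluded separately (Lemma \ref{11d}); and for the Table 1 modules with $\dim V^T>1$ one must first prove that the eigenvalue-$1$ space equals the zero weight space (Theorem \ref{do1}(1)), which is not automatic and itself requires Lemma \ref{mk8} and the analysis of short-root modules. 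As written, your argument asserts the conclusion of these lemmas without supplying them.
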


In fact, we can give more details   as follows:

  \begin{theo}\label{ab9} Let $G$ be a simple  algebraic group of rank n  and $V$ an   \ir  $G$-module.  Let $s\in G$ be a regular  semisimple element and let $m_s(V)$ be the maximum of the \ei multiplicities of $s$ on V. Suppose that at most one \ei \mult of $s$   on V exceeds $1$   and  $m_s(V)>1$. Then either $m_s(V)=2$ or  one of \f holds. 

\med
$(1)$ $G=A_n$ and $m_s(V)\leq n+1;$ 

\med
$(2)$  V is a twist of a G-module from Table $1$ and $m_s(V)$ equals the weight zero \mult of V recorded there; in particular $n-2\leq m_s(V)\leq n+1;$

\med
$(3)$ $G=E_6$    and $m_s(V)\leq 3.$    \end{theo}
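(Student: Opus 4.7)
The plan is to reduce the statement to a finite combinatorial problem on weight sets and then handle it case by case. The starting point is the structural result of \cite{TZ21} already cited in the introduction: the hypothesis that at most one eigenvalue multiplicity of $\rho(s)$ exceeds $1$ forces every non-zero weight of $V$ to have multiplicity $1$. Consequently $V$ must appear in the classification obtained in \cite{TZ2} of irreducible representations of $G$ with this small-weight property, and for each entry of that list the zero weight multiplicity $\dim V_0$ is tabulated explicitly. So the question reduces to going through a known, finite list of candidate $V$ and, for each, computing the largest possible $m_s(V)$ as $s$ ranges over regular semisimple elements satisfying the hypothesis.

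For the subsequent analysis I would fix a maximal torus $T$ containing $s$ and note that the eigenvalues of $\rho(s)$ are the scalars $\lambda(s)$ with $\lambda$ ranging over the weights of $V$ counted with multiplicity. Since non-zero weights have multiplicity one, an eigenvalue of multiplicity greater than $1$ must arise either from the zero weight space (contributing $\xi=1$ with multiplicity $\dim V_0 + |\{\lambda\neq 0 : \lambda(s)=1\}|$) or from a coincidence $\lambda_1(s)=\cdots=\lambda_k(s)$ among distinct non-zero weights. The hypothesis states that exactly one of these two mechanisms produces a fiber of size larger than $1$, and $m_s(V)$ is the size of that fiber.

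I would then split into cases according to the size of $\dim V_0$ and the Lie type of $G$. When $\dim V_0\ge 2$, a sufficiently generic regular $s$ separates all non-zero weights, so the exceptional eigenvalue is forced to be $1$ and $m_s(V)=\dim V_0$; running through the classification produces exactly the rows of Table~1 and the bounds $n-2\le m_s(V)\le n+1$ of case~(2). When $\dim V_0\le 1$ the excess multiplicity must come from a collision among non-zero weights. Type $A_n$ has the richest collision patterns (the Weyl group $S_{n+1}$ moves weights in orbits of size up to $n+1$), yielding case~(1). For classical types other than $A_n$ and for $F_4$, $G_2$, $E_7$, $E_8$ the surviving candidate list is short and direct inspection of the weight sets together with the requirement that only one fiber be non-trivial gives $m_s(V)\le 2$. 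A specific small representation of $E_6$ still allows triple coincidences but no larger ones, producing case~(3).

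The main obstacle I expect lies in the last step: for each surviving representation with $\dim V_0\le 1$, one must rule out the existence of a regular semisimple $s$ producing one fiber of very large size while keeping all other eigenvalues simple. Regularity only excludes coincidences coming from roots via $\alpha(s)=1$, so the forbidden configurations come from non-root $\mathbb{Z}$-linear relations among weight differences that vanish at $s$; controlling these relations simultaneously across the full weight set, and showing that the only configurations that survive are those listed in the statement, is where the bulk of the work will lie and requires detailed use of the combinatorics of the root and weight lattices of each Lie type.
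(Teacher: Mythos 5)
Your opening reduction coincides with the paper's: by Theorem~\ref{ag8} the almost-cyclicity hypothesis forces all non-zero weights of $V$ to have multiplicity $1$, so $V$ is constrained by the classification of \cite{TZ2} (Tables 1 and 2), and one then argues type by type. But the proposal has genuine gaps beyond that point. First, in the case $\dim V^T\geq 2$ you conclude that ``a sufficiently generic regular $s$ separates all non-zero weights, so \dots\ $m_s(V)=\dim V_0$.'' Genericity is not available: item (2) of the theorem asserts $m_s(V)=\dim V^T$ for \emph{every} regular almost cyclic $s$, and a non-generic such $s$ could a priori satisfy $\mu(s)=1$ for some non-zero weight $\mu$, inflating the fiber over the eigenvalue $1$ beyond $\dim V^T$. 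Ruling this out is exactly Theorem~\ref{do1}(1), whose proof is not soft: if $\mu(S)=1$ for a non-zero weight $\mu$, one shows $\mu$ is either a root (contradicting regularity via Lemma~\ref{re3}) or shifts by a simple root $\beta$ into $\Om(V)$ (Lemma~\ref{mk8}), forcing $\beta(S)=1$ unless $\beta\notin\Om(V)$, and the latter possibility is eliminated by analysing which roots occur as weights (Lemmas~\ref{wtlattice}(3) and~\ref{a0b}). None of this appears in your outline.

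Second, the case $\dim V^T\leq 1$ --- which is the bulk of the proof --- is deferred to ``direct inspection'' and explicitly flagged as ``where the bulk of the work will lie,'' so the argument is not actually supplied. The missing engine is Lemma~\ref{hh7}: if $\mu(s)=\nu(s)$ for distinct weights and $\mu-\al,\nu-\al\in\Om(V)$ for a \emph{single} root $\al$, then $s$ cannot be almost cyclic, since regularity gives $\al(s)\neq 1$. Every type-by-type bound in the paper (spin modules of $B_n$ and $D_n$, $V_{\om_3}$ for $C_3$, the half-spin-type modules of $C_n$ in small characteristic, the $27$-dimensional $E_6$-module via its explicit weight diagram, the $56$-dimensional $E_7$-module via restriction to $A_7$, \dots) is a combinatorial verification of this shiftability condition. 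You also do not address tensor-decomposable $V$ at all: by Steinberg's theorem a non-$p$-restricted $V$ factors as $V_1\otimes V_2$, the cyclicity of $s$ on each factor (Lemma~\ref{ma2}) and inclusions of the form $\Om(M)\subseteq\Om(M_1)+\Om(M_2)$ must then be exploited separately (Lemmas~\ref{td2}, \ref{an2a}, \ref{r44}, \ref{tp7}, \ref{e62}), and for $A_n$ the sharp example attaining $m_s(V)=n+1$ is precisely the tensor-decomposable module $V_{\om_1+p\om_n}$, so this case cannot be folded into the $p$-restricted analysis. As it stands, the proposal is a correct plan for the reduction step, with the substantive portion of the proof named but not carried out.
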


The bound in item (1) of Theorem \ref{ab9} is   sharp for $n>1$, at least for $p\neq 0$, see Example \ref{e2x}.
In this example for $G=A_n$ we construct a tensor-decomposable $G$-module $V$ with $m_s(V)=n+1$.  For $E_6$ see Example \ref{e65}.

\begin{corol}\label{hv2} 
Let G be a simple algebraic group  in defining characteristic $p\geq 0$, $s\in G$ a regular semisimple element  and let V be  an \ir G-module with \hw $\om\neq 0$.  Suppose that $\dim  V>2$ and at most one \ei \mult of $s$   on V exceeds $1$. Then 
$m_s(V)< \dim V/2$, unless $G=A_1$, $\dim V=3,p\neq  2$ or  $\dim V=4$ and either  $\om=3p^i\om_1, p  \neq 2, 3$ or  $\om=(p^i+p^j)\om_1$ with $1\leq i<j$. (If $p=0$ and $n=4$ then $\om=3\om_1$.)
\end{corol}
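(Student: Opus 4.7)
The plan is to derive the corollary as a direct consequence of Theorem \ref{ab9}, by comparing $m_s(V)$ with $\dim V/2$ case by case. Writing $k=m_s(V)$ and $r$ for the number of simple eigenvalues of $\rho(s)$, the hypothesis that at most one eigenvalue has multiplicity exceeding $1$ forces $\dim V=k+r$, so the desired inequality $m_s(V)<\dim V/2$ is equivalent to the arithmetic condition $k<r$. The argument then splits on the value of $k$ together with the alternatives supplied by Theorem \ref{ab9}.

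If $k=1$, the inequality reduces to $\dim V>2$, which holds by hypothesis. For $k=2$, the inequality is equivalent to $\dim V\geq 5$, so the exceptions are exactly the irreducible $G$-modules of dimension $3$ or $4$ admitting a regular semisimple $s$ with a single eigenvalue of multiplicity $2$. I would enumerate these using the classification of low-dimensional irreducibles of simple algebraic groups together with Steinberg's tensor product theorem. The three-dimensional irreducibles arise only from $A_1$ as $V(2\omega_1)$ (requiring $p\neq 2$), since on the natural module of $A_2$ a regular $s$ already has simple spectrum; up to Frobenius twist this is the first exception. The four-dimensional irreducibles arise from the natural modules of $A_3$ and $C_2$ (where regular $s$ acts with simple spectrum and contributes no exception) and from $A_1$, where Steinberg's theorem pins down exactly the two remaining families $V(3p^i\omega_1)$ (with $p\neq 2,3$) and $V(\omega_1)^{[i]}\otimes V(\omega_1)^{[j]}$. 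In each exceptional case one then exhibits a regular $s$ realising $m_s(V)=2$.

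For $k\geq 3$ I invoke the three alternatives of Theorem \ref{ab9}. In case (3), $G=E_6$, one has $k\leq 3$, while the smallest non-trivial $E_6$-module has dimension $27$, so trivially $k<\dim V/2$. In case (2), the modules listed in Table~1 are finite in number, and the explicit description gives $n-2\leq m_s(V)\leq n+1$ together with $\dim V$ quadratic in $n$, from which $\dim V>2(n+1)$ follows by direct inspection. The main work is in case (1), $G=A_n$ with $k\leq n+1$ (forcing $n\geq 2$), where one must verify $\dim V>2(n+1)$. Using that by \cite{TZ21} all non-zero weights of $V$ have multiplicity one, $\dim V$ equals the number of distinct non-zero weights plus the zero-weight multiplicity; bounding the former by the Weyl-group orbit of the highest weight and handling tensor-decomposable $V$ (as in Example \ref{e2x}) via Steinberg's theorem yields the required inequality.

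The main obstacle lies in case (1) for small $n$, especially $n=2,3$, where the gap between $n+1$ and $2(n+1)$ is narrow and the candidate highest weights producing modules of dimension close to $2(n+1)$ must be examined one by one. For larger $n$, the restricted list of highest weights compatible with the all-nonzero-weight-multiplicity-one condition forces $\dim V$ to grow much faster than $2n$, so the bound is comfortable and follows from the Weyl dimension formula.
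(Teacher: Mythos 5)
Your overall strategy is the paper's: reduce everything to Theorem \ref{ab9}, dispose of the alternative $m_s(V)\leq 2$ by classifying the \ir modules of dimension $3$ and $4$, and treat the three items of that theorem separately. The treatment of $E_6$ (item (3)) and of Table 1 (item (2)) is fine as you describe it, and your enumeration of the $A_1$-exceptions in dimensions $3$ and $4$ matches the paper's (a minor omission: in dimension $4$ one must also dispose of the spin module of $B_2$ and, for $p=2$, of $V_{\om_2}$ for $C_2$, which are not literally the natural $C_2$-module; the paper does this in Remark \ref{bc4}).

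The genuine gap is in item (1), $G=A_n$. You propose to finish by proving $\dim V>2(n+1)$, but that inequality fails as a strict inequality exactly at the borderline modules that matter: for $G=A_2$, $p\neq 2$, the module $V_{2\om_1}$ has $\dim V=6=2(n+1)$, and for $G=A_4$ the module $V_{\om_2}$ has $\dim V=10=2(n+1)$. In both cases the only multiplicity bound you invoke, $m_s(V)\leq n+1$ from Theorem \ref{ab9}(1), yields $m_s(V)\leq \dim V/2$ rather than $m_s(V)<\dim V/2$, so no dimension count can close these cases; one needs module-specific bounds sharper than $n+1$. This is precisely what the paper supplies: Lemma \ref{we4} gives $m_s(V_{2\om_1})\leq (n+3)/2$, hence $\leq 2$ for $n=2$; Lemma \ref{ep9} gives $m_s(V_{\om_2})\leq 2$ for $n=4$; and $V_{\om_2}$ for $A_3$ is a twist of the natural $D_3$-module, where Lemma \ref{co1} applies. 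You correctly flag that small $n$ requires a case-by-case examination (the paper carries out the enumeration of modules with $\dim V\leq 2(n+1)$ via L\"ubeck's tables), but the missing idea is that this examination must produce eigenvalue-multiplicity bounds strictly below $\dim V/2$ for the individual modules, not merely dimension estimates.
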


The fact that the \mult $m_s(V)$ in question is of certain significance can be observed from  \cite{GPPS}, in which 
the authors study \ir subgroups $X$ of $GL_n(q)=GL(V)$,   containing a semisimple element $g$ of prime order  acting irreducibly on $(\Id -g)V$. They assume that $\dim (\Id -g)V> \dim V/2$, equivalently, $m_s(V)=\dim V -\dim (\Id -g)V< \dim V/2$, and we complement this result by dropping the latter assumption for $X$  a quasi-simple group of Lie type 
in defining characteristic $p|q$. More precisely, we use Corollary \ref{hv2} to prove the \f result.   In the statement below $\overline{V}=V\otimes F$, where $F$ is an algebraically closed field  containing $\FF_q$.

\begin{theo}\label{hv3} 
Let G be a quasi-simple  group of Lie type in defining characteristic $p>0$,  
and let V be  a non-trivial  \ir$\FF_q G$-module where $p|q$. 
Let $s\in G$ be a regular semisimple element.  Suppose that $\dim V>2$ and  at most one \ei \mult of $s$   on $\overline{V}$ exceeds $1$. Then 
$m_s(\overline{V})< \dim V/2$, unless $G=SL_2(q')$ or $PSL_2(q')$ for some p-power $q'$, and either  $m_s(\overline{V})= \dim V/2$ is even or $\dim V=3,p\neq  2$ and $s^2\in Z(G)$.

Conversely, if V is a vector space over $\FF_q$ and $\dim V\equiv 0\pmod 4$ or $p\neq  2$ and   $\dim V=3$ then there exists
a group $G\cong SL_2(q')$, a semisimple element $s\in G$ and \irr $\rho:SL_2(q')\ra GL(V) $ such that at most one \ei \mult of $\rho(s)$ on $\overline{V}$ exceeds $1$   and $m_s(\overline{V})=\max\{\dim V/2, 2\}$. \end{theo}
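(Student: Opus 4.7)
The plan is to reduce the forward direction to Corollary \ref{hv2} by passing from the finite group $G$ to its ambient algebraic group via standard results on representations in defining characteristic, and then to verify the converse by explicit constructions in type $A_1$.

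\med

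\emph{Forward direction.} Let $\mathbf{G}$ be the simply connected simple algebraic group of the same Lie type as $G$, defined over $F=\overline{\FF_p}$, so that $G$ is a central quotient of the fixed-point subgroup of a Steinberg endomorphism of $\mathbf{G}$. A standard result (going back to Steinberg) identifies $\overline V=V\otimes_{\FF_q}F$ with the restriction to $G$ of an irreducible rational $\mathbf{G}$-module $\tilde V$ of the same dimension; the actions of $s\in G\subset\mathbf{G}$ on $\tilde V$ and on $\overline V$ agree. Since $s$ is regular semisimple in $G$ it is also regular semisimple in $\mathbf{G}$, and the multiplicity hypothesis on $\overline V$ transfers verbatim to $\tilde V$. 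Corollary \ref{hv2} then yields $m_s(\overline V)<\dim V/2$, unless $\mathbf{G}$ has type $A_1$ (so $G\cong SL_2(q')$ or $PSL_2(q')$ for some $p$-power $q'$) and $\dim V\in\{3,4\}$, which we treat separately.

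\med

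\emph{Type $A_1$.} Write $s=\diag(t,t^{-1})$ with $t\in F^{\times}$, $t^2\neq 1$. If $\dim V=3$ then $p\neq 2$ and $\overline V$ is a Frobenius twist of $V_{2\omega_1}$; the eigenvalues of $s$ are $t^2,1,t^{-2}$, which coincide only when $t^2=-1$, equivalently $s^2\in Z(G)$, giving $m_s(\overline V)=2$. If $\dim V=4$ then $\overline V$ is, up to twist, either $V_{3\omega_1}$ with eigenvalues $t^{\pm 1},t^{\pm 3}$, or $V_{\omega_1}\otimes V_{\omega_1}^{(p^j)}$ with eigenvalues $t^{\pm 1\pm p^j}$. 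In each case a direct inspection of the possible exponent coincidences, subject to at most one eigenvalue multiplicity exceeding $1$, shows that the only way to attain $m_s(\overline V)=\dim V/2$ is with the repeated eigenvalue of multiplicity exactly $2$. This matches the clause ``$m_s(\overline V)=\dim V/2$ is even''.

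\med

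\emph{Converse.} For $\dim V=3$ with $p\neq 2$: pick any $p$-power $q'$ with $4\mid q'^2-1$, take $G=SL_2(q')$ acting on $V\cong V_{2\omega_1}$ realized over $\FF_q$, and let $s\in G$ have order $4$; then $s^2=-I\in Z(G)$ and $s$ acts on $\overline V$ with eigenvalues $-1,1,-1$, giving $m_s(\overline V)=2=\max\{\dim V/2,2\}$. For $\dim V\equiv 0\pmod 4$: construct $\overline V$ as an irreducible tensor product of Frobenius twists, using the building block $V_{\omega_1}\otimes V_{\omega_1}^{(p^j)}$ together with further restricted and twisted factors bringing the total dimension to $\dim V$, and choose $s$ of order $2(p^j+1)$ in a suitable $SL_2(q')$. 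On the building block the forced collision $t^{\pm(1+p^j)}=-1$ creates a doubled eigenvalue; the remaining factors are tuned, by a careful choice of $j$ and $q'$, so that this collision propagates to a single eigenvalue of multiplicity $\dim V/2$ on $\overline V$ while all other eigenvalues remain pairwise distinct.

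\med

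The main technical obstacle lies in this last step of the converse: an uncontrolled choice of tensor factors easily introduces further coincidences and violates the ``at most one repeated eigenvalue'' hypothesis. Resolving it requires delicate arithmetic control of the exponents $\pm 1\pm p^{j_1}\pm\cdots\pm p^{j_k}$ modulo the order of $s$, together with an existence statement placing $s$ and $\overline V$ inside an $SL_2(q')$ compatible with the ground field $\FF_q$.
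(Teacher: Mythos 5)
There is a genuine gap, and it sits at the heart of both directions: you implicitly assume that $\overline{V}=V\otimes_{\FF_q}F$ is an irreducible $FG$-module, i.e.\ that $V$ is absolutely irreducible. The ``standard result going back to Steinberg'' extends absolutely irreducible $FG$-modules to the ambient algebraic group; it does not apply to an irreducible $\FF_qG$-module that fails to be absolutely irreducible. In that case $\overline{V}$ decomposes as a direct sum $\overline{V}_1\oplus\cdots\oplus\overline{V}_r$ of Galois-conjugate absolutely irreducible summands (see \cite[Lemma 2.10.2]{KL}), and this is precisely where the exceptional configuration $m_s(\overline{V})=\dim V/2=2r$ with $\dim V=4r$, $r>1$, comes from: each $4$-dimensional summand contributes multiplicity $2$ to the same eigenvalue. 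Your forward argument never sees this case and would wrongly conclude that the only exceptions have $\dim V\in\{3,4\}$, contradicting the statement being proved (which allows any $\dim V\equiv 0\pmod 4$). The paper's proof routes the forward direction through Lemma \ref{pc1}, whose part (ii) is devoted exactly to the non-absolutely-irreducible case.

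The same oversight makes your converse construction for $4\mid\dim V$, $\dim V>4$, unworkable rather than merely delicate. Any absolutely irreducible rational $SL_2(F)$-module of dimension greater than $4$ satisfies $m_s(\overline{V})<\dim V/2$ by Corollary \ref{hv2} --- the very result you invoke in the forward direction --- so no amount of arithmetic tuning of the exponents $\pm1\pm p^{j_1}\pm\cdots\pm p^{j_k}$ in a tensor product of Frobenius twists can produce $m_s(\overline{V})=\dim V/2$ there. The examples must be built by field descent, not by tensoring: one takes the $4$-dimensional module with highest weight $(1+p^r)\om_1$ for $PSL_2(q^{2r})$, realizes it inside $GL_4(q^r)$, and then views $GL_4(q^r)\subset GL_{4r}(q)$ by regarding $\FF_{q^r}$ as an $\FF_q$-space; the resulting $4r$-dimensional $\FF_qG$-module is irreducible but not absolutely irreducible, and the chosen element of order dividing $q^r+1$ has eigenvalue $1$ with multiplicity $2r$ (this is the paper's Lemma \ref{ev3}, resting on Lemmas \ref{s1t}, \ref{s2s} and \ref{es3}). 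Your $\dim V=3$ construction and your analysis of the two $4$-dimensional $A_1$-modules are essentially correct and agree with the paper, but the cases $\dim V=4r$ with $r>1$ require the descent mechanism in both directions.
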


In Theorem \ref{hv3}   saying that an element $g\in G$ is regular  means that $g$ is regular as an element of a simple algebraic group $\mathbf{G}$ from which $G$ is obtained as the fixed point set of a Steinberg (Frobenius) endomorphism  of $\mathbf{G}$, see \cite[\S 21.1]{MT}. For a semisimple element $g\in G$ this is equivalent to saying that $C_G(g)$ has no non-identity unipotent element.  If $g\in G$ is semisimple and  not regular then we have a result similar to that of \cite{GPPS}. To state it, it is convenient to introduce a notion of a nearly natural module of a classical group as follows.

Let $G$ be a quasi-simple  group of classical Lie type in defining characteristic $p>0$. Let $V$ be an \ir $\FF_qG$-module for some $p$-power $q$ and $F$ an algebraically closed field containing  $\FF_q$, and 
$\overline{V}=V\otimes F$. Let $G=\mathbf{G}^\si=\{g\in \mathbf{G}:\si (g)=g\}$, where $\si$ is a Steinberg endomorphism of $\mathbf{G}$. We say that $V$ is {\it nearly natural} if the composition factor dimensions of $G $ on $\overline{V} $ are the same as the dimension of the natural $\mathbf{G}$-module. 
(See {\it Notation} for the formal definition of the natural module.) 
This notion unfortunately depends on the Lie type of $\mathbf{G}$. 
For instance, the dimensions of the natural module for $A_3\cong D_3$
are 4 and 6, respectively, and these for $B_2\cong C_2$ with $p\neq 2$ are 5,4, respectively.

The following statement is a special case of the main theorem of \cite{GPPS}. 

\begin{theo}\label{gpps} Let   $G$ be a quasi-simple  group of  Lie type in defining characteristic $p>0$ and $q$ a $p$-power.  Let $s\in G$ be a  non-central  semisimple element and  let $\rho:G\ra GL_n(q)$, $n>9$, be an \irr such that $\rho(g)$ is similar to $\diag(\Id_m,M)$, where  $M\in GL_{n-m}(q)$ is \irt Suppose that $m<n/2$. Then G is classical and $\rho$ is 
 nearly natural.  
\end{theo}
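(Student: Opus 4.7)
The plan is to deduce the theorem as a special case of the main classification in \cite{GPPS}. Set $X := \rho(G) \leq \GL(V) = \GL_n(q)$; by hypothesis $X$ is irreducible and $n > 9$. The structural assumption that $\rho(s)$ is similar to $\diag(\Id_m, M)$ with $M \in \GL_{n-m}(q)$ irreducible is exactly the geometric setup of \cite{GPPS}: the element $\rho(s)$ fixes an $m$-dimensional subspace and acts irreducibly on a complementary subspace of dimension $n-m > n/2$.

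Since \cite{GPPS} is framed for semisimple elements of \emph{prime} order, the first step is to replace $\rho(s)$ by a suitable power of prime order retaining the same structural shape. Because $M$ acts irreducibly on $\FF_q^{n-m}$, the cyclic subgroup $\langle M\rangle$ embeds into $\FF_{q^{n-m}}^*$, and a primitive prime divisor $r$ of $q^{n-m}-1$ (which exists by Zsygmondy's theorem, the only potentially excluded pair $(q,n-m) = (2,6)$ being handled by a direct ad hoc argument) divides $|M|$. The element $M_0 := M^{|M|/r}$ then has prime order $r$ and still acts irreducibly on $\FF_q^{n-m}$, since the unique order-$r$ cyclic subgroup of $\FF_{q^{n-m}}^*$ is not contained in any proper subfield. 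The corresponding power $s_0$ of $s$ satisfies $\rho(s_0) \sim \diag(\Id_m, M_0)$, matching the hypothesis of \cite{GPPS} (and $s_0$ is non-central in $G$ since $M_0$ is non-scalar).

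I then invoke the main theorem of \cite{GPPS}, which classifies irreducible subgroups $X \leq \GL_n(q)$ containing a prime-order semisimple element of the above shape with $m < n/2$. The resulting list splits into classical groups acting nearly naturally and a bounded collection of exceptional examples. Restricting to $X$ quasi-simple of Lie type in defining characteristic $p \mid q$, the exceptional entries either force $n \leq 9$ (excluded by hypothesis), or correspond to a representation in non-defining characteristic (excluded), or contradict the irreducibility of $\rho(s_0)$ on the complement. What remains is precisely the nearly natural representations of classical groups, in the sense that the $G$-composition factor dimensions of $\overline{V}$ agree with the dimension of the natural $\mathbf{G}$-module. This is exactly the stated conclusion.

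The principal obstacle is the detailed bookkeeping of the \cite{GPPS} list: each entry must be checked against the conditions of defining characteristic, irreducibility of $\rho(s_0)$ on the complement of its fixed subspace, and $n > 9$; and the handful of Zsygmondy-exceptional pairs arising in the prime-order reduction must be handled by separate direct arguments. This is technical but routine; the conceptual content is fully contained in \cite{GPPS}.
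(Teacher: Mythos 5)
The paper offers no proof of this statement at all: it is quoted verbatim as ``a special case of the main theorem of \cite{GPPS}'', with the $n\leq 9$ cases referred to \cite[Table 6]{GPPS} and, later in Section 5, a remark that the specialization to quasi-simple groups in defining characteristic lands in Examples 2.1, 2.4 (nearly natural) and 2.9 (only $n\leq 9$) of \cite{GPPS}. So your overall strategy --- reduce to the hypotheses of \cite{GPPS} and then sift its list --- is exactly the intended one.

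However, your bridge from the stated hypothesis to the prime-order hypothesis of \cite{GPPS} contains a false step. You claim that because $\langle M\rangle$ embeds in $\FF_{q^{n-m}}^{*}$, a primitive prime divisor $r$ of $q^{n-m}-1$ must divide $|M|$. That is not true: irreducibility of $\langle M\rangle$ on $\FF_q^{n-m}$ only says that the corresponding $\zeta\in\FF_{q^{n-m}}^{*}$ generates $\FF_{q^{n-m}}$ over $\FF_q$, i.e.\ that the multiplicative order of $q$ modulo $|M|$ equals $n-m$; it does not force $|M|$ to be divisible by any particular prime. For instance, with $q=2$ and $n-m=10$ the primitive prime divisor of $2^{10}-1=3\cdot 11\cdot 31$ is $11$, yet an element of order $93=3\cdot 31$ already generates $\FF_{2^{10}}$ (since $\mathrm{lcm}(\mathrm{ord}_3(2),\mathrm{ord}_{31}(2))=\mathrm{lcm}(2,5)=10$) and so gives an irreducible $M$ with $11\nmid |M|$. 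Worse, for such an $M$ \emph{no} prime-order power of $M$ acts irreducibly on $\FF_q^{n-m}$ (the powers of orders $3$ and $31$ have constituents of dimensions $2$ and $5$), so the element $s_0$ you need may simply not exist inside $\langle s\rangle$. Either you must restrict attention to the case where $|M|$ is divisible by a primitive prime divisor of $q^{n-m}-1$ (which is the actual hypothesis of \cite{GPPS}), or you must argue separately --- using that $G$ is quasi-simple of Lie type in defining characteristic, not just the shape of $\rho(s)$ --- that the remaining orders cannot occur or that $\rho(G)$ nevertheless contains a suitable ppd-element. As written, the reduction does not go through, and this is precisely the point at which your argument adds something beyond the paper's bare citation and gets it wrong.
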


    Note that the cases with $n\leq 9$ are listed in \cite[Table 6]{GPPS}.

 Theorem  \ref{hv3} deals with regular elements  $g\in G$. The case where $g$ is not regular follows from Theorem \ref{c99} obtained in \cite{TZ21}:

\begin{theo}\label{nr2} Let G   be  a quasi-simple group of Lie type in defining characteristic $p$ and $g\in G$ a non-central semisimple element. Let $\rho:G\ra GL_n(q)$ be an \ir \rept Suppose that g is not regular and  at most one \ei \mult of $\rho(g)$ exceeds $1$. Then $\rho$ is nearly natural for $G$  
or for a group $G_1\cong G$.  (Here $G_1=Spin^+_6(q)$ if $G=SL_4(q)$; $G_1=Spin^-_6(q)$ if $G=SU_4(q)$;   $G_1=C_2(q)$ if $G=B_2(q)$;  $G_1=B_2(q)$ if $G=C_2(q)$.) 
\end{theo}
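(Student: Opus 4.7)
The plan is to deduce Theorem \ref{nr2} from Theorem \ref{c99} by lifting the problem to the ambient simple algebraic group. Write $G=\mathbf{G}^\sigma$ with $\mathbf{G}$ simple and simply connected over $F=\overline{\FF}_q$ and $\sigma$ a Steinberg endomorphism; by the convention of the paper, the hypothesis that $g\in G$ is non-regular means that $g$ is non-regular semisimple in $\mathbf{G}$, and non-centrality in $G$ forces non-centrality in $\mathbf{G}$. Form $\overline V=V\otimes_{\FF_q}F$. Since $V$ is irreducible over $\FF_q$, the $FG$-module $\overline V$ is semisimple and decomposes as a direct sum of Galois conjugates of a single irreducible $FG$-module $W$; in particular all composition factors of $\overline V$ have the same dimension.

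Because $\rho(g)$ acts semisimply on $\overline V$, its eigenspaces restrict to give the eigenspaces on each summand, so the hypothesis that at most one eigenvalue multiplicity of $\rho(g)$ on $\overline V$ exceeds $1$ passes to $W$. By Steinberg's tensor product theorem, $W$ is the restriction to $G$ of an irreducible rational $\mathbf{G}$-module $\widetilde W$. Applying Theorem \ref{c99} to the triple $(\mathbf{G},g,\widetilde W)$ then forces $\widetilde W$ to be a twist of the natural $\mathbf{G}$-module. Since Frobenius and surjective algebraic endomorphisms preserve dimension, $\dim W=\dim\widetilde W$ equals the dimension of the natural $\mathbf{G}$-module, which by the definition recalled in the excerpt is precisely the content of ``nearly natural for $G$''.

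The alternative $G_1\cong G$ in the conclusion reflects the well-known coincidences of simply connected simple algebraic groups of small rank, namely $A_3\cong D_3$ (which gives $SL_4\cong \mathrm{Spin}_6^+$ and the twisted analogue $SU_4\cong \mathrm{Spin}_6^-$) and $B_2\cong C_2$ (which gives $\mathrm{Spin}_5\cong\mathrm{Sp}_4$ for $p\neq 2$, with the $p=2$ case absorbed via the exceptional isogeny). The two Dynkin labellings attach different natural modules to the same abstract algebraic group (of dimensions $4$ and $6$ in the $A_3\cong D_3$ case, and $5$ and $4$ in the $B_2\cong C_2$ case); depending on which labelling the module $\widetilde W$ matches, Theorem \ref{c99} outputs the natural module of $\mathbf{G}$ or that of $\mathbf{G}_1$, producing the explicit list in the parenthetical remark.

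The main obstacle is the bookkeeping around the lift: one must carefully verify that the eigenvalue multiplicity hypothesis is preserved when passing from $\rho(g)$ on $\overline V$ to each composition factor and then to the rational lift $\widetilde W$ (the non-regularity of $g$ in $\mathbf{G}$ is essential here, so that one stays inside the hypothesis of Theorem \ref{c99} rather than being thrown to the generic regular case treated elsewhere in the paper), and one must match the notion of ``twist'' in Theorem \ref{c99} with the operations of restriction of scalars and Galois conjugation on the finite-group side. A secondary subtlety is the exceptional-isomorphism check, which requires scanning the short list of coincidences between simply connected simple algebraic groups of rank at most $3$ in order to produce the precise list of $G_1$ appearing in the statement.
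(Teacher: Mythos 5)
Your overall route -- decompose $\overline V$ into Galois conjugate absolutely irreducible summands, lift each to a rational module for the ambient simply connected group $\mathbf{G}$, observe that the almost cyclic condition passes to each summand, and then invoke Theorem \ref{c99} to pin down the composition factor dimensions -- is exactly the skeleton of the paper's proof (the paper cites \cite[Lemma 2.10.2]{KL} for the decomposition and Steinberg's restriction theorem for the lift; your attribution to the tensor product theorem is a minor slip). The paper, however, does not apply Theorem \ref{c99} directly: it routes the argument through Theorem \ref{gg3}, which converts the algebraic-group conclusion into a list of the possible finite groups $G=\mathbf{G}^{\si}$, and this is where your proposal has a genuine gap.

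The gap is the treatment of the twisted forms, above all ${}^3D_4(q)$. Theorem \ref{c99} is a statement about the algebraic group: for $\mathbf{G}=\mathrm{Spin}_8(F)$ it permits an $8$-dimensional module, and the classification of Steinberg endomorphisms puts ${}^3D_4(q)$ (as well as ${}^2C_2(q)$) on the list of candidate fixed-point groups alongside $\mathrm{Spin}^{\pm}_{2n}(q)$ etc. Since ${}^3D_4(q)$ is not a classical group, it is not covered by the conclusion ``nearly natural for $G$ or for one of the listed $G_1\cong G$'', so it must be shown that the hypotheses of the theorem cannot be realized there. The paper does this in Lemma \ref{d43}(2): a non-central semisimple element of ${}^3D_4(q)$ that is almost cyclic on an $8$-dimensional module is necessarily regular. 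That lemma is the substantive new work behind Theorem \ref{nr2} -- it uses the structure of $C_{\mathbf{G}}(s)$, the Liebeck--Saxl--Seitz classification of maximal rank subgroups of ${}^3D_4(q)$, and a careful analysis of how $\si$ permutes the $SL_2$-factors of $\mathrm{Spin}_4\circ \mathrm{Spin}_4$ -- and nothing in your proposal replaces it. (The Suzuki groups ${}^2C_2(q)$ are dismissed more cheaply, since they have no non-regular non-central semisimple elements, but this too should be said.) Your discussion of the exceptional isomorphisms $A_3\cong D_3$ and $B_2\cong C_2$ correctly accounts for the list of $G_1$ in the parenthetical remark; it is only the twisted-group step that is missing.
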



\section{ Notation} The cardinality of a finite set $A$ is denoted by $|A|$.
The ring of integers is denoted by $\ZZ$, and by $\ZZ^+$ the subset of non-negative integers. If $0\neq a,b$ are natural numbers then we write $a|b$ to say that $b$ is a multiple of $a$ and we denote by $(a,b)$ the greatest common divisor of $a,b$.    We fix $F$ for an algebraically closed field of characteristic $p\geq 0$; the multiplicative group of $F$ is denoted by $F^\times$. The field of complex numbers is denoted by $\CC$. We write $Z(X)$ for  the center of a group $X$.
For $x\in X$ the group generated by $x$ is denoted by $\lan x \ran$, by $C_X(x)$ the centralizer of $x$ in $X$ and by $|x|$ the order of $x$. A diagonal $(n\times n)$-matrix with sequent diagonal entries $d_1\ld d_n$ is denoted by $\diag(d_1\ld d_n)$.

For our convenience we call an element $g\in GL_n(F)$ {\it cyclic} if $g$ is diagonalizable and all \eis of $g$
are of \mult 1. This is equivalent to saying that the characteristic \po of $g$ has no multiple roots. We call  $g$ {\it almost cyclic} if $g$ is diagonalizable and at most one of the \eis of $g$
is of \mult greater than 1. 
If $g\in GL_n(F_1)$, where $F_1$ is a subfield of $F$ then saying that $g$ is almost cyclic
means that $g$ is almost cyclic in $GL_n(F)$.
(See \cite{DZ1} and \cite{DZ3} for more comments on these notions.)

In Section 3  $G$ is a non-trivial  simply connected simple linear algebraic group  over $F$ 
 (unless otherwise explicitly stated); sometimes we write $G(F)$ to make it precise. (In Section 5 we use $\mathbf{G}$ for a simple algebraic group and $G$ for a finite group of Lie type.)
 All $G$-modules considered are rational finite-dimensional $FG$-modules. For a $G$-module $V$ (or a rational \rep $\rho$ of $G$)
we write $V\in\Irr(G)$ (or $\rho\in\Irr(G)$) to mean that $V$ (or $\rho$)  is irreducible. If $H$ is a subgroup of $G$ then $V|_H$ means the restriction of a $G$-module $V$ to $H$.

We fix a maximal torus $T$ in $G$ and a Borel subgroup containing $T$, which in turn define the roots and simple roots of $G$, as well as the weights of
$G$-modules and representations. The set $\Omega={\rm Hom} (T,F^\times)$ (the rational homomorphisms of $T$ to  $F^\times$)
is called the {\it weight lattice}, which is a free $\ZZ$-module of finite rank
called {\it the rank of} $G$. The elements of $\Omega$ are called {\it $T$-weights of} $G$. As $T$ is fixed we write "weights" in place of  $T$-weights.
For $t\in T$ and $\mu\in \Om(V)$, $\mu(t)$ is an \ei of $t$ on $V$; then $(-\mu)(t)=t\up$
and $(\mu+\nu)(t)=\mu(t)\nu(t)$ for $\nu\in\Om(V)$.

The $T$-weights of a $G$-module $V$ are the \ir constituents of the restriction of $V$ to $T$.
When $T$ is fixed (which we assume throughout the paper), we omit the indication to $T$ and write "weights" in place of "$T$-weights". The set of weights of $V$ is denoted by $\Omega(V)$. If $\mu$ is a weight of $V$, the $\mu$-weight space
is $\{v\in V: tv=\mu(t)v$ for all $t\in T\}$; the dimension of the $\mu$-weight space is called the {\it \mult of $\mu$
on} $V$.  The Weyl group of $G$ is denoted by $W(G)$; in fact $W(G)=N_G(T)/T$ so the conjugation  action of $N_G(T)$
on $T$ yields an  action of $W(G)$ on $T$ and on the $T$-weights. The $W(G)$-orbit of $\mu\in\Om$ is denoted by $W\mu$.
 
As every semisimple element of $G$ is conjugate to an element of $T$, in the most of our results the assumption $g\in G$ semisimple is equivalent to saying $g\in T$, the latter is  more rigorous when we deal with $\mu(g)$ for a weight $\mu$.

We use $G_{reg}$ to denote the set of semisimple regular elements  of $G$, in particular, $T_{reg}$ is the set of regular elements  of $T$. The semisimple regular elements  $g$ of $G$ are exactly those centralizing  no non-identity unipotent element  (Lemma 3.8), and see \cite[Introduction]{TZ22} for more comments on such elements and their properties.

With an algebraic group is associated the Lie algebra of $G$ denoted here by $L(G)$.  The group
$G$  acts on $L(G)$ in the natural way making it a $G$-module. If $G$ is simple (or reductive)
then the non-zero weights of $T$ on $L(G)$ are called {\it roots}. We denote the set of roots by $\Phi$ or $\Phi(G)$. The
$\ZZ$-span of $\Phi$ is called the {\it root lattice} and denoted by $R$ or $R(G)$. In $\Phi(G)$ one
can choose a $\ZZ$-basis  $\Pi$ of $R$ whose elements  are denoted by $\al_1\ld \al_n$ and called {\it simple roots} (here $n$ is the rank of $G$). The ordering of simple roots and the Dynkin diagrams of $G$ are as in \cite{Bo}.
 The weights in $R$ are called {\it radical}.

One defines a non-degenerate symmetric bilinear form on $\Omega$, which we express as $(\mu,\nu)$.
The elements $\om_i$  satisfying $(\om_j,\al_i)=2(\al_i,\al_i)\delta_{ij}$ (the Kronecker delta) belong to $\Omega$
and called {\it fundamental weights} \cite[Ch. VI,\S 1, no.10]{Bo}. These form a $\ZZ$-basis of $\Omega$ so every $\nu\in \Omega$ can be expressed in the form $\sum a_i\om_i$ with $a_1\ld a_n\in \ZZ;$ the set of  $\nu$ with $a_1\ld a_n\geq 0$ is denoted by $\Omega^+$.
The weights of $\Omega^+$ are called {\it dominant}. Suppose that $p>0$; a (dominant) weight $(a_1\ld a_n)$ is called $p$-{\it restricted} if $0\leq a_i<p$ for all $i=1\ld n$; the set of them is denoted by $\Omega^+_p$. For uniformity,
we often do not separate the cases with $p=0$ and $p>0$; by convention, a $p$-restricted weight is simply a dominant weight when  $p=0$. An \ir $G$-module is called $p$-{\it restricted} if so is the highest weight of it.

The weight with  $a_1=\cdots =a_n=0$ is denoted by $\om_0$ and called the {\it zero weight} (also the weight 0).
We often simplify $\mu\neq \om_0$ by writing $\mu\neq 0$.
 We can (and sometimes do) write the elements of $\Omega$ as $(a_1\ld a_n)$
and in some cases, especially when we compare \reps of $G(\CC)$ with $G(F)$,  saying that weights of $G(\CC)$ and $G(F)$
are the same means the coincidence of the set of strings $(a_1\ld a_n)$ for these groups.
We set $\Omega^+(V)=\Omega^+\cap \Omega(V)$, so $\Omega^+(V)$ is the set of dominant weights of $V$.

There is a standard partial ordering of elements  of $\Omega$; specifically, for $\mu,\mu'\in\Omega$ we write
$\mu\preceq\mu'$ and $\mu'\succeq \mu$ \ii $\mu'-\mu\in R^+$. (Sometimes we write $\mu\prec \mu'$ and $\mu'\succ \mu$ to indicate that $\mu\neq \mu'$.)
Every  non-trivial \ir $G$-module has a weight $\om$ such that $\mu\prec\om$ for every $\mu\in\Omega(V)$ with $\mu\neq \om$.
This is called the {\it highest weight of} $V$. There is a bijection between $\Omega^+$ and $\Irr (G)$,
so for $\om\in\Omega^+$ we denote by $V_\om$ the \ir $G$-module with highest weight $\om$.
The zero weight space of $V$ is sometimes denoted by $V^T$. In general, for $S\subseteq G$ we set $V^S=\{v\in V:
sv=v$ for all $s\in S\}$.   $L(G)$  is a $G$-module called {\it adjoint}; it is \ir if $p=0$; if $p>0$ it may be reducible but has a \hw denoted here by $\om_a$. 
 We denote by  $V_a$ the composition factor of $L(G)$ with \hw   $\om_a$.

Our notation for simple algebraic groups is standard, in particular those of types $A_n,B_n,C_n,D_n$ are called {\it classical}, $E_6,E_7,E_8,F_4,G_2$ are called {\it exceptional}, the subscript is the rank of the group.  Here $n\geq 1$ for $A_n$, and we usually assume $n>1$ for $B_n,C_n$, and $n>3$ for $D_n$, due to isomorphisms $D_3\cong A_3,$ $D_2\cong A_1\times A_1$ and $A_1\cong B_1\cong C_1$. For brevity we write $G=A_n$ to say that $G$ is a simple simply connected algebraic group of type $A_n$, and similarly for the other types. There is also an isomorphism $B_2\cong C_2$, and we usually choose $C_2$ for the standard representative of this algebraic group. (In Section 4 we allow to view $A_3$ as $D_3$ for uniformity of some statements.) If $p=2$ then for
every $n$ there  are surjective algebraic group \hos $B_n\ra C_n$ and $C_n\ra B_n$ with trivial kernel (so these  are abstract group isomorphisms); for our purpose the choice between these two groups is not essential, so we choose to work with $C_n$.

For an algebraic group $G$ of classical type the module (and the representation) with highest weight $\om_1$ is called {\it natural}. (We keep this definition even for types $B_n$ with $p=2$.)

In our reasonings we regularly use so called "Bourbaki weights", which are elements of a $\ZZ$-lattice containing $\Omega(G)$ with basis   $\ep_1,\ep_2,...$;  the explicit expressions of the fundamental weights and the simple roots of $G$ in terms of $\ep_i$'s is given in \cite[Planches 1 -- XIII]{Bo}.

 If $\si: G\ra G$ is a surjective algebraic group homomorphism  and $\phi$ is a representation of $G$ then the $\si$-twist $\phi^\si$ of $\phi$ is defined as the mapping $g\ra \phi(\si (g))$ for $g\in G$. Of fundamental importance
 is the Frobenius mapping $Fr: G\ra G$ arising from the mapping $x\ra x^p$ $(x\in F)$ when $p>0$. If $V$
 is a  $G$-module then the modules $V^{Fr^k}$  ($k\geq 1$ an integer) are called {\it Frobenius twists of} $V$; if $V$ is \ir with highest weight $\om$ then the highest weight of
 $V^{Fr^k}$ is $p^k\om$ for some integer $k\geq 0$. 

Every $\om=\sum a_i\om_i\in\Omega^+$ has a unique $p$-adic expansion $\om=\lam_0+p\lam_1+\cdots +p^k\lam _k$ for some integer $k\geq 0$, where $\lam_0\ld \lam_k\in\Omega_p^+$ are $p$-restricted dominant weights. By Steinberg's tensor product theorem \cite[Theorem 41]{St}, if $G$ is simply connected then    $V_\om=V_{\lam_0}\otimes V_{\lam_1}^{Fr}\otimes\cdots \otimes V_{\lam_k}^{Fr^k}$, which we refer as the Steinberg expansion of $V$.

A surjective algebraic group homomorphism $\si: G\ra G$ is   called {\it  Steinberg} if the subgroup $G^\si=\{g\in G: \si(g)=g\}$ is finite. Equivalently, $\si$ is Steinberg \ii $\si^m=Fr^k$ for some integers $m,k>0$, see \cite[p. 183]{MT}.
If $G$ is  semisimple and $\si$ is Steinberg then $G^\si$ is called a finite group of Lie type.

Let $G$ be an algebraic group, $V$ a $G$-module and let  $T$ be a maximal torus of $G$.  We say that $s\in T$  {\it separates a weight $\om$ of}  $V$ if the multiplicity of the \ei $\om(s)$ equals the \mult of $\om$.  In addition, we say that  $s$  {\it separates the weights  of}  $V$ if every eigenspace of  $s$ on $V$  is a weight space.  It is well known that $s$ is regular \ii it separates weight zero of the adjoint module. If $s$ separates all weights of the adjoint module, we say that $s$ is {\it strictly regular}.

The set of \eis of an element $g\in G$ on a $G$-module $V$ (disregarding the multiplicities)  is denoted by $E_V(g)$. The notation $m_s(V)$ is introduced in Theorem 1.2.

\section{Preliminaries}\label{prelims}

In this section $G$ is a simple simply connected linear algebraic group   over an algebraically closed field   of characteristic $p\geq 0$. For our purposes we can assume that $G=G(F)$, the group of $F$-points of $G$, where $F$ is an algebraically closed field of characteristic $p$.

Recall that a diagonalizable matrix is called \emph{cyclic} if all of its eigenspaces are one-dimensional, and \emph{almost cyclic}
if at most one of its eigenspaces has dimension greater than $1$.

In Lemma \ref{tt4} we denote by ${}^tM$  the transpose of a matrix $M$.

\begin{lemma}\label{tt4} 
  
$(1)$ Let $M\in M_{r\times r}(F)$ be an almost cyclic diagonal matrix such that $M$ is conjugate to ${}^tM^{-1}$ and let
 $  e\in F$ be an eigenvalue of $M$. If $e\notin\{ \pm1\}$, then the \mult of $e$ equals $1$.
  
$(2)$ Let $G$ be a semisimple algebraic group, $s\in G$ a semisimple element, and V  a G-module.    Suppose that $V$ is self-dual and  $s $ is  almost cyclic on $V$.    Then all eigenvalues of $s$ on $V$, not equal to $1$ or $-1$, occur with multiplicity $1$.
   In particular, this holds  if $V$ is irreducible and $G$
 is   simple of type $A_1, B_n, C_n, D_n$ ($n$ even), $E_n$, $F_4$ or $G_2$.\end{lemma}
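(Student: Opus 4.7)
\medskip

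The plan is that part (1) is a direct bookkeeping argument from the definition of almost cyclic together with the duality hypothesis, and part (2) reduces to part (1) by passing to a weight basis of a maximal torus and using the self-duality of $V$ to see that the multiset of eigenvalues of $s$ is invariant under $e\mapsto e^{-1}$.

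For part (1), I first observe that since $M$ is diagonal one has ${}^t\!M=M$, so the hypothesis is simply that $M$ is conjugate to $M^{-1}$. Consequently the multiset of eigenvalues of $M$ is invariant under $e\mapsto e^{-1}$. If $e\neq \pm 1$, then $e$ and $e^{-1}$ are distinct eigenvalues of $M$, occurring with the same multiplicity. If the multiplicity of $e$ were greater than $1$, the multiplicity of $e^{-1}$ would also exceed $1$, contradicting that $M$ is almost cyclic. Hence $e$ has multiplicity $1$.

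For part (2), choose a maximal torus $T\subseteq G$ containing $s$ and decompose $V=\bigoplus_{\mu\in\Om(V)}V_\mu$ into $T$-weight spaces. In a weight basis the matrix of $s$ is the diagonal matrix whose entries are $\mu(s)$, each repeated $\dim V_\mu$ times. Self-duality $V\cong V^*$ as $G$-modules forces $\Om(V)=-\Om(V)$ as multisets (with multiplicities), so the diagonal matrix representing $s$ has its eigenvalue multiset invariant under inversion; equivalently, this matrix is conjugate (by a suitable permutation matrix) to its inverse-transpose. Since $s$ is assumed almost cyclic on $V$, part (1) applies and yields the claim. For the final clause, I will invoke the classical fact that for the listed root systems the longest element $w_0$ of the Weyl group acts as $-1$ on the weight lattice (these are precisely $A_1, B_n, C_n, D_{2k}, E_7, E_8, F_4, G_2$). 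For such $G$ and any $\om\in\Om^+$ the dual of $V_\om$ has highest weight $-w_0\om=\om$, so $V_\om\cong V_\om^*$ and part (2) applies.

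There is no real obstacle here; the lemma is essentially a symmetry/counting remark. The only point requiring a little care is the reduction in part (2): one must record that although $s$ is merely semisimple (a priori only conjugate into $T$), the eigenvalues of $s$ on $V$ are exactly the values $\mu(s)$ with multiplicities $\dim V_\mu$, so self-duality of $V$ really does translate into the inversion symmetry of the eigenvalue multiset needed to apply part (1).
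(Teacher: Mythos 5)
Your proof is correct and is essentially the paper's own argument: the paper dismisses (1) as clear and derives (2) from the well-known self-duality of the irreducible modules for the listed types, and you have simply supplied the bookkeeping (diagonal $\Rightarrow$ ${}^tM=M$, inversion-invariance of the eigenvalue multiset, reduction to (1) via the weight decomposition and $\Om(V)=-\Om(V)$). One small remark: your list for the final clause correctly reads $E_7,E_8$ (the types with $-w_0=1$), whereas the paper's ``$E_n$'' taken literally would include $E_6$, for which not every irreducible module is self-dual; your version is the right reading.
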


\begin{proof} The first claim is clear, whereas the second one follows from a well known fact that every \irr
  of $G$ listed is self-dual.\end{proof}



\begin{lemma}\label{ma2} Let $M=M_1\otimes M_2$ be a Kronecker product of diagonal non-scalar matrices $M_1, M_2$ of sizes $m\leq n$,
respectively.

$(1)$ The \ei multiplicities of $M$ do not exceed $mt$, where t is the maximal \ei \mult 
of $M_2$. 

$(2)$ If $M_1,M_2$ are cyclic then the \ei multiplicities of $M$ do not exceed  $m$.

$(3)$ Suppose that M is almost cyclic. Then $M_1$ and $M_2$ are cyclic.

$(4)$ Suppose that M is almost cyclic  and $M_i$ is similar to $M_i\up$ for $i=1,2$. Then the \ei multiplicities of $M$ do not exceed  $2$. In addition, if $e$
is an \ei of M of \mult $2$ then $e\in\{\pm 1\}$. \el

\begin{proof} (1) Let $e$ be an \ei of $M$. Then $e=a_ib_j$, where $a_i,b_j$ are some \ei of  $M_1,M_2$, respectively. Then the \mult of $e$ equals $\sum m(a_i)m(b_j)$, where $m(a_i),m(b_j)$ is the \mult of $a_i,b_j$, respectively, and the sum ranges over the pairs
$a_i,b_j$ with $e=a_ib_j$.  We have $\sum m(a_i)m(b_j) \leq (\sum m(a_i))\cdot \max_j m(b_j)\leq m t$, as claimed. 

(2) Suppose the contrary, and let $r>m$ be the \mult of some \ei $e$ of $M$.
 Let $a_1\ld a_m$, $b_1\ld b_n$ be the \eis of $M_1$, $M_2$, respectively.
Then there exist $a_i,b_j,b_k$ $(1\leq i\leq m, 1\leq j<k\leq n)$ such that $a_ib_j=a_ib_k$ and hence $b_j=b_k$. This is a contradiction.

(3) Suppose that $M_1$ has an \ei $a$, say, of \mult  $r>1$. Let $b_1,b_2$ be distinct \eis of $M_2$. Then $ab_1,$ $a b_2$ 
are distinct \eis of $M$, each of \mult greater than 1. So $M$ is not almost cyclic, thus establishing the claim.

(4)   Suppose the contrary, and let $e$ be an \ei of $M$ of \mult at least 2. By (3), $M_1$ and $M_2$ are cyclic so $e=a_ib_i$ for $i=1,2$ and some (distinct) \eis $a_i$ of $M_1$ and $b_i$ of  $M_2$. Then $e\up=a_i\up b_i\up$ is an \ei of $M$ of the same \mult as that of $e$. As $M$ is almost cyclic,  $e=e\up$ and $e\in\{\pm 1\}$. Then $a_1b_1=a_2\up b_2\up$, whence
  $a_1b_2=a_2\up b_1\up$. 
If $a_2\up\neq a_1$ then  $a_1b_2$ is an \ei of $M$ of \mult at least 2; 
this is not equal to $e=a_1b_1$, hence $s $ is not almost cyclic on $M$, a contradiction. So $a_2= a_1\up$. If the \mult of $e$ is at least 3, say, $e=a_3b_3$ then, as above, this implies $a_3= a_1\up$. Then $a_3=a_2$, contrary to $(3)$. 
\end{proof}
 
Note that the assertion (1) of Lemma \ref{ma2} is known, see for instance \cite[Lemma 3.7]{LiS}.  

 \begin{lemma}\label{ss2} Let
 $V,M$ be non-trivial G-modules and  $s\in T$.

 $(1)$ If $s$ is  cyclic on $V$ then $s$ is regular.

 $(2)$ If $s$ is almost cyclic on $V\otimes M$ then $s$ is regular.\el

 \begin{proof} (1) Let $\rho$ be the \rep afforded by $V$. If $s$ is  cyclic on $V$ then
   $C_{GL(V)}(\rho(s))$ is a torus. Then so is $C_{G}(s)^\circ$, giving the claim.

   (2) By Lemma \ref{ma2}(3), $s$ is cyclic on $V$, so the result follows from (1).\end{proof}

In Lemma \ref{td2} below and elsewhere, for $A,B\subset \Om$ we define $A+B=\{a+b: a\in A,b\in B\}$. 

 \begin{lemma}\label{td2} Let
 $V,V_1, V_2$ be non-trivial G-modules.
Let $s\in G$ be a non-central semisimple element and assume that $s$ is    almost cyclic on V.

 $(1)$ \St $V=V_1\otimes V_2$. Then the matrices of $s$  on $V_1$ and $V_2$ are  cyclic, all weights of
$V_1$ and $V_2$ are of multiplicity $1$, and $s$ is regular.

$(2)$ Suppose that $\Om(V_1)+\Om(V_2)=\Om (V)$. Then $s$ separates the weights of $V_1$ and $V_2$.

$(3)$ Suppose that $\Om(V_1)+\Om(V_2)=\Om (V)$ and the weights of $V$ are of \mult $1$.
Then the \mult of every \ei of $s$ on $V$ does not exceed $|\Om(V_1)|$, the number of weights of $V_1$. \el

\begin{proof} Assertion (1) follows from Lemma \ref{ma2}, except the last claim, so suppose that $s$ is  not regular.  Then
  $C_G(s)$ contains a unipotent element $u\ne 1$. As $u$ stabilizes every eigenspace of $s$ on $V_1$, at least one of them is
  of dimension greater than $1$, contradicting that $s$ is cyclic on $V_1$.

  (2) Suppose the contrary, that the weights of $V_1$, say, are not separated by $s$, so
  there exist distinct weights
$\mu_1,\mu_2\in\Om(V_1)$ such that $\mu_1(s)=\mu_2(s)$.
Then for every $\lambda,\mu\in\Omega(V_2)$, $\mu_i+\lambda,\mu_i+\mu\in\Omega(V)$ for $i=1,2$ and
$(\mu_1+\lam)(s)=(\mu_2+\lam)(s)$ and $(\mu_1+\mu)(s) = (\mu_2+\mu)(s)$. This implies $\lam(s)=\mu(s)$ as  
$s$ is almost cyclic on $V$. Since $\mu$ is an arbitrary weight of $V_2$, we conclude that $s$ is scalar on $V_2$, and hence $s\in Z(G)$, a contradiction.

(3) Suppose the contrary. Let $e$ be an \ei of $s$ on $V$ whose \mult exceeds $m=|\Om(V_1)|$.
As all weight spaces of $V$ have dimension at most one, there are at least $m+1$ distinct weights
$\si_1\ld \si_{m+1}$ of $V$ such that $e=\si_1(s)=\cdots =\si_{m+1}(s)$. We have
$\si_{i}=\mu_i+\lam_i$ for $\mu_i\in\Om(V_1)$, $\lam_i\in\Om(V_2)$  for $i=1\ld m+1.$ By hypothesis,
$\mu_i=\mu_j$ for some
$1\leq i\ne j\leq m+1$, which yields both $\lambda_i\ne \lambda_j$ and  $\lambda_i(s) = \lambda_j(s)$, contradicting  (2).\end{proof}

Obviously, $\Om(V_1)+\Om(V_2)=\Om (V)$ if $V=V_1\otimes V_2$; however in many instances the former equality holds for $V\ne V_1\otimes V_2$, see Lemma \ref{wtlattice}(2) below. The next example illustrates the use of Lemma \ref{td2}(3).

\begin{examp} {\rm (1) Let $G=A_n$, $p=0$, $V_1=V_{\om_1}$, $V_2=V_{m\om_1}$, $V=V_{(m+1)\om_1}$.
Then all weight multiplicities of $V$ are known to be equal to 1 and $\Om(V_1)+\Om(V_2)=\Om (V)$
by {\rm \cite[Ch. VIII, \S 7, Proposition 10]{Bo8}}. So if $s$ is almost cyclic on $V$ then the \mult of any \ei of $s$ on $V$ is at most $n+1$.  

(2)  Let $G=A_n$, $V_1=V_{\om_1}$, $V_2=V_{\om_n}$ and $V=V_{\om_1+\om_n}$.  Then $\Om(V)=\Om(V_1)+ \Om(V_2)$. As $V$ satisfies the hypotheses of Lemma \ref{nm1}(2),   some $s$ is  almost cyclic on $V$. Here the zero weight of $V$ is of \mult $n$ if $n+1$ is not a multiple of $p$, and $n-1$ otherwise. So the bound in Lemma \ref{td2}(3) is close to the best possible one. In  Example \ref{e2x} below we show that the bound is in fact sharp.} \end{examp}

For reader's convenience we record here the \f 
 well known observations on the natural modules for classical groups. Note that we add
the case $n=3$ for $D_n$ for our convenience here.

\begin{lemma}\label{co1} {\rm \cite[Lemma 6]{TZ21}} Let $G\in \{A_n$, $B_n,p\neq2$, $C_n,n>1$, $D_n,n\geq 3\}$,
$V=V_{\om_1}$ and let $s\in G$ be a non-central semisimple element.

$(1)$ If $s$ is cyclic on $V $, then $s$ is regular.
For $G=A_n$ or $C_n$, the converse is true.

$(2)$  If $G=B_n$, then $s$ is regular \ii the \mult of the \ei $-1$ on $V$ is at most $2$ and the other \ei multiplicities
are equal to  $1$.

$(3)$ If  $G=D_n$, then $s$ is regular \ii the multiplicities  of the \eis $1$ and $-1$ on $V$
are at most $2$ and the other \ei multiplicities are equal to  $1$. In addition, if $s$
is not almost cyclic on $V$, then $s$  is not almost cyclic on $V_{\om_2}$.

$(4)$ If $s$ is  regular, then $s$ is almost cyclic  on $V$ unless $G=D_n,p\neq 2$ and $1,-1$ are \eis of
$s$ on V, each of \mult $ 2$. \el

\bl{dc22}  Let $G=  B_2$, $p\neq 2$, and let $s\in T_{reg}$. Then s is cyclic  on $V_{\om_2}$ and  almost cyclic  on  $V_{\om_1}$. \el

\begin{proof} This follows from  \cite[Lemma 3.10(1)]{TZ22}.
(Note that there is an algebraic group \ho $h:Spin_5(F)\ra Sp_4(F)$; under this \ho the natural module for $B_2$ has  highest weight  $\om_2$ as a $C_2$-module. The natural module for $C_2$ viewed as  a module for $B_2$ has highest weight $\om_2$.)  
 \end{proof}

The \f lemma is frequently used with no explicit reference, especially when $S$ consists of a single element.

\begin{lemma}\label{re3} {\rm \cite[Ch. II, \S 4, Theorem 4.1]{Spr}} Let G be a connected semisimple algebraic group, T a maximal torus  and $S\subset T$ a subset.  
Then the \f conditions are equivalent:

$(1)$ $C_G(S)^\circ=T;$

$(2)$ $C_G(S)$ consists of semisimple elements; 

$(3)$ $\al(S)\neq 1$ for every root $\al$ of $G$.\el

The \f theorem proved in \cite{TZ21} is of fundamental significance for this paper,
this is used many times, often without an explicit reference.

 \begin{theo}\label{ag8} {\rm \cite[Theorem 1]{TZ21}} Let $G$ be a simply connected simple  algebraic group  and $\phi$ a non-trivial \irr of $G$. Then
the following statements are equivalent:

$(1)$ there exists a non-central semisimple element $s\in G$ such that 
$\phi(s)$ is almost cyclic; 

$(2)$ All non-zero weights of $\phi$ are of \mult $1$.
\end{theo}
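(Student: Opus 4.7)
The two implications have quite different flavors, and I would treat them separately.

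For $(2)\Rightarrow(1)$ the plan is a genericity argument. Consider the finite union $\mathcal{F} = \bigcup_{\mu\neq\nu \in \Omega(V)}\ker(\mu-\nu) \;\cup\; \bigcup_{\alpha\in\Phi}\ker(\alpha) \;\cup\; Z(G)$ inside $T$; each piece is a proper closed subgroup, and the whole is a proper closed subset of $T$, so one can select $s\in T\setminus\mathcal{F}$. Such $s$ is non-central, regular, and the map $\mu\mapsto \mu(s)$ is injective on $\Omega(V)$. Under (2) every non-zero weight has a one-dimensional weight space, so it contributes to $\phi(s)$ an eigenvalue of multiplicity exactly one, distinct from the eigenvalue $1$ coming from $V^T$. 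Thus $\phi(s)$ has at most one eigenvalue of multiplicity greater than one, namely $1$ with multiplicity $\dim V^T$, and $\phi(s)$ is almost cyclic.

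For $(1)\Rightarrow(2)$ I would argue contrapositively. Assume some non-zero $\mu\in\Omega(V)$ satisfies $\dim V_\mu = k\geq 2$, and let $s\in G$ be semisimple with $\phi(s)$ almost cyclic; the goal is to deduce $s\in Z(G)$. Conjugating we can take $s\in T$. Since $W$ preserves weight multiplicities, every $w\mu$ with $w\in W$ has multiplicity $k$, so each eigenvalue $(w\mu)(s)$ of $\phi(s)$ has multiplicity at least $k\geq 2$. Almost cyclicity allows at most one multi-eigenvalue, forcing all these to collapse to a single value $e$; note that since $G$ is simple the $W$-module $\Omega\otimes\mathbb{Q}$ is irreducible, so $\mu\neq 0$ is not $W$-fixed and the orbit $W\mu$ has more than one element. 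In particular $(s_\alpha\mu-\mu)(s)=1$ for each simple root $\alpha$, which translates to $\alpha(s)^{\langle\mu,\alpha^\vee\rangle}=1$, so $\alpha(s)$ is a root of unity whose order divides $\langle\mu,\alpha^\vee\rangle$ whenever the latter is non-zero. Together with the criterion that $s\in Z(G)$ \ii $\alpha(s)=1$ for every simple $\alpha$, this already pins $s$ down to a finite subset of $T$.

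The main obstacle is promoting these constraints to $\alpha(s)=1$ for every simple root. The cleanest route uses a second multi-weight: if $\dim V^T \geq 2$ then the eigenvalue $1$ of $\phi(s)$ is itself high-multiplicity, so almost cyclicity forces $e=1$, giving $\mu(s)=1$ and $\lambda(s)=1$ for every $\lambda$ in the $\ZZ$-span $\langle W\mu\rangle$, a finite-index sublattice of $R$; the annihilator of $R$ in $T$ is $Z(G)$, so $s$ is central. When $\dim V^T\leq 1$ the plan is to locate a second dominant weight $\nu$ with $\dim V_\nu\geq 2$ whose coroot pairings are suitably coprime to those of $\mu$ (for any sufficiently large $V$ violating (2), such $\nu$ exists by Freudenthal's multiplicity formula, and for the remaining families one can appeal to the classification in \cite{TZ2}), and combine $\alpha(s)^{\langle\nu,\alpha^\vee\rangle}=1$ with the analogous constraint for $\mu$ to extract $\alpha(s)=1$. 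The residual delicate cases, representations small enough that only one $W$-orbit of multi-weights is available, I would expect to dispatch by direct inspection using the existing structural knowledge of \ir modules whose non-zero weights all have \mult one.
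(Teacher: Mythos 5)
This theorem is not proved in the present paper: it is imported verbatim from \cite[Theorem 1]{TZ21}, so there is no in-paper argument to compare against line by line. Judged on its own terms, your proposal splits into a correct half and an incomplete half. The direction $(2)\Rightarrow(1)$ is fine: your genericity argument is exactly the mechanism the paper itself isolates in Lemma \ref{nm1} (the set of $t\in T$ separating the weights of $V$ is nonempty open, and any such non-central $t$ is almost cyclic when at most one weight has multiplicity greater than $1$). Excluding the root kernels as well is harmless but unnecessary for this implication.

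The direction $(1)\Rightarrow(2)$ — the substantive content of \cite[Theorem 1]{TZ21} — has a genuine gap. Your reflection argument correctly yields $\alpha(s)^{\langle\mu,\alpha^\vee\rangle}=1$, but the step you use to close the case $\dim V^T\geq 2$ is not valid: from $\lambda(s)=1$ for all $\lambda$ in the $\ZZ$-span of $W\mu$ you conclude $s\in Z(G)$ "because the annihilator of $R$ in $T$ is $Z(G)$". However $\langle W\mu\rangle$ need not contain $R$ (e.g.\ for $A_1$ and $\mu=4\om_1$ one gets $2R$), and the annihilator of a proper finite-index sublattice of $R$ is strictly larger than $Z(G)$ in an algebraically closed field: it only forces $\alpha(s)$ to be a root of unity of bounded order, i.e.\ $s$ is torsion modulo the centre, not central. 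So even in your "clean" case the argument does not rule out non-central $s$. The remaining case $\dim V^T\leq 1$ is explicitly deferred to "a second suitable orbit of multi-weights" plus "direct inspection", with no proof that such an orbit exists or that the coprimality you need can be arranged; moreover the classification you propose to invoke from \cite{TZ2} describes the modules \emph{satisfying} (2), whereas the contrapositive requires control over the multi-weights of modules \emph{violating} (2). As it stands, the hard implication is a plausible plan with one incorrect step and one unproven step, not a proof.
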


Recall the notation $e(G)$, which denotes the maximum of the squares of the ratios of the lengths of the roots in $\Phi(G)$. We will rely on the following important result \cite[Theorem 1]{Pr}:

\begin{theo}\label{premet}  Assume $p=0$ or $p>e(G)$.
Let $\lambda$ be a $p$-restricted dominant weight. Then 
$\Omega(V_\lambda)=\{w(\mu)\, |\, \mu\in \Omega^+, \mu\preceq\lambda,w\in W(G)\}.$\end{theo}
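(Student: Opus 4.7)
The plan is to prove the two inclusions separately. The containment $\Omega(V_\lam)\subseteq\{w(\mu):\mu\in\Omega^+,\mu\preceq\lam,w\in W(G)\}$ holds in any characteristic: since $W(G)=N_G(T)/T$ permutes the weight spaces of any $G$-module by conjugation, $\Omega(V_\lam)$ is $W$-stable, so every weight is $W$-conjugate to a unique dominant weight $\mu$, and because $\lam$ is the highest weight of $V_\lam$ this $\mu$ satisfies $\mu\preceq\lam$. This direction requires no hypothesis on $p$.

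The substance is the reverse inclusion: every dominant $\mu\preceq\lam$ must be shown to occur in $\Omega(V_\lam)$. I would first pass to the Weyl module $V(\lam)$ obtained by reduction modulo $p$ from an admissible $\ZZ$-lattice in the characteristic-zero simple module of highest weight $\lam$; by the Weyl character formula its character coincides with the classical Weyl character, so $\Omega(V(\lam))$ is exactly the saturated set on the right-hand side. Since $V_\lam$ is the unique simple quotient of $V(\lam)$, the task reduces to showing that no dominant weight $\mu\preceq\lam$ is killed in the surjection $V(\lam)\twoheadrightarrow V_\lam$. I would argue by downward induction on the height of $\lam-\mu$ in the root lattice: the base case $\mu=\lam$ is trivial, and at each inductive step I would select a dominant $\mu'$ with $\mu\prec\mu'\preceq\lam$ already known to occur in $V_\lam$ and then certify $\mu\in\Omega(V_\lam)$ from $\mu'\in\Omega(V_\lam)$.

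The engine of the inductive step is the representation theory of the $SL_2$-subgroups $X_\al\subset G$ attached to roots of $G$: choose $\al$ with $\mu'-\mu\in\ZZ_{>0}\al$, restrict $V_\lam$ to $X_\al$, and exploit that $s_\al(\mu')\in\Omega(V_\lam)$ by $W$-stability. The hypothesis $p>e(G)$ enters precisely here: it forces every $X_\al$-composition factor of $V_\lam$ to be a symmetric power of the natural $X_\al$-module with highest weight strictly less than $p$, for which the characteristic-zero weight-string property (any weight intermediate between two known weights on an $\mathfrak{sl}_2$-string is itself a weight of the module) persists. Under this property, $\mu\in\Omega(V_\lam)$ is forced by the $\mathfrak{sl}_2$-weight-string running between $\mu'$ and $s_\al(\mu')$.

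The hardest step is the combinatorial construction of a chain $\lam=\mu_0\succ\mu_1\succ\cdots\succ\mu_k=\mu$ of dominant weights in which each consecutive pair is controlled by some root $\al$ in the manner above. Verifying feasibility of such a chain for every dominant $\mu\preceq\lam$, together with ensuring that the bound $p>e(G)$ is genuinely available at each step so that the $X_\al$-composition factors stay in the ``good'' range, is where the argument in \cite{Pr} invests most of its technical effort and where a naive inductive attempt is most likely to stall; in particular the bound $e(G)$ appears exactly because the largest relevant $\mathfrak{sl}_2$-module encountered in such a walk through the dominant cone has highest weight bounded in terms of root-length ratios.
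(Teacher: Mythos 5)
First, note that the paper does not prove this statement at all: it is quoted verbatim as \cite[Theorem 1]{Pr} (Premet's theorem), so there is no internal proof to compare against. Your first paragraph (the inclusion $\Omega(V_\lambda)\subseteq\{w(\mu)\}$ via $W$-stability and the highest-weight property) and your reduction to the Weyl module $V(\lambda)$, whose weight set is the full saturated set by the Weyl character formula, are both correct and are indeed the standard opening moves.

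The genuine gap is in what you call the ``engine.'' The claim that $p>e(G)$ ``forces every $X_\al$-composition factor of $V_\lam$ to be a symmetric power of the natural $X_\al$-module with highest weight strictly less than $p$'' is false. The $p$-restriction of $\lambda$ only bounds $\langle\lambda,\alpha_i^\vee\rangle<p$ for \emph{simple} roots $\alpha_i$; for a weight $\nu$ of $V_\lambda$ and an arbitrary root $\alpha$, the value $\langle\nu,\alpha^\vee\rangle$ can be as large as $\langle\lambda,\tilde\alpha^\vee\rangle$ ($\tilde\alpha$ the highest root in the $W$-orbit of $\alpha$), which grows like $(p-1)$ times the height of $\tilde\alpha^\vee$ and far exceeds $p$ already for the adjoint-type weights. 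Consequently $V_\lambda|_{X_\alpha}$ has non-restricted composition factors, and for those the $\mathfrak{sl}_2$-weight-string is genuinely broken (e.g.\ $L(p)=L(1)^{[1]}$ has weights $\{p,-p\}$ only), so the intermediate weight $\mu$ is not ``forced'' by knowing $\mu'$ and $s_\alpha(\mu')$ are weights. Making the string argument work requires summing over all composition factors of the restriction and a much more careful bookkeeping — this, together with the chain construction you yourself flag as the place the argument ``is most likely to stall,'' is exactly where Premet's paper does its real work (and where the hypothesis $p>e(G)$ actually enters, via the Cartan integers $\langle\alpha,\beta^\vee\rangle=\pm e(G)$ for roots of different lengths, not via a bound on the highest weights of the $X_\alpha$-constituents). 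As written, your proposal is a correct strategic outline with the two decisive steps asserted rather than proved, and one of the two assertions is false in the form stated.
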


An application of Theorem \ref{premet} then gives:

\begin{lemma}\label{wtlattice} {\rm \cite[Lemma 2.6]{TZ22}}
  Assume $p=0$ or $p>e(G)$. Let $\lambda,\mu\in \Omega^+$  with $\lambda$ $p$-restricted, and $V_\lambda$,  respectively,
$V_\mu$ the associated irreducible $G$-modules. Then the following hold.

\med 

{\rm{(1)}} If $\mu\prec\lambda$ then
    $\Omega(V_\mu)\subseteq \Omega(V_\lambda)$.

{\rm{(2)}} If $\lambda+\mu$ is $p$-restricted then   $\Omega(V_{\lambda+\mu})=\Omega(V_\lambda\otimes V_\mu)=\Omega(V_\lambda)+\Om( V_\mu)$.

{\rm{(3)}} If $\lam\neq 0$ is radical then some root is a weight of $V_\lam$; otherwise $\Omega(V_\lam)$ contains some minuscule weight.
\end{lemma}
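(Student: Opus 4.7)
The plan is to prove all three parts by systematically invoking Premet's theorem (Theorem \ref{premet}), which under the hypothesis $p=0$ or $p>e(G)$ identifies $\Omega(V_\lambda)$ with the union of $W$-orbits of dominant weights $\preceq \lambda$ whenever $\lambda$ is $p$-restricted. So each part reduces to a combinatorial statement about the dominance order.

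For (1), given $\sigma\in \Omega(V_\mu)$, first note that $\mu\preceq \lambda$ with $\lambda$ $p$-restricted forces $\mu$ to be $p$-restricted as well (a short check using the Cartan coefficients and the bound $p>e(G)$, which controls how large $-C_{ji}$ can be). Hence Premet applies to $V_\mu$ and writes $\sigma=w(\tau)$ with $\tau\in \Omega^+$ and $\tau\preceq \mu$. Transitivity of $\preceq$ then gives $\tau\preceq\lambda$, and a second application of Premet to $V_\lambda$ yields $\sigma\in \Omega(V_\lambda)$.

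For (2), the inclusion $\Omega(V_{\lambda+\mu})\subseteq \Omega(V_\lambda\otimes V_\mu)$ holds because $V_{\lambda+\mu}$ appears as the Cartan component of $V_\lambda\otimes V_\mu$ in good characteristic, and the equality $\Omega(V_\lambda\otimes V_\mu)=\Omega(V_\lambda)+\Omega(V_\mu)$ is the standard character identity. For the reverse containment, take $\sigma=\sigma_1+\sigma_2$ with $\sigma_i\in \Omega(V_{\lambda_i})$, where $\lambda_1=\lambda$, $\lambda_2=\mu$. By Premet write $\sigma_i=w_i(\tau_i)$ with $\tau_i\in\Omega^+$, $\tau_i\preceq \lambda_i$. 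Using the elementary fact that, for any weight $\nu$, the dominant representative $\nu^+$ of $W\nu$ satisfies $(\alpha+\beta)^+ \preceq \alpha^+ + \beta^+$, we obtain $\sigma^+\preceq \tau_1+\tau_2\preceq \lambda+\mu$. Since $\lambda+\mu$ is $p$-restricted by hypothesis, Premet applied to $V_{\lambda+\mu}$ places $\sigma\in \Omega(V_{\lambda+\mu})$.

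For (3), consider the coset of $\lambda$ in $\Omega/R$. If $\lambda\notin R$, the class is non-trivial, and the theory of minuscule weights supplies a unique minimal dominant representative $\mu$ of this coset, which is minuscule; by minimality $\mu\preceq \lambda$, so Premet gives $\mu\in \Omega(V_\lambda)$. If $0\ne \lambda\in R$, then one must produce a dominant root $\alpha\preceq \lambda$; Premet then yields $\alpha\in \Omega(V_\lambda)$, which is the desired root. The existence of such an $\alpha$ reduces to the assertion that the highest root $\alpha_0$ (and, in non-simply-laced types, the highest short root) is a minimal non-zero dominant element of the root lattice — a well-known structural fact about root systems. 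This last combinatorial step, together with ensuring that $\mu$ remains $p$-restricted in part (1), is the only delicate point; the rest of the argument is a clean reduction to Premet's theorem.
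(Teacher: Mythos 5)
The paper gives no proof of this lemma at all: it is quoted verbatim from \cite[Lemma 2.6]{TZ22}, with only the remark that it is ``an application of Theorem \ref{premet}''. Your overall strategy --- reduce everything to Premet's description of $\Omega(V_\lambda)$ as $\{w(\tau):\tau\in\Omega^+,\ \tau\preceq\lambda\}$ plus combinatorics of the dominance order --- is therefore exactly the intended route, and parts (2) and (3) of your argument are correct: the inequality $(\alpha+\beta)^+\preceq\alpha^++\beta^+$ does what you need in (2) (note that $\lambda+\mu$ being $p$-restricted automatically makes $\mu$ $p$-restricted, since the coefficients add), and in (3) the unique minuscule weight of the coset $\lambda+R$, respectively the highest \emph{short} root for the radical coset, is indeed $\preceq$ every non-zero dominant weight of that coset, so Premet applied to $V_\lambda$ finishes it.

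There is, however, one genuinely false step in your proof of (1): it is \emph{not} true that $\mu\preceq\lambda$ with $\lambda$ $p$-restricted forces $\mu$ to be $p$-restricted. For $G=A_2$, $p=3$, $e(G)=1$, take $\lambda=2\omega_1+2\omega_2$ and $\mu=\lambda-\alpha_1=3\omega_2$: this $\mu$ is dominant and $\prec\lambda$ but not $3$-restricted, so Premet cannot be applied to $V_\mu$ as you propose. The repair is immediate, because you are only using the \emph{trivial} inclusion $\Omega(V_\mu)\subseteq\{w(\tau):\tau\in\Omega^+,\ \tau\preceq\mu\}$, which holds for every dominant $\mu$ without any appeal to Premet or to $p$-restrictedness: if $\sigma\in\Omega(V_\mu)$ then its dominant $W$-conjugate $\sigma^+$ is again a weight of $V_\mu$ and hence satisfies $\sigma^+\preceq\mu\preceq\lambda$, after which Premet applied to the $p$-restricted weight $\lambda$ gives $\sigma\in\Omega(V_\lambda)$. (This also matters because the lemma does not assume $\mu$ is $p$-restricted, so your version of (1) would silently prove a weaker statement.) The same remark applies to your use of Premet on the factors in (2): only the easy containment is needed there.
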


A dominant weight $\nu\neq 0$ is called minuscule if $W\nu=\Om(V_\nu)$, that is, the weights of $V_\nu$  form a single orbit under the Weyl group. The minuscule weights are well known and tabulated in \cite[Ch. VIII,\S 3]{Bo8}.

\begin{lemma}\label{nm1} Let V be a non-trivial $G$-module. Let $T'\subset T$ be the subset of all $t\in T$ that
  separate the weights of V.  
    
{\rm{(1)}} $T'$ is nonempty Zarisky open in T.

    {\rm{(2)}} Suppose that at most one weight of $V$ has multiplicity greater than $1$. Then all elements  of $T'$ are almost cyclic on V.
  \end{lemma}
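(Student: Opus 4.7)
My plan is to reduce both parts to statements about the characters $\mu-\nu$ of $T$ for distinct weights $\mu,\nu\in\Omega(V)$.

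For part (1), I first rewrite the defining condition of $T'$ as follows. An element $t\in T$ lies in $T'$ exactly when every eigenspace of $t$ on $V$ is a weight space, and this holds if and only if $\mu(t)\neq\nu(t)$ for every pair of distinct $\mu,\nu\in\Omega(V)$; equivalently, $(\mu-\nu)(t)\neq 1$. (Indeed, the weight space decomposition $V=\bigoplus_{\mu\in\Omega(V)}V_\mu$ refines the eigenspace decomposition of $t$, and the two coincide precisely when distinct weights take different values on $t$.) Hence
\[
T'=T\setminus\bigcup_{\mu\neq\nu\in\Omega(V)}\ker(\mu-\nu),
\]
a finite union of closed subgroups of $T$. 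This immediately shows $T'$ is Zariski open in $T$.

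For nonemptiness, I use that distinct elements of $\Omega\subseteq\mathrm{Hom}(T,F^\times)$ are distinct characters, so $\mu-\nu$ is a nontrivial character of $T$ whenever $\mu\neq\nu$; consequently each $\ker(\mu-\nu)$ is a proper closed subgroup of $T$, of dimension strictly less than $\dim T$. Since $T$ is irreducible as an algebraic variety and $\Omega(V)$ is finite, the union displayed above is a proper subset of $T$, and so $T'\neq\emptyset$.

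For part (2), let $t\in T'$. Being a semisimple element of $T$ acting on a rational $G$-module, $t$ is diagonalizable on $V$ with eigenspace decomposition refined by the weight space decomposition; by the defining property of $T'$, these two decompositions coincide. Thus the eigenvalues of $t$ on $V$ are exactly the scalars $\{\mu(t):\mu\in\Omega(V)\}$, all distinct, and the multiplicity of $\mu(t)$ equals the multiplicity of the weight $\mu$. By hypothesis at most one weight has multiplicity exceeding $1$, so at most one eigenvalue of $t$ on $V$ has multiplicity exceeding $1$; that is, $t$ is almost cyclic on $V$.

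The argument is essentially bookkeeping once the correct reformulation of the separation condition is in hand; the only step requiring slight care is noting the irreducibility of the torus $T$ to conclude that $T'$ is nonempty rather than merely open, but this is a standard property of tori over an algebraically closed field.
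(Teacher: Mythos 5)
Your proof is correct and follows essentially the same route as the paper: you express $T'$ as the complement of the finite union of the closed sets $\{t:\mu(t)=\nu(t)\}=\ker(\mu-\nu)$, use irreducibility of $T$ and nontriviality of the characters $\mu-\nu$ to get nonemptiness, and for (2) observe that on $T'$ the eigenspace decomposition coincides with the weight space decomposition so eigenvalue multiplicities equal weight multiplicities. The only difference is that you spell out explicitly why the union is proper, a point the paper leaves implicit.
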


  \begin{proof} $(1)$  Let $\mu,\nu$ be weights of $V$. Then $T_{\mu,\nu} :=\{x\in T\, |\, \mu(x)=\nu(x)\}$ is
    a Zarisky
    closed subset $T_{\mu,\nu}$ of $T$. The set of elements of $T$ that do not
    separate some pair of weights of $V$, being the finite union of all $T_{\mu,\nu}$ with $\mu\neq \nu$,
    is a proper closed subset of $T$. So $T'=T\setminus (\cup T_{\mu,\nu})$ is nonempty open.

    $(2)$ If $s\in T'$ then $\mu(s)\neq \nu(s)$ whenever $\mu\neq \nu$ are weights of $V$.
    Therefore, the \eis of
    $s$ on $V$ are exactly $\mu(s)$, where $\mu$ runs over the weights of $V$,
and the \mult of $\mu(s)$ equals that of $\mu$, whence the claim.  \end{proof}
 
We often use the \f lemma with no explicit reference.

\bl{ft1} $(1)$ Let V,$V_1$ be FG-modules (for some group G) that are  Galois conjugates of each other. Let $s\in G$ be a $p'$-element. Then $m_s(V)=m_s(V_1)$.

$(2)$ Let G be an \ag and $s\in G$ a semisimple element. Let V be a G-module and $V_1$ a Frobenius twist of V. Then $\dim V^s=\dim V_1^s$ and  $m_s(V)=m_s(V_1)$.
\el
 
\bp (1) is obvious. (2) A Frobenius twist, as it is defined in Section 2, arises from a so called standard Frobenius mapping $Fr:G\ra G$; the latter can be defined in terms of the mapping $h_k:(a_{ij})\ra (a^{p^k}_{ij}))$ for every matrix $(a_{ij})\in GL_n(F)$ as follows, see \cite[\S 1.17]{Ca}. A mapping $Fr:G\ra G$ is called standard Frobenius if   there exists a faithful rational \rep $\rho:G\ra GL_n(F)$ such that
   $\rho(Fr(g))=h_k(\rho(g))$ for every $g\in G$.  If $(a_{ij})$ is a diagonal matrix then  $h_k(a_{ij})=(a_{ij})^{p^k}$. As $s$ is semisimple, so is $\rho(s)$, and hence $x\rho(s)x\up$ is diagonal for some $x\in GL_n(F)$. So $h_k(x\rho(s)x\up)=h_k(x)\rho(Fr(g))h_k(x)\up)=(x\rho(s)x\up)^{p^k}=x\rho(s^{p^k})x^{-1}$, whence
$\rho(Fr(s))$ is a conjugate of $\rho(s^{p^k})=\rho(s)^{p^k}$ for every \rep $\rho$.  In particular, this holds for the \rep afforded by $V$. Since $\rho(s)$ is semisimple, the \ei 1 \mult of $\rho(s)^{p^k}$ coincides
with that of $\rho(s)$. If $e_1,e_2$ are distinct \eis of $\rho(s)$ then $e^{p^k}_1,e_2^{p^k}$ are distinct \eis of $\rho(s)^{p^k}$. So the result follows. (It is rather customary to use the term "standard Frobenius" only in the case with $k=1$; in the above reasoning it is convenient not to assume $k=1$.) \enp

Note that we cannot drop "Frobenius" from the assumption in (2); indeed, let $G$ be a simple \ag of type $D_4$, and  the highest weights of  $V,V'$ are $\om_1,\om_4$, respectively.  Then $V'$ is a  twist of $V$
(by a graph \au of order 3) but, if $p\neq 3,5$ then $m_s(V)=2$, $m_s(V')=0$ for a suitable element of order 15.

\bl{a0b} Let $G$ be a simple  algebraic group in characteristic p  and $V$ a non-trivial p-restricted \ir  $G$-module. Let $p=0$ or $p> e(G)$. 
If $0\in\Om(V)$ then one of the \f holds:

$(1)$  $\Phi(G)\subseteq \Om(V);$ 

$(2)$ $V$ is the short root module;

$(3)$ $G$ is of type $C_n$ and $\om=\om_i$ for $i>2$ even.  \el

\bp If all root lengths are the same then the result is known, see for instance \cite[Proposition 2.3]{SZ06}. 
So we are left with groups $G$ of type $B_n,C_n,F_4$ or $ G_2$, and we have to show that the maximal root $\om_a$ lies in $\Om(V)$ unless (2) or (3) holds. We proceed case-by-case.
Let $\om=\sum a_i\om_i$ be the \hw of $V$.

Let $G$ be of type $B_n$. Then $2\om_1\succ \om_a=\om_2$ and $2\om_n\succ \om_{n-1}\succ\om_{n-2}\succ  \cdots\succ \om_2\succ \om_1$, see \cite[Planche II]{Bo}. As $\om\succ 0$, the coefficient $a_n$ must be even. 
Therefore, $\om\succeq (a_1+\cdots +a_{n-1}+\frac{a_n}{2})\om_1$. If  $a_1+\cdots +a_{n-1}+\frac{a_n}{2}>1$ then $\om\succeq 2\om_1\succ \om_2$,  as desired.
If  $a_1+\cdots +a_{n-1}+\frac{a_n}{2}=1$ and $\om\neq \om_1$ then $\om\in\{\om_2\ld \om_{n-1}, 2\om_n\}$. So the result again follows.

Let $G$ be of type $C_n$. Then $ \om_a=2\om_1 $, $\om_i\succeq\om_2$  for $i$ even, and $\om_i\succeq \om_1 $ for $i$ odd. 
Therefore, $\om\succeq(\sum_{i\,\, odd}a_i)\om_1+(\sum_{i\,\, even}a_i)\om_2$. Here $\sum_{i\,\, odd}a_i$ is even as $\om\succ 0$; if this is  non-zero then the result  follows. Otherwise, since $2\om_2\succ 2\om_1$, the result again follows unless $\om=\om_i$ for $i$ even. Moreover, 
$2\om_1=\om_a$ is not a subdominant weight of $\om_i$ as one can conclude by comparing the $\al_1$-coefficients of the expansions of $\om_i$'s in terms of simple roots
in  \cite[Planche III]{Bo}.

Let $G$ be of type $F_4$. Then $\om_2\succ\om_3\succ \om_1=\om_a\succ  \om_4 $ and $2\om_4\succ\om_1$, see \cite[Planche XIII]{Bo}. 
So
$\om\succeq (a_1+a_2+a_3+a_4)\om_4\succeq 2\om_4\succ \om_1$, unless $a_1+a_2+a_3+a_4=1$. If $a_1+a_2+a_3>0$ then the result follows, otherwise $a_1=a_2=a_3=0$, $a_4=1$, that is, $\om=\om_4$ is a short root.

Let $G$ be of type $G_2$. Then $2\om_1\succ \om_a=\om_2\succ\om_1$ so 
$\om\succeq(a_1+a_2)\om_1$. If $a_2>0$ or $a_2=0,a_1\geq 2$ then the result follows, otherwise $\om=\om_1$, which is  a short root.\enp

\def\bb{by Lemma \ref} 

\def\Bb{By Lemma \ref}

\bl{hh7} Let G be a simple \ag and V a G-module. Let $s\in T_{reg}$. Suppose that $\mu(s)=\nu(s)$ for distinct weights $\mu,\nu\in\Om(V)$
and $\mu-\al,\nu-\al\in\Om(V)$ for some $\al\in\Phi(G)$. Then s is not almost cyclic on V.
\el

\bp Suppose the contrary. Observe that $\mu(s)=\nu(s)$ implies $(\mu-\al)(s)=(\nu-\al)(s)$. If $s$  is almost cyclic on $V $ then $(\mu-\al)(s)=\mu(s)$,
whence $\al(s)=1$.  \Bb{re3}(3), this is a contradiction as $s$ is regular. \enp

\bl{sr1} Let $s\in G=SL_n(F)$ be a strictly regular element. Then $s$ is cyclic on $V_{\om_2}$.  \el

\bp The weights of $V_{\om_2}$ are $\ep_i+\ep_l$ for $i<l$, so if $s$ is not cyclic on $V_{\om_2}$ then $(\ep_i+\ep_l)(s)=(\ep_j+\ep_k)(s)$ for some $i<l$, $j<k$ and $(i,l)\neq (j,k)$. \itf $(\ep_i-\ep_j)(s)= (\ep_k-\ep_l)(s)$. As $\ep_i-\ep_j$ with $i\neq j$ is a root, and $s$ is regular, we have $i\neq j$. Similarly, we have $k\neq l$. Then  $\ep_i-\ep_j$, $\ep_k-\ep_l$ are distinct roots, which is a contradiction as $s$ is strictly regular. 
\enp

 For reader's convenience we quote here \cite[Theorem 2]{TZ21}:

\begin{theo}\label{c99} Let $G$ be a simply connected simple  algebraic group defined over a field  of characteristic $p\geq 0$ and
let $s\in G$ be a non-regular non-central semisimple element.
Let $V$ be a non-trivial irreducible $G$-module.
Then  either $s$ is not almost cyclic on $V$
or
one of the \f holds:

{\rm {(1)}} $G$ is classical, that is, of Lie type $A_n,B_n\,\,(p\neq 2)$, $C_n$ or $D_n$ and $\dim V=n+1,2n+1,2n,2n$, respectively;

{\rm{(2)}} $G=B_n,p=2$ and $\dim V= 2n;$

{\rm{(3)}}  $G=A_3$ and $\dim V=6;$

{\rm{(4)}} $G=C_2\cong B_2$, $p\neq 2$ and $\dim V=5.$
\end{theo}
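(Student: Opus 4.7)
The plan is to combine Theorem \ref{ag8} with the classification of irreducible $G$-modules having all non-zero weights of multiplicity $1$ (the main theorem of \cite{TZ2}) and then eliminate all but the listed exceptional cases by exploiting the root-theoretic consequence of non-regularity.

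First, I may assume that $s$ is almost cyclic on $V$; otherwise the first alternative of the conclusion already holds. Theorem \ref{ag8} then forces every non-zero weight of $V$ to have multiplicity $1$, so $V$ is drawn from the explicit list of \cite{TZ2}. That list consists of the natural modules of classical groups, the modules $V_{\omega_2}$ for $A_n$, some spin (and half-spin) modules in low rank, a small collection of minuscule and next-to-minuscule modules for exceptional types, and a few other modules of small dimension. Next, by Lemma \ref{re3} applied to $S=\{s\}$, the non-regularity of $s$ is equivalent to the existence of a root $\alpha\in\Phi$ with $\alpha(s)=1$; fix such an $\alpha$ and set $\Phi_s=\{\beta\in\Phi:\beta(s)=1\}$, which is a (non-empty) closed subsystem.

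Whenever $\mu,\mu'\in\Omega(V)$ differ by an element of $\Phi_s$ we have $\mu(s)=\mu'(s)$. Since every non-zero weight is simple, each such coincidence contributes to some eigenvalue of multiplicity $\geq 2$; almost cyclicity says at most one such eigenvalue $e$ exists, so \emph{all} coincidences of this type must yield the same value $e$. The module-by-module analysis now proceeds as follows. For the natural modules in cases (1)--(4), the weight set is essentially $\{\varepsilon_1,\ldots,\varepsilon_m\}$ (for type $A$), $\{\pm\varepsilon_i\}$ or $\{\pm\varepsilon_i,0\}$ (for types $C,D,B$), and the exceptional-isomorphism analogues for $A_3\cong D_3$ and $B_2\cong C_2$; here one verifies directly (cf.\ Lemma \ref{co1}, Lemma \ref{dc22}) that a well-chosen non-regular $s$ is almost cyclic, so these cases genuinely occur. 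For every other $V$ on the list, I would produce an obstruction by exhibiting, for each possible $\Phi_s\neq\emptyset$, \emph{two} disjoint $\alpha$-strings, i.e.\ pairs $\{\mu_1,\mu_1+\alpha\}$ and $\{\mu_2,\mu_2+\alpha\}\subset\Omega(V)$ of non-zero weights with $\mu_1(s)\neq\mu_2(s)$; by the reasoning of Lemma \ref{hh7}, this would force a second doubled eigenvalue and contradict almost cyclicity.

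The main obstacle is controlling the low-rank coincidences and the small-dimensional modules of exceptional type. One must check carefully that $V_{\omega_2}$ for $A_3$ (dimension $6$) really is the $A_3\cong D_3$ incarnation of the natural $D_3$-module (and thus survives, giving (3)), and that $V_{\omega_1}$ for $B_2$ with $p\neq 2$ is the $B_2\cong C_2$ version of the natural $C_2$-module (giving (4)); the subtlety is that in characteristic $2$ the natural $B_n$-module behaves differently and that deformations via Frobenius/graph twists can masquerade as separate cases. For minuscule and small-weight modules of the other exceptional groups ($G_2, F_4, E_6, E_7, E_8$) one needs Lemma \ref{a0b} and the standard description of $\Omega(V)$ to show that the weight lattice is "wide" enough to always supply the disjoint $\alpha$-strings, regardless of which root subsystem $\Phi_s$ is chosen; the verification is finite but laborious, and is the place where the argument truly depends on having the complete classification of \cite{TZ2} in hand.
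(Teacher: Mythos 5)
A preliminary remark: this paper does not prove Theorem \ref{c99}; it is quoted verbatim from \cite[Theorem 2]{TZ21} (``For reader's convenience we quote here \dots''), so there is no in-paper proof to compare yours against. Judged on its own terms, your outline does capture the natural opening moves --- reduce via Theorem \ref{ag8} to the modules classified in \cite{TZ2}, then exploit the fact (Lemma \ref{re3}) that non-regularity of $s$ yields a root $\al$ with $\al(s)=1$, so that $\al$-strings in $\Om(V)$ produce repeated eigenvalues --- but two substantive gaps remain.

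First, the \cite{TZ2} classification of irreducible modules with all non-zero weights of multiplicity one is not confined to the $p$-restricted modules you enumerate: by Steinberg's tensor product theorem it also contains tensor products of Frobenius twists (for instance $V_{\om_1}\otimes V_{\om_1}^{Fr^k}$ for $A_1$), and your module-by-module analysis never touches these. They must be excluded, and the exclusion comes not from your string argument but from a different mechanism available in this paper: almost cyclicity of $s$ on a non-trivial tensor product forces $s$ to be regular (Lemma \ref{ss2}(2), via Lemma \ref{ma2}(3)), contradicting the hypothesis; Frobenius twists of a single restricted module are then handled by Lemma \ref{ft1}. Second, and more seriously, your central obstruction --- ``for each possible $\Phi_s$ exhibit two disjoint $\al$-strings with $\mu_1(s)\neq\mu_2(s)$'' --- is not a well-posed combinatorial check: the values $\mu_i(s)$ are not determined by the subsystem $\Phi_s$, so the existence of two strings with \emph{distinct} eigenvalues cannot be read off from $\Phi_s$ and the weight diagram alone. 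If every $\al$-string in $\Om(V)$ happened to evaluate to the same $e$, no contradiction arises at that stage; one must then argue that the resulting relations $(\mu_1-\mu_2)(s)=1$ force further roots into $\Phi_s$ and iterate until $s$ is driven into the centre, or the module is forced to be (essentially) natural. That iteration is the actual content of the theorem, and labelling it ``finite but laborious'' leaves the proof effectively unwritten for every non-natural module on the list.
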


\section{Eigenvalue multiplicities: a proof of Theorem \ref{ab9}}\label{sec:bound}

In this section we bound the \ei multiplicities of a regular semisimple element $s\in G$ on a $G$-module $V$ provided
$s$ is almost cyclic  on $V$. Of course, in this case there is at most one \ei of \mult greater than 1, so we bound the \mult of this single eigenvalue.

\subsection{A uniform bound}

\bl{mk8} Let G be a simple \ag in   characteristic $p=0$ or $p> e(G)$, and let $0\neq \om\in\Om^+$. If $p>0$,
suppose that  $\om$ is not of the form $ p\om'$ for some $\om'\in\Om^+$. 
Let $V=V_\om$ and $\lam\in \Om(V)$. Then there is $\al\in \pm \Pi$ such that  $\lam-\al\in\Om(V)$.
\el

\bp If $p=0$ then the result is true by \cite[Ch. VIII, \S 7, Prop. 5]{Bo8}. If $p> e(G)$ and $\om$ is $p$-restricted then the result follows from that for $p=0$ and Lemma \ref{wtlattice}. Suppose that $\om$  is not $p$-restricted. Then $V=M\otimes V'$, where $M,V'$ are \ir  $G$-modules, and   $M$ is  $p$-restricted. 
Every weight $\lam$ of  $V$ is of shape $\mu+\nu$ with  $\mu\in \Om(M)$ and $\nu\in \Om(V')$. As $M$ is $p$-restricted, we have  $\mu-\al\in  \Om(V)$ for some $\al\in \pm \Pi$ by the above. Then
 $\lam-\al=(\mu-\al)+\nu\in \Om(V)$, as required. 
\enp

\begin{theo}\label{do1}  Let G be a simple \ag of rank n in   characteristic $p$.  Assume $p=0$ or $p>e(G)$. Let $S\subset T$ be a subgroup such that $C_G(S) = T$,  and $V$ an \ir $G$-module with $p$-restricted
  highest weight $\om\neq 0$. Assume in addition that some homogeneous component U  of $V|_S$  is  of dimension greater than $1$  and all other homogeneous component of $V|_S$ are $1$-dimensional. Then

$(1)$ if $ V^T\neq 0$, then  $V^S= V^T$ and $\dim V^S\leq n+1;$

$(2)$ $\dim U\leq 2n$. 
\end{theo}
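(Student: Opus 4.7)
The plan is to exploit two structural facts. Lemma~\ref{mk8} guarantees that every weight $\lam\in\Om(V)$ admits some $\al\in\pm\Pi$ with $\lam-\al\in\Om(V)$. The hypothesis $C_G(S)=T$, combined with Lemma~\ref{re3}, forces $\al|_S\neq 1$ for every root $\al$, so the sublattice $K=\{\lam\in\Om:\lam|_S=1\}$ contains no roots. Two weights $\mu,\nu$ of $V$ lie in the same homogeneous component of $V|_S$ if and only if $\mu-\nu\in K$.

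For part (1), assume $V^T\neq 0$ and suppose for contradiction that there exists $\mu\in\Om(V)\setminus\{0\}$ with $\mu|_S=1$. Then $V^T$ and $V_\mu$ both lie in the trivial-character component, so $\dim V^S\geq 2$, and uniqueness of the big component forces $V^S=U$. Apply Lemma~\ref{mk8} to $\mu$ to obtain $\al\in\pm\Pi$ with $\mu-\al\in\Om(V)$. Since $0\in\Om(V)$, Lemma~\ref{a0b} gives $\Phi(G)\subseteq\Om(V)$ in the generic case, so $-\al\in\Om(V)$. Then $-\al$ and $\mu-\al$ are two distinct weights with the same nontrivial restriction $\al^{-1}|_S$, producing a second big homogeneous component and contradicting the uniqueness of $U$. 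The exceptional situations of Lemma~\ref{a0b} (short-root modules, and $G=C_n$ with $\om=\om_i$, $i>2$ even) are handled by iterating Lemma~\ref{mk8} along a chain $\mu\to\mu-\al_1\to\cdots\to 0$ until a simple root is used whose negative is a weight, and then applying the same argument at that step. Once $V^S=V^T$ is established, the hypothesis implies that the only weight of $V$ of multiplicity greater than $1$ is zero (any other would yield a second big component as above), and the bound $\dim V^T\leq n+1$ follows from the classification \cite{TZ2} of \ir $G$-modules whose nonzero weights are all of multiplicity $1$ (see Table~1).

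For part (2), if $U$ contains the zero weight then $U=V^S$ and part (1) yields $\dim U\leq n+1\leq 2n$. If instead $U$ corresponds to a nontrivial character $\chi$ of $S$, the approach is to bound the number of distinct weights in $U$. For each $\mu\in U$, Lemma~\ref{mk8} provides $\al\in\pm\Pi$ with $\mu-\al\in\Om(V)$; since $\al|_S\neq 1$, the weight $\mu-\al$ has $S$-character $\chi\cdot\al^{-1}|_S\neq\chi$, so $\mu-\al$ lies in a $1$-dimensional homogeneous component outside $U$. If $\mu_1,\mu_2\in U$ both satisfy $\mu_i-\al\in\Om(V)$ for the same $\al$, then $\mu_1-\al$ and $\mu_2-\al$ share a $1$-dimensional component, so they coincide, and $\mu_1=\mu_2$. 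Hence each $\al\in\pm\Pi$ accounts for at most one weight of $U$, giving $|U|\leq|{\pm\Pi}|=2n$. An sl$_2$-string argument along a simple-root direction, using that the weight spaces $V_{\mu\pm\al}$ for $\mu\in U$ are either empty or $1$-dimensional (being necessarily outside $U$), then rules out weights of multiplicity greater than~$1$ in $U$, so $\dim U=|U|\leq 2n$.

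The main obstacle will be the final step, namely converting the bound $|U|\leq 2n$ on the number of distinct weights into the dimension bound $\dim U\leq 2n$: this amounts to excluding weights of multiplicity $>1$ inside a nontrivial-character component $U$, which requires careful use of sl$_2$-string symmetry and unimodality (valid under Premet's theorem~\ref{premet} since $p=0$ or $p>e(G)$) combined with the fact that neighbouring weight spaces of any $\mu\in U$ are $1$-dimensional. The exceptional cases of Lemma~\ref{a0b} and the corner case $V^T=0$ with $U=V^S$ will also require bookkeeping but follow the same scheme.
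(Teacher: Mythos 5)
Your strategy tracks the paper's quite closely: in (1) you manufacture, from a hypothetical non-zero weight $\mu$ with $\mu|_S=1$, a pair of distinct weights with equal non-trivial restriction to $S$ and hence a second homogeneous component of dimension greater than $1$; in (2) you use exactly the paper's pigeonhole over the $2n$ elements of $\pm\Pi$. However, two steps do not go through as written. In part (1), your treatment of the exceptional case $G=C_n$, $\om=\om_i$ ($i>2$ even) of Lemma \ref{a0b} fails: you propose to iterate Lemma \ref{mk8} along a chain $\mu\to\mu-\al_1\to\cdots$ until a simple root with $-\al\in\Om(V)$ is used, "applying the same argument at that step". But the argument at step $k$ needs the two weights $\mu_k-\al$ and $-\al$ to restrict to the same character of $S$, i.e.\ it needs $\mu_k|_S=1$; since every root is non-trivial on $S$, the intermediate weights $\mu_k=\mu-\al_1-\cdots$ no longer restrict trivially, so the trick is only available at the first step. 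There it can genuinely fail: for $\mu=\om_n=\ep_1+\cdots+\ep_n$ (which occurs, e.g., for $C_4$ with $\om=\om_4$ from Table 1) the only $\al\in\pm\Pi$ with $\mu-\al\in\Om(V)$ is the long simple root $2\ep_n$, whose negative is not a weight of $V$. The paper's fix is to leave $\pm\Pi$ altogether and use the non-simple short root $\be=-\ep_r-\ep_{r-1}$, for which both $\be$ and $\mu+\be=\om_{r-2}$ lie in $\Om(V)$.

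In part (2), the passage from $|U|\leq 2n$ (a count of distinct weights) to $\dim U\leq 2n$ is left open; you flag it yourself as the main obstacle. Your proposed $\mathfrak{sl}_2$-string argument is not carried out, and the monotonicity of weight multiplicities along root strings that it would require is not automatic for rational modules in characteristic $p$ (it already fails for Frobenius twists of the natural $SL_2$-module), so it would need a separate justification for the $p$-restricted module at hand. The paper disposes of this in its very first line by citing \cite[Theorem 5]{TZ21}: under the stated hypothesis on $V|_S$, every non-zero weight of $V$ has multiplicity $1$ (this is the subgroup version of Theorem \ref{ag8}), so the weights occurring in a non-trivial homogeneous component are automatically of multiplicity $1$ and $\dim U=|U|$. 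The same citation is what licenses the appeal to Table 1 for $\dim V^T\leq n+1$ in part (1); your ad hoc substitute ("any other multiple weight would yield a second big component") only covers the case $\dim V^T\geq 2$, though that happens to be the only case where the bound is non-trivial. With these two repairs --- the non-simple short root in the $C_n$ case and the upfront appeal to \cite[Theorem 5]{TZ21} --- your argument coincides with the paper's.
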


\begin{proof}  By \cite[Theorem 5]{TZ21}, all non-zero weights of $V$ are of \mult 1. Therefore, $\omega$ appears in  Tables 1,2, and in particular $\dim V^T\leq n+1$. 
By Lemma \ref{re3} we have $\al(S)\neq 1$ for every $\al\in\Phi(G)$.

\med
(1) Here we have $V^T\subseteq V^S$.  Suppose the contrary, that $V^T\neq V^S$, so $\dim V^S>1$. Note that $V^S$ is a sum of weight spaces of $V$, so  
$\mu(S)=1$ for some non-zero weight of $V$. 

Suppose first that $\omega$ is a root.  Then all non-zero weights of $V$ are also roots. (This is well known.)  
Since  $\al(S)\neq 1$ for every $\al\in\Phi(G)$, we conclude that
 $V^S = V^T$, a contradiction.

 Next suppose that $\omega$ is not a root. By the above, $\mu$ is not a root. By replacing $S$ by a conjugate if necessary, we may assume that $\mu$ is dominant. By Lemma \ref{mk8}, $\mu+\beta$ is  a weight of $V$
for some $\beta\in \pm\Pi$. Then $\mu+\beta\ne\beta$, while $\mu+\beta$
and $\beta$ restricts to the same (non-zero) character of $S$. If $\beta$ is not a weight of $V$ then $C_G(S)=T$ implies $\be(S)=1$, a contradiction. So $\beta$ is not a weight of $V$.  
But $0\prec \omega$ so Lemma \ref{wtlattice}(3)
implies that some root is a weight of $V$. We deduce that $\Phi(G)$ has two root
lengths and either the long roots or the short roots are not weights of $V$. As $p> e(G)$ if $p>0$,  by Lemma \ref{a0b}, either $\om$ is a short root, a contradiction,
or $G=C_n$ and $\om=\om_i$ with $i$ even. In the latter case $\om_j\in \Om(V)$ for $i-j>0$ even, and these  are the only dominant weights of $\Om(V)$. So $\mu=\om_r$ with $i-r>0
$ even. Here $r>2$ as $\om_2$ is a root. Then $r-2\neq 0$ and $\om_r-\om_{r-2}=\ep_r+\ep_{r-1}\in\Phi(G)$. 
Set $\be=-\ep_r-\ep_{r-1}$. As $\be$ is a short root, it lies in the $W$-orbit of $\om_2=\ep_1+\ep_2$, and hence $\be\in\Om(V)$. However, by the above,
we have a contradiction.

\med
(2)  
In view of (1) we can assume that  $\dim V^T\leq 1  $, so all weights of $V$ are of \mult 1.  
If $U= V^S$ then the claim follows from (1). So we now
assume that $U\cap V^S=0$ hence $U$ is the sum of $T$-weight
spaces which  are of dimension $1$ by Theorem \ref{ag8}. Assume for a contradiction that there exist distinct $T$-weights
$\mu_1,\dots,\mu_{2n+1}$ such that $\mu_j|_S = \eta$ for some character   $\eta$ of $S$ and all $1\leq j\leq 2n+1$. For each $j$ there exists
$\gamma_j\in \pm\Pi$ such that $\mu_j+\gamma_j$ is a weight of $V$. Then there exists $i\ne j$ such that $\gamma_i=\gamma_j$. But then
$\mu_i+\gamma_i$ and $\mu_j+\gamma_j$ are distinct weights of $V$ and have equal restrictions to $S$.   By the hypothesis,
 this then means that $\mu_i|_S = \eta|_S =
(\mu_i+\gamma_i)|_S$, whence $\gamma_i(S)=1$, a contradiction.
\end{proof}

\subsection{$G\neq A_n$: Special cases}

A more detailed case-by-case analysis allows us to improve the bound  $\dim V_S(\eta)\leq 2n$ in Theorem \ref{do1} to
$\dim V_S(\eta)\leq n+1$. We prefer to assume $S=\lan s\ran$ to be cyclic, the general case is similar. 
We first perform this for groups $G\neq A_n$, and then consider the case of $G=A_n,n>1,$ 
which turns out to be more complex. We denote by $m_s(V)$ the maximum of \ei multiplicities of $s\in G$ on $V$. 
In this section   we obtain a sharp bound $m_s(V)\leq 2$ in a number of special cases. 

Recall  (Theorem \ref{ag8}) that the non-zero weights of an \ir $G$-module $V$ are of \mult 1 whenever a semisimple element $s\in G$ is almost cyclic on $V$. Such modules are classified in \cite{TZ2} and the highest weights of $V$, for $V$ $p$-restricted, are reproduced in Tables 1,2 at the end of this paper.

\begin{lemma}\label{o10} Let $G=B_n$, $p\neq 2$, or $C_n$, $p=2$, and  
let $s\in T_{reg}$.  Suppose that s is almost cyclic  on $V_{\om_n}$.
Then   $m_s(V_{\om_n})\leq 2$.\el

\begin{proof} The weights of $V$ are $\frac{1}{2}(a_1\ep_1 +\cdots+a_n \ep_n)$ for $G=B_n$ and $a_1\ep_1 +\cdots+a_n \ep_n$ for $G=C_n$,
where $a_1\ld a_n\in\{\pm 1\}. $    Let $\mu,\nu\in\Om (V)$ be distinct weights such that $\mu(s)=\nu(s)$. Note that $\pm\ep_i$, respectively, $\pm 2\ep_i$ for $i=1\ld n$ are roots of $G$ of type $B_n$, respectively, $C_n$. If $\mu\neq -\nu$ then, obviously,  there is $i$ such that $\mu-\al,\nu-\al\in\Om(V)$ for $\al\in \{\pm\ep_i:1\leq i\leq n\}$, respectively,  $\al\in \{\pm 2\ep_i:1\leq i\leq n\}$. This contradicts Lemma \ref{hh7}. 

Now suppose the contrary, and let  $m_s(V_{\om_n})> 2$. Then there are distinct weights $\lam,\mu,\nu$ of $V$ such that $\lam(s)=\mu(s)=\nu(s)$.
By the above, $\lam=-\mu$, $\lam=-\nu$ and $\nu=-\mu$. This is a contradiction.\end{proof}

\begin{lemma}\label{o11} Let $G=B_n$, $n>2$, $p\neq 2$, or $C_n$, $p=2$, and let  $V$ be a non-trivial  \ir p-restricted G-module of \hw $\om$.    
Let $s\in T_{reg}$.  Suppose that s is almost cyclic  on $V$.
Then  one of the \f holds:

$(1)$ $\om\in\{\om_1,\om_n\}$ and $m_s(V)\leq 2;$ 

$(2)$  $\om=\om_2$, $n>2$, and $n-1\leq m_s(V)\leq n;$ 

$(3)$ $G=B_n, \om=2\om_1$ and  $n\leq m_s(V)\leq n+1$.\el

\bp By Theorem \ref{ag8}, all non-zero weights of $V$ are of \mult 1, and hence, by Theorem \ref{ag8}     and Tables 1,2, 
 $\om\in \{\om_1,\om_n,\om_2,2\om_1\}$, the latter for $p\neq 2$. For $\om=\om_1$ see Lemma \ref{co1}, for  $\om=\om_n$ see Lemma \ref{o10}.  
If  $\om=\om_2$, $n>2$,  or $2\om_1$  with $p\neq 2$ then $\dim V^T\geq 2$ by Table 1. Then $m_s(V)=\dim V^s$, and $V^T=V^s$ by Theorem \ref{do1}. So the result follows from Table 1. \enp

The case $G=B_2\cong C_2$, $p\neq 2$, will be considered later in Lemma \ref{2bc}.

\bl{11d} Let $G$ be an \ag of type $D_n, n>3$, and let $V=V_\om$, where $\om\in\{\om_1,\om_{n-1},\om_n\}$. Let $\mu,\nu\in\Om(V)$. Then  $\mu-\al,\nu-\al\in\Om(V)$ for some root $\al$ of G, unless one of the \f holds:

$(1)$ $\mu=-\nu$ and either n is even or $\om=\om_1;$

$(2)$ n is odd, $\om\neq \om_1$  and $\mu=-\nu\pm \ep_i$ for some $i\in\{1\ld n\}$.
\el

\bp  Suppose that $\om=\om_1$. Then the weights of $V$ are $\pm\ep_1\ld \pm\ep_n$. We can assume that $\mu=\ep_i$ for $i\in\{1\ld n\}$. If $\nu=\ep_j$ $(1\leq j\leq n, j\neq i)$ then set $\al=\ep_i+\ep_j$. 
If $\nu=-\ep_j$, $j\neq i$, then set $\al=\ep_i-\ep_j$. Then $\mu-\al,\nu-\al\in\Om(V)$ in each case.

 Suppose that $\om=\om_n$ or $\om_{n-1}$. Then the weights of $V$ are  $\frac{1}{2}(\sum a_i\ep_i)$ with $a_i\in\{1,-1\}$, and the number of indices $i$ with $a_i=-1$ is even if $\om  =\om_n$, otherwise it is odd.  Let $\mu=\frac{1}{2}(\sum a_i\ep_i)$, $\nu=\frac{1}{2}(\sum b_i\ep_i)$ be weights of $V$, and let $L=\{i: a_i=b_i\}$. If $|L|>1$ then $\al=a_i\ep_i+a_j\ep_j\in \Phi(G)$  for $i,j\in L$, $i\neq j$ and   $\mu-\al,\nu-\al\in\Phi(G)$, as required. 

Suppose that $|L|=1$. Let $i\in L, $ $j\notin L$, $1\leq j\leq n$. Then $\mu+\nu=\pm \ep_i$ and $n$ is odd (otherwise $-\nu\in \Om(V)$ and $\mu-(-\nu)=\pm \ep_i$ is not a radical weight).   

Suppose that $|L|=0$. Then $\mu=-\nu$ and $n$ is even.   \enp



\begin{lemma}\label{oo1} Let $G=D_n,n>3,$ let $s\in T_{reg}$.  
Suppose that s is almost cyclic  on $V= V_{\om_i}$ for $i\in\{1,n-1,n\}$. Then $m_s(V)\leq 2$.\el

\bp Suppose the contrary, and let $\mu_1,\mu_2,\mu_3$ be distinct weights of $V$ such that $\mu_1(s)=\mu_2(s)=\mu_3(s)$. If $\mu_i-\al,\mu_j-\al\in \Om(V)$ for some root $\al$ and some distinct  $i,j\in\{1,2,3\}$ then the result follows from Lemma \ref{hh7}. 

Suppose that for every distinct $i,j\in\{1,2,3\}$
and any root $\al$ either $\mu_i-\al\notin \Om(V)$ or $\mu_j-\al\notin \Om(V)$. By Lemma \ref{11d}, this implies that $n$ is odd and $\mu_i+\mu_j=\pm \ep_{k(i,j)}$ with $k(i,j)\in\{1\ld n\}$  for every pair $i,j\in\{1,2,3\}$ with $i\neq j$. If $k(1,2)\neq k(1,3)$ then $\mu_3-\mu_2=(\mu_1-\mu_2)-(\mu_1-\mu_3)=\pm \ep_{k(1,2)}\mp \ep_{k(1,3)}$ is a root, a contradiction. So  $k(1,2)= k(1,3)$, and similarly, $k(1,2)= k(2,3)$. This means that $\mu_i+\mu_j=a(i,j)\ep_k$ for some fixed $k\in\{1\ld n\}$ and $a(i,j)\in\{1,-1\} $ for every pair $i,j\in\{1,2,3\}$ with $i\neq j$. By reordering of $\mu_1,\mu_2,\mu_3$ we can assume that $\mu_1+\mu_2=\mu_1+\mu_3$, whence  $\mu_2=\mu_3$, a contradiction.  \enp

\bl{tp7} Let $G=D_n,n>3,$ let M be an \ir G-module with \hw $\om\in\{\om_1,\om_n,\om_{n-1}\}$ and let  $V $  an arbitrary \ir G-module. Let $s\in T_{reg}$. Suppose that s is almost cyclic on $M\otimes V$. Then  $m_s(M\otimes V)\leq 2$.
\el

\bp Suppose the contrary, and let $\lam_i$, $i=1,2,3$ be distinct weights of $M\otimes V$ such that $\lam_i(s)=e\in E(M\otimes V)$.
Then $\lam_i=\mu_i+\nu_i$, where $\mu_i,\nu_i$ are weights of $M,V$, respectively. \Bb{ma2}(3), $s$ is cyclic on $M$ and $V$.
If  (*) there exists a root $\al$ of $G$ such that $\mu_i-\al, \mu_j-\al\in \Om(M)$ for some $i,j\in\{1,2,3\}$ then $\mu_i-\al+\nu_i, \mu_j-\al+\nu_j\in \Om(V)$, and  $(\mu_i-\al+\nu_i)(s)=( \mu_j-\al+\nu_j)(s)=e\cdot \al(s)\up$. So the result follows from Lemma \ref{hh7}. 

Suppose that (*) does not hold. In this case, by Lemma \ref{11d},   $n$ is odd and $\mu_i+\mu_j=\pm \ep_{k(i,j)}$ with $k(i,j)\in\{1\ld n\}$  for every pair $i,j\in\{1,2,3\}$ with $i\neq j$. Then we repeat the argument in Lemma \ref{oo1} to get a contardiction.  

\enp

\begin{lemma}\label{cn3} Let $G=C_3$, $p\neq 2$. Let $s\in T_{reg} $.  
Suppose that s is almost cyclic  on $V=V_{\om_3}$. Then $m_s(V)\leq2$. \el

\begin{proof} Note that every weight of $V$ has \mult 1 (see Table 2)
and  $\Om(V)=X\cup Y$, where $X=\{\pm \ep_1, \pm \ep_2,\pm \ep_3 \}$ and $Y=\{\pm \ep_1 \pm \ep_2\pm \ep_3 \}$. Suppose the contrary, and let $\mu_i$, $i=1,2, 3$, be distinct weights of $V$ such that $\mu_i(s)=e$. By  Lemma \ref{tt4}, $e\in\{\pm 1\}$. If $\mu_i\in  X$ for some $i$
then $(-\mu_i)(s)=e$, which is false as $X=\Omega(V_{\om_1})$ and then $s$ is not regular by Lemma \ref{co1}(1). So $\mu_i\in Y$ for every $i=1,2,3$. Then
 $\mu_i=a_{i1}\ep_1+a_{i2}\ep_2+a_{i3}\ep_3$ for $a_{ik}\in \{\pm 1\}$ for $1\leq i,k\leq 3$. By reordering $\mu_i$ we can assume $a_{11}=a_{21}$, and then $\mu_1-\al,\mu_2-\al\in\Omega(V)$ for $\al\in \{\pm 2\ep_1\}$. This contradicts Lemma \ref{hh7}. \end{proof}

\begin{lemma}\label{5n5} Let $G=C_{n}$, $n>2$, $p=3$, and $\om\in\{\om_{n-1},\om_n\}$. Let $s\in T_{reg}$ be  almost cyclic  on $V_{\om}$. 
Then $m_s(V)\leq 2$.\el

\bp The non-zero weights of $V$ are $\pm \ep_{i_1}\pm \cdots \pm \ep_{i_r}$ for $r\in\{1\ld n\}$, where $r-n$ is odd if $\om=\om_{n-1}$, and even if $\om=\om_n$.
  
 Suppose the contrary, let $m_s(V)> 2$ and let  $\lam,\mu\in\Om(V)$, $\lam(s)=\mu(s)=e$, $\lam\neq\mu$,
where   we can assume that $\lam\neq -\mu$. By  Lemma \ref{tt4}, $e\in\{\pm 1\}$. Set  $\lam=\sum a_i\ep_i$, $\mu=\sum b_i\ep_i$, where  $a_i,b_i\in\{0,1,-1\}$.

Observe that $\lam,\mu$ are non-zero. Indeed, otherwise, $\lam(s)=\mu(s)=1$ so $\dim V^s\geq 2$. Then $m_s(V)=\dim V^s$
and $\dim V^T=\dim V^s$ by Theorem \ref{do1}(1). However,  $\dim V^T=1$,
which is a contradiction.

 If $a_i=b_i\neq 0$ for some $i$ then $\lam-2a_i\ep_i$, $\mu-2a_i\ep_i \in\Om(V)$ and $(\lam-2a_i\ep_i)(s)=(\mu-2a_i\ep_i)(s)$. As $2a_i\ep_i$ is a root, we have a contradiction \bb{hh7}.

If $a_i=-b_i\neq 0$ for some $i$ then we replace $\lam $ by $-\lam$; as $e\in\{\pm 1\}$, we have $(-\lam)(s)=\lam(s)$, so we  arrive at the case considered above.  

So $a_ib_i=0$ for every $i=1\ld n$. 
 Using the Weyl group, we can assume that $\lam=\ep_1+\cdots+\ep_{r-1}-\ep_r$,
$\mu=\ep_{r+1}+\cdots+\ep_{r'}$ for some $0<r<r'\leq n$. Then $\lam+(\ep_r-\ep_{r+1}),\mu+(\ep_r-\ep_{r+1})\in\Om(V)$, and $(\lam+(\ep_r-\ep_{r+1}))(s)=(\mu+(\ep_r-\ep_{r+1}))(s)$. As above, this contradicts Lemma \ref{hh7}. \enp

\begin{lemma}\label{cn5} Let $G=C_{n}$, $n\geq 2,p>3$, and $\om\in\{\om_{n-1}+\frac{p-3}{2}\om_n, \frac{p-1}{2}\om_n\}$. Let $s\in T_{reg}$ be  almost cyclic  on $V_{\om}$. Then $m_s(V_\om)\leq 2$.\el

\bp The weights of $V_\om$ are $l_1\ep_1+\cdots+l_n\ep_n$, where $-(p-1)/2\leq l_1\ld l_n\leq (p-1)/2$, see \cite[proof of Lemma 1.6]{Z90}; moreover, if $\mu$ is of this form then $\mu$ is a weight of $V_{\om}$ with $\om\in\{\om_{n-1}+\frac{p-3}{2}\om_n, \frac{p-1}{2}\om_n\}$.  Let $\lam,\mu,\nu$ be three distinct  weights of $V_\om$  such that  $\lam(s)=\mu(s)=\nu(s)$. As  $(-\lam)(s)=(-\mu)(s)$, we have $\lam(s)=\mu(s)=\nu(s)\in\{\pm 1\}$. Let
 $$\lam=l_1\ep_1+\cdots+l_n\ep_n,\,\,\,\,\mu=m_1\ep_1+\cdots+m_n\ep_n, \,\,\,\,\nu=k_1\ep_1+\cdots+k_n\ep_n.$$

\Bb{hh7}, either $\lam-2\ep_i$ or $\mu-2\ep_i$ is not a weight of $V_{\om}$ for every $ i=1\ld n. $    
Observe that $\lam-2\ep_i$, respectively,  $\mu-2\ep_i$ is  a weight of $\Om(V_{\om})$ unless $l_i\leq 1-\frac{p-1}{2}$, respectively,  or $m_i\leq 1-\frac{p-1}{2}$, and similarly for the pairs $\lam,\nu$ and $\mu,\nu$.
So there are at least two pairs with $i$-th coefficients at most $1-\frac{p-1}{2}$. By reordering $\lam,\mu,\nu$ we can assume that $l_i,m_i\leq 1-\frac{p-1}{2}$.
Then  $-\lam,-\mu$ are weights $V_{\om}$, $(-\lam)(s)=(-\mu)(s)$ and $-l_i,-m_i\geq \frac{p-1}{2}-1$. Then the $i$-th coefficient of $-\lam-2\ep_i$ equals $-l_i-2\geq  \frac{p-1}{2}-3\geq - \frac{p-1}{2}$ for $p>3$. So  $-\lam-2\ep_i\in\Om(V_{\om})$; similarly,  $-\mu-2\ep_i\in\Om(V_{\om})$. Then Lemma \ref{hh7} yields   a contradiction. \enp

\bl{2bc} Let $G=B_2\cong C_2$, $s\in T_{reg}$  and let  $V$ be a non-trivial  \ir p-restricted G-module of \hw $\om$. If s is almost cyclic on $V$ then  $m_s(V)\leq 2$.
\el

\bp Let $G=C_2$. As $s$ is almost cyclic on $V$, $\om$ occurs in Tables 1,2. 
For those in Table 1 we have $\dim V^T=2$, and hence $m_s(V)=\dim V^s=\dim V^T$ by Theorem \ref{do1}; then  $\dim V^T=2$ by Table 1.
Those in Table 2 are $\om_1,\om_2$ and, for $p>3$, also  $ \om_1+\frac{p-3}{2}\om_2, \frac{p-1}{2}\om_2$. In the two last cases the result is contained in Lemma \ref{cn5}. The module $V_{\om_1}$ is the natural module for  $G$, and $m_s(V)=1$ by Lemma  \ref{co1}(1).  The module $V_{\om_2}$ is the natural module for $B_2\cong C_2$, so the result follows from Lemma \ref{co1}(2).  
\enp

\begin{rem}\label{bc4} {\rm  If $\dim V=4$ then $m_s(V)=1$.   Indeed, if  $G=C_2$ then either $V$ 
is a Frobenius twist of the natural module or $p=2$ and $V$ is a Frobenius twist of $V_{\om_2}$. The former case is ruled out as $s$ is regular (see Lemma \ref{co1}), the latter case can  be checked straightforwardly. (Say, let $s=\diag(a,b,b\up,a\up)\in Sp_4(q)$, where the diagonal entres are distinct;  then the \eis of $s$ on $V_{\om_2}$ are $a^{\pm 1}b^{\pm 1}$;
if $m_s(V)>0$ then $a^ib^j=1$ for some $i,j\in\{\pm 1\}$, which is a contradiction.) 
If $G=B_2$ then $\om=p^k\om_2$; 
as $V_{\om_2}$ is the natural module for $C_2$, it follows that $m_s(V)=1$ for a regular element $s\in G$.}
\end{rem}

\bigskip
\begin{center}
{\small
   \begin{tikzpicture} 
[node distance=0.5cm, every node/.style={circle,draw=black,align=center}, 
font=\sffamily\bfseries]
 \node(c){B}; 
\node(a)[above left =of c]{$\om_1$}  edge  [label= below right:{1}] [ -> ](c); 
\node(b)[below right =of c]{C} edge [label= above left:{3}] [<-](c);  
\node(d)[below right= of b]{D} edge    [<-] (b);
\node(e)[below right= of d]{F} edge [label= above left:{2}] [<-] (b);
\node(f)[above right= of d]{E} 
 [<-] (d);   
\node(g)[above right= of e]{H} edge  [label= above left:{2}] [<-] (e); 
\node(h)[above right= of f]{G} edge   [<-](f); 
\node(i)[below right= of h]{I} edge    [<-] (h);
\path [->] (g) edge (i);
\path [->] (f) edge (g);
\node(j)[below right= of g]{J} edge [label= above left:{4}]  [<-] (g);
\node(k)[below right= of i]{K} edge [label= above left:{4}]     [<-] (i);
\node(l)[below right= of k]{N} edge [label= above left:{3}] [<-] (k);
\node(m)[above right= of k]{M} edge  [<-] (k);
\node(i)[below right= of h]{I} edge  [label= above left:{2}] [<-] (h);
\node(p)[below right= of m]{P} edge  [label= above left:{3}] [<-] (m);
\node(q)[below right= of j]{L} edge [label= above left:{3}] [<-] (j);
\node(r)[above right= of p]{R} edge  [<-] (p);
\node(t)[above right= of r]{T} edge  [<-] (r);
\node(s)[below right= of l]{Q} edge [label= above left:{1}]   [<-] (l);
\node(o)[below right= of q]{O} edge [label= above left:{1}] [<-] (q);
\node(u)[below right= of p]{S} edge [label= above left:{1}]  [<-] (p);
\node(w)[below right= of r]{U} edge [label= above left:{1}]  [<-] (r);
\node(v)[below right= of t]{V} edge [label= above left:{1}]  [<-] (t);
\node(x)[below right= of w]{W} edge [label= above left:{3}]  [<-] (w);
\node(y)[below right= of v]{X} edge [label= above left:{3}]  [<-] (v);
\node(z)[below right= of y]{Y} edge [label= above left:{4}]  [<-] (y);
\node(z1)[below right= of z]{Z} edge [label= above left:{5}]  [<-] (z);
\node(z2)[below right= of z1]{A} edge [label= above left:{6}]  [<-] (z1);

\node(f)[above right= of d]{E} edge [label= above left:{}] [<-](d);

\path [->] (p) edge (r);  
\path [->]   (j) edge (k);  
\path [->] (q) edge (l);
\path [->] (l) edge (p);
\path [->] (o) edge  (s);
\path [->] (x) edge (y); 
\path [->] (s) edge (w); 
\path [->] (w) edge (v); 
\begin{scope}[nodes = {draw = none}]
\path  (d) -- (f)  node [above right = 6pt] {6};
\path (c) -- (b)  node [below right = 6pt] {4};
\path (b) -- (e)  node [above right = 6pt] {5};
\path (h) -- (i)  node [below left = 9pt] {6};
\path (b) -- (j)  node [above right = 6pt] {6};
\path (c) -- (d)  node [above right = 6pt] {5};
\path (k) -- (m)  node [below left = 9pt] {5};
\path (v) -- (w)  node [below left = 9pt] {4};
\path (m) -- (p)  node [above right = 6pt] {4};
\path (p) -- (t)  node [below left = 9pt] {2};
\path (m) -- (s)  node [below left = 9pt] {6};
\path (m) -- (q)  node [above right = 6pt] {6};
\path (u) -- (x)  node [above right = 6pt] {2};
\path (v) -- (u)  node [below left = 9pt] {5};
\path (t) -- (w)  node [above right = 6pt] {2};
\path (l) -- (p)  node [below left  = 9pt] {5};
\end{scope}

\end{tikzpicture}
}

\medskip
Figure 1. Weight diagram for $V_{\om_1}$ for $E_6$

\end{center}

\bl{e60} Let $G=E_6$ and let $\mu_1,\mu_2,\mu_3,\mu_4$ be distinct weights of $V=V_{\om_1}$. Then $\mu_i-\al,\mu_j-\al\in\Om(V)$ for some root $\al$ and   $i,j\in\{1,2,3,4\}$, $i\neq j$.  \el

\bp We use the weight diagram of $V$ given on Figure 1 above. (This is a more precise version of a diagram provided in \cite{PSV}.) The weights of $V$ are shown as the nodes denoted   by capital letters $A\ld Z$ and $\om_1$,  and two nodes are linked with an arrow with label $i$ if the second one is obtained from the first one by substracting   simple weight $\al_i$. For instance, $F-\al_5=H$, $Z-\al_6=A$. 

All  weights of $V_{\om_1}$ lie in the Weyl group orbit of $\om_1$, so we can assume that $\mu_1=H$. Suppose the contrary. 

Let $\Phi=\Phi(G)$. For a weight $\lam$ of $V$ set $[\lam,\Phi]= \{\nu\in \Om(V): \lam-\al,\nu-\al\in \Om(V)$ for some $\al\in\Phi\cup 0\}$. In addition, for $\al\in \Phi$ we set $[\lam,\al]=\lam \cup \{\nu\in \Om(V):\lam-\al,\nu-\al\in \Om(V)\}$. So $[\lam,\Phi]=\cup_{\al\in \Phi} [\lam,\al]$.
Note that $\lam\in[\lam,\Phi]$.

One can easily observe, using the weight diagram, that $[H,\Phi]$ consists of 17 weights,
and the remaining weights are
$\Delta_1=\{D,G,K,N,Q,R,U,W,Y,A\}$. (It is less obvious that $B,X\in [H,\Phi]$; however, 
$B\in[H,\be]$, where $\be=\al_3+\al_4+\al_5+\al_6 \in\Phi$. Similarly, $Z\in[H,-\gamma]$, where $\gamma=-\al_3-\al_4-\al_5-\al_2 \in\Phi$.) 

\itf $\mu_2,\mu_3,\mu_4\in\Delta_1$. We show that $\mu_i-\al,\mu_j-\al\in\Om(V)$ for some root $\al$ and   $i,j\in\{2,3,4\}$, $i\neq j$. 

Suppose first that $R,Q\in\{\mu_2,\mu_3,\mu_4\}$. Then we can assume that $R=\mu_2,Q=\mu_3$. Then $\Delta_1\subset [R,\Phi]\cup [Q,\Phi]$, so the result follows. 

So we assume that either $R$ or $Q$ are not in $\{\mu_2,\mu_3,\mu_4\}$. 
Observe that 
$D\succ G\succ K\succ N\succ R\succ U\succ W\succ Y\succ A$ and $D\succ G\succ K\succ N\succ Q\succ U\succ W\succ Y\succ A$.  Therefore, we can assume that $\mu_2\succ \mu_3\succ \mu_4$. 

 Suppose that $\mu_4=A$. As $G,K,N,R,U,W,Y\in[A,\Phi]$, 
we have $\mu_2=D,\mu_3=Q$, a contradiction as $D\in [Q,\al_5]$. 

 Suppose that $\mu_4=Y$. As $D,K,N,Q,W\in[Y,\Phi]$, we observe that $\mu_2,\mu_3\in\{G,R,U,W\}$. However, $G-\al_2,R-\al_2,U-\al_2,W-\al_2\in\Om(V)$, which yields a contradiction.

 Suppose that $\mu_4=W$. As $D,G,N,Q,R,U\in[W,\Phi]$, we conclude that $\mu_2,\mu_3\in\{G,K,R,U\}$. Since $G-\al_2,R-\al_2,U-\al_2\in\Om(V)$, we have  a contradiction. 

  Suppose that $\mu_4=U$. As $D,K,R,G\in[U,\Phi]$, 
we get a contradiction. 

 Suppose that $\mu_4=R$. We have  $D,G,K,N\in [R,\Phi]$ so $\mu_2=D,\mu_3=Q$,  a contradiction. 
 
Suppose that $\mu_4=Q$. We have $D,G,K,N\in [Q,\Phi]$ 
so $\mu_2=D,\mu_3=R$,  a contradiction. 

 Finally, suppose that $\mu_4\in\{K,N\}$. Then $[K,\Phi]$ and $[N,\Phi]$ contain $D,G,K,N$; this yields  a contradiction. 
\enp






\begin{lemma}\label{e62} Let $G=E_6$, $V=V_{\om_1}$ and let $M$ be an  \ir $G$-module. Let $s\in T_{reg}$. Suppose that s is almost cyclic  on $M\otimes V$. Then $m_s(M\otimes V)\leq3$. \el

\begin{proof} Suppose the contrary, and let $e$ be an \ei of $s$ on $V$ of \mult at least 4.  

 (i) Suppose first that $M$ is trivial. Let $\mu_i$,  $i=1,2,3,4$,  be distinct weights of $V$ such that $\mu_1(s)=\mu_i(s)$ for every $i$. By Lemma \ref{e60},  there are distinct $i,j\in\{1,2,3,4\}$ and 
$\al\in\Phi(G)$ such that $\mu_i-\al,\mu_j-\al\in\Om(V_{\om_1})$. This contradicts Lemma \ref{hh7}.

(ii) Suppose that $M$ is not trivial. By Lemma \ref{ma2}(3), $s$ is cyclic on $M$ and $V$.
Then $e=a_ib_i$ for $i=1,2,3,4$,
where $a_1,a_2,a_3, a_4$ are distinct \eis of $s$ on $V$ and $b_1,b_2,b_3, b_4$ are distinct \eis of $s$ on $M$.
We have  $a_i=\mu_i(s)$ for some distinct weights $\mu_1,\mu_2,\mu_3, \mu_4$ of $V$. As above, there is a root $\al\in\Phi$ and $i,j\in\{1,2,3,4\}$, $i\neq j$,  such that $\mu_i-\al,\mu_j-\al\in\Omega(V_{\om_1})$. Then $c_i=(\mu_i-\al)(s)$ and $c_j:= (\mu_j-\al)(s)$ are \eis of $s$ on $V=V_{\om_1}$, which are distinct as $s$ is cyclic on $V$. In addition, $c_i\neq a_i$, $c_j\neq a_j$
as otherwise $\al(s)=1$, which  is false as $s$ is regular.  So $c_ib_i=c_jb_j\neq a_ib_i=a_jb_j$.
This is a contradiction as $s$ is almost cyclic  on $M\otimes V$.\end{proof}

The \f example shows that the bound in Lemma \ref{e62} is attained.

\begin{examp} {\rm Let $G=E_6$.  Then $G$ contains a subgroup $H$, say, of type $F_4$. Let $\eta_1,\eta_2,\eta_3,\eta_4$ be the fundamental weights of $F_4$. 
Let $V$ be an \ir $G$-module of \hw $\om_1$. It is well known that $V|_H$ has a composition factor $U$ of \hw $\eta_4$ whose dimension is $\dim V-1$ or $\dim V-2$,
the latter for $p=3$.  Then the other composition factors of $V_{\om_1}|_{H }$ are trivial. Let $s\in F_4$ be a regular semisimple element in general position, that is, the cyclic group $\lan s \ran$ is dense in a maximal torus $S$, say, of $F_4$. Then $s$  is regular and separates the weights of $U$, and hence $s$ is almost cyclic on $V$. 
In addition, $\dim U^s=\dim V^s=2$ or 1, the latter for $p=3$. \itf $\dim V^s=3$. }
\end{examp}

\begin{lemma}\label{e65} Let $G=E_6$, 
let $V$ be an  \ir $G$-module and $s\in T_{reg}$. Suppose that s is almost cyclic  on $ V$. Then $m_s( V)\leq3$ unless the highest weight of $V$ is $p^t\om_2$. \el

\begin{proof} Let $\om$ be the \hw of $V$. Recall that the non-zero weights of $V$ must be of \mult 1 (Theorem  \ref{ag8})
and $s$ is regular by Theorem \ref{c99}.  Therefore, either  
 $\om=p^t\om'$, where $\om'\in\{\om_1,\om_6,\om_2\}$ by \cite[Theorem 2]{TZ2} or $V=V'\otimes M$, where $M$ is a non-trivial \ir $G$-module and the highest weight of $V'$ is $p^t\om_1$ or $p^t\om_6$ (here $\om'\neq \om_2$ by Lemma \ref{ma2}(3)). Note that the \ir \reps with highest weights $p^t\om_1$ and $p^t\om_6$ are dual to each other. So the result follows from Lemma \ref{e62}.
(If $\om'=\om_2$ then $m_s( V)=6$ by Theorem \ref{do1}(1) and   the fact that $V_{\om_2}$ is the adjoint $G$-module.)   
\enp

\bl{ee7}
Let $G=E_7$,  $V=V_{\om_7}$ and $s\in T_{reg}$. Suppose that $s$ is almost cyclic  on $ V$. Then $m_s( V)\leq 2$. \el

  \begin{proof}  
  Let $H$ be a subgroup of $G$ isomorphic to a simple \ag of type $A_7$,  so the maximal tori of $H$ are  maximal  in $G$. Hence we can assume that $T\subset H$. 
Let $\eta_1\ld \eta_7$ be the fundamental weights of $H$. Then the restriction $V|_{H}$ is $V_{\eta_2}+V_{\eta_6}$. The result follows from the \f lemma:
 \end{proof}

 \bl{aa8}
  Let H be a simple \ag of type $A_{n-1}$, $n\geq 4$. Let $U:=V_{\eta_2}\oplus V_{\eta_{n-2}}$. Let $s\in T_{reg}$. Suppose that $s$ is almost cyclic  on U. Then  $m_s(U)\leq 2$. \el

 \begin{proof}
 We can assume that  $g=\diag(a_1\ld ,a_n)\in SL_n(F)$, where $a_1\ld a_n\in F$ and $a_1\cdots a_n =1$.
 Then the \eis of $s$ on $U$ are $a_ia_j,a_k\up a_l\up $ for  $1\leq i,j,k,l\leq n$ and $i<j,k<l$.
  Suppose that  $s$ is not cyclic on $U$. Then either $a_ia_j=a_ka_l$ for $(i,j)\neq (k,l)$ or $a_ia_j=a_k\up a_l\up$ for some $i,j,k,l$.
  Then $(a_ia_j)\up $ is an \ei of $s$ of \mult at least 2, so $a_ia_j=(a_ia_j)\up$ as $s$ is almost cyclic. Then $a_i^2=a_j^{-2}$ and   $a_j=\theta a_i\up$ with $\theta=1$ or $-1$. \itf $\theta$ is an \ei of $s$ on $U$ of \mult at least 2.
  We show that the \mult of $\theta$ is exactly 2. Suppose that contrary. Then $\theta=a_ka_l$ for some $k<l$ with $(k,l)\neq (i,j)$.  Then $a_ja_l=a_i\up a_k\up$ and these are \eis of $s$ on $U$. As  $s$ is almost cyclic,  we must have $a_i\up a_k\up=a_ja_l=\theta=a_ka_l=a_k\up a_l\up $,  whence $a_j=a_k, a_i=a_l$. As $s$ is regular, we  have $j=k, i=l$. 
This is  a contradiction.  
 \end{proof}

\bl{77} Let $G=E_7$, 
let $V$ be an  \ir $G$-module and $s\in T_{reg}$. Suppose that s is almost cyclic  on $ V$. Then $m_s( V)\leq2$ unless the highest weight of $V$ is $p^t\om_1$. 
\el

\bp  Let $\om$ be the \hw of $V$. The non-zero weights of $V$ are of \mult 1 (Theorem  \ref{ag8}) and $s$ is regular by Theorem \ref{c99}.  Therefore, either  
 $\om\in\{p^t\om_7,p^t\om_1\}$ by \cite[Theorem 2]{TZ2} or $V=V'\otimes M$, where $M$ is a non-trivial \ir $G$-module 
and the highest weight of $V'$ is $p^t\om_7$  (this cannot be $p^t\om_1$ by Lemma \ref{ma2}(3)). So the result follows from Lemma \ref{ee7}.
(If $\om=p^t\om_1$ then $m_s( V)=7$ by Theorem \ref{do1}(1) and   the fact that $V_{\om_1}$ is the adjoint $G$-module.)   
\enp

\bl{ff44} Let $G=F_4$, $p=3$,   let $V $ be a p-restricted \ir $G$-module with \hw $\om_4$ and $s\in T_{reg}$. Suppose that s is almost cyclic  on $ V$. Then 
$m_s( V)\leq 2$. \el

\bp In this case $\dim V^T=1$ and hence $V^s=V^T$ by Theorem \ref{do1}. Therefore, if $m_s( V)>1$ and $e$ is the \ei of $s$ on $V$ of \mult $m_s( V)$ then $e=-1$
by Lemma  \ref{tt4}(2). 
 
The non-zero weights of $V=V_{\om_4}$ are short roots, which form a single orbit of the Weyl group of $G$. So we can assume that $\om_4(s)=-1=(-\om_4)(s)$.
Let  $\mu\succ 0$ for $\mu\in\Om(V)$. One can check that $\om_4-\mu$ is a root.  
If  $m_s( V)> 2$ then $\mu(s)=-1=(-\mu)(s)$ for some $0\neq \mu\in\Om(V)$, $\mu\neq \pm \om_4$. So we can assume that  $\mu\succ 0$. Then $\om_4-\mu$
is a root and $(\om_4-\mu)(s)=1$, This is a contradiction as $s$ is regular.\enp

 The case of $G=G_2$ is essentially treated in \cite{TZ22}:  

\begin{lemma}\label{2g3} {\rm \cite[Proposition 4.9]{TZ22}} Let $G=G_2$, $V_i=V_{\om_i}$ for $i=1,2$ and let $s\in T_{reg}$.

$(1)$  Suppose that s is not cyclic on $V_1$. Then $p\neq 2$  and the \eis of s   on $V_1$ are $\{1,-1,-1,-b,b\up, b,-b\up\}$,  where $b\in F,b^2\neq 1$. In particular, s is  almost cyclic  on $V_1$ \ii $b^2\neq  -1$.
 In addition, $\{\al(s): \al$ a long root$\} =\{b,b\up,-b,-b\up, b^2,b^{-2}\}$.

$(2)$  Let $p=3$.  Then  s is almost cyclic  on $V_2$ and cyclic   on $V_1$. More precisely, if
s is not cyclic on $V_2$ then the \eis of s   on $V_2$ are
   $\{1,-1,-1, a^3,-a^3, a^{-3},-a^{-3}\}$ for some $a\in F, a^{4}\neq 1$, and those on $V_1$ are  $\{1,a,a\up, -a,-a\up, -a^{2},-a^{-2}\}$. \el
 
\begin{corol}\label{gk6} Theorem {\rm  \ref{ab9}}  is true for G of type $G_2$.  
\end{corol}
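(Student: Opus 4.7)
Since $G=G_2$ has rank $n=2$ and is neither of type $A_n$ nor $E_6$, cases (1) and (3) of Theorem \ref{ab9} cannot occur; the task is to show that either $m_s(V)=2$ or $V$ is a twist of a $G$-module from Table 1 with $m_s(V)$ equal to the recorded zero-weight multiplicity.

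\textbf{Reduction to the $p$-restricted case.} Write the highest weight $\om$ of $V$ in its $p$-adic expansion $\om=\lam_0+p\lam_1+\cdots+p^k\lam_k$ with $\lam_0\ld\lam_k$ $p$-restricted. By Steinberg's tensor product theorem $V=V_{\lam_0}\otimes V_{\lam_1}^{Fr}\otimes\cdots\otimes V_{\lam_k}^{Fr^k}$, and Lemma \ref{ma2}(3) forces $s$ to be cyclic on each Frobenius twist factor. If at least two of the $\lam_i$ are non-zero, group the factors into a non-trivial tensor decomposition $V=U_1\otimes U_2$. Since $G_2$ has trivial graph automorphism group, every \ir $G_2$-module is self-dual, so on each $U_i$ the matrix of $s$ is similar to its inverse; Lemma \ref{ma2}(4) then yields $m_s(V)\leq 2$. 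Combined with the hypothesis $m_s(V)>1$ this gives $m_s(V)=2$ as required. Using Lemma \ref{ft1}(2) I may therefore henceforth assume $V=V_\lam$ is $p$-restricted.

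\textbf{The $p$-restricted case.} By Theorem \ref{ag8} all non-zero weights of $V_\lam$ have \mult $1$, so $\lam$ appears in the classification of \cite{TZ2} (Tables 1 and 2 for $G_2$). The main candidates are $\lam=\om_1$ and $\lam=\om_2$, together with at most a short list of exceptional entries in small characteristic. For $\lam=\om_1$, Lemma \ref{2g3}(1) gives $m_s(V)\leq 2$ directly. For $\lam=\om_2$: if $p=3$ Lemma \ref{2g3}(2) gives $m_s(V)\leq 2$; otherwise $V_{\om_2}$ is the adjoint module, so $\dim V^T=n=2$, it lies in Table 1, and for $p>e(G_2)=3$ Theorem \ref{do1}(1) forces $V^s=V^T$, giving $m_s(V)=2$ in agreement with case (2) of Theorem \ref{ab9}. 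Any remaining low-characteristic entries of Tables 1 or 2 for $G_2$ are dispatched by a short direct weight analysis: Lemma \ref{hh7} rules out any coincidence $\mu(s)=\nu(s)$ of distinct weights with $\mu-\nu$ a root (using the regularity of $s$), and for $G_2$ this obstruction identifies at most two weights under $s$.

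\textbf{Main obstacle.} The delicate points are (i) verifying that Lemma \ref{ma2}(4) really applies to the tensor-product reduction -- this hinges on the self-duality of every irreducible $G_2$-module, which holds because the Weyl group of $G_2$ contains $-\Id$; and (ii) the small-characteristic cases ($p=2,3$) for $V_{\om_2}$, where Theorem \ref{do1} and Lemma \ref{wtlattice} do not apply directly. Both issues are already handled by Lemma \ref{2g3} and a straightforward inspection of the handful of admissible weights, so the final argument is short.
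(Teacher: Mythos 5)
Your proposal is correct and follows essentially the same route as the paper: reduce to the $p$-restricted case via Steinberg's theorem together with self-duality of $G_2$-modules and Lemma \ref{ma2}(4), then handle $\om_1$ by Lemma \ref{2g3}(1), $\om_2$ with $p=3$ by Lemma \ref{2g3}(2), and $\om_2$ with $p\neq 3$ by observing that the non-zero weights are roots so regularity forces $\dim V^s=\dim V^T=2$. The only caveat is that your appeal to Theorem \ref{do1}(1) is not available at $p=2$ (since $e(G_2)=3$), but the direct argument you fall back on there is exactly the paper's and suffices.
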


\bp Suppose first that $V$ is $p$-restricted. Then, by Theorem \ref{ag8},   we need to deal with $FG$-modules $V$ of \hw $\om\in\{\om_1,\om_2\}$.
The case $\om=\om_1$ is treated in Lemma \ref{2g3}(1). Let $\om=\om_2$. The case with $p=3$ is examined in Lemma \ref{2g3}(2).
Let $p\neq 3$. Then $\dim V^T=2$ and  hence $\dim V^s\geq 2$. If $\dim V^s> 2$ then $s$ is not regular as the non-zero weights of $V$ are roots.
This contradicts Theorem \ref{c99}.   

Consider then the case where $V$ is not $p$-restricted. Then the result follows from Lemma \ref{ma2}(4).\enp

\subsection{Groups  of type $A_n$}

Let $G$ be a simple \ag of type $A_n$ and  $\om_1\ld \om_n$  the fundamental weights of $G$. As above, for a dominant weight $\om$ we denote by $V_\om$
an \ir $G$-module with \hw $\om$.

\subsection{Tensor-indecomposable modules}

\bl{ep9} Let $V=V_{\om_i}$, $i\leq (n+1)/2$, and $n+1<(k+1)i$ for some integer $k>0$. Suppose that $s\in T_{reg}$ and s is almost cyclic on V. Then $m_s(V)\leq k$. In particular, $m_s(V)\leq (n+1)/2$ for $i>1$. 
\el

\bp The weights of $V$ are $\ep_{j_1}+\cdots +\ep_{j_i}$ for $1\leq j_1<...<j_i\leq n+1$. If $\lam=\ep_{j_1}+\cdots +\ep_{j_i}$ then we set ${\rm supp}(\lam)=\{j_1\ld j_i\}$. Suppose the contrary, and let
$\mu_1\ld \mu_{k+1}$ be distinct weights of $ V$ such that $\mu_1(s)=\cdots = \mu_{k+1}(s)$.  As $n+1<(k+1)i$, it follows that there is $d\in \{1\ld n+1\}$ and a pair $\mu_r,\mu_t$ with $1\leq r<t\leq k+1$ such that $d\in {\rm supp}(\mu_r)\cap {\rm supp}(\mu_t)$.  
In addition, $|{\rm supp}(\mu_r)\cup {\rm supp}(\mu_t)|\leq2i-1$ and $2i-1\leq n$ by assumption. So $l\notin {\rm supp}(\mu_r)\cup {\rm supp}(\mu_t)$ for some $l\in\{1\ld n+1\}$. Set $\al=\ep_d-\ep_l$. \itf  $\mu_r-\al,\mu_t-\al$ are weights of $V$. This contradicts Lemma \ref{hh7}.

For the additional claim, take $k=(n+1)/2$. Then $(k+1)i>n+1$ as $i>1$. So the claim follows from the above.
\enp

\bl{an4}
 Let $G=A_{n},n>2$, $M=V_{\om_i}$ with $2\leq i\leq (n+1)/2$, $M_1=V_{\om_1}$, $M_2=V_{\om_{i-1}}$. Let $\lam,\lam'\in \Om(M_2)$, $\lam\neq\lam'$. Then there are distinct weights $\mu,\mu'\in \Om(M_1)$ such that $\lam+\mu,\lam+\mu',\lam'+\mu,\lam'+\mu'\in\Om(M)$. In addition, $\Om(M)\subset \Om(M_1)+\Om(M_2)$. \el

\begin{proof}  The weights of $M_1$ are $\ep_1\ld \ep_{n+1}$ and those of $M_2$ are $\ep_{j_1}+\cdots+\ep_{j_{i-1}}$ for $1\leq j_1<\cdots <j_{i-1}\leq n+1$.
We can assume (using the Weyl group) that $\lam=\ep_1+\cdots+\ep_{i-1}$ and $\lam'=\ep_{j_1}+\cdots+\ep_{j_{i-1}}$ with $j_1<\cdots<j_{i-1}\leq 2i-2\leq n-1$. Then we take $\mu=\ep_{n},\mu'=\ep_{n+1}$.

For the additional claim, we have $\Om(V_{\om_1})+\Om(V_{\om_{i-1}})=\Om(V_{\om_1+\om_{i-1}})\supset \Om(V_{\om_i})$ as $\om_{i}\prec\om_1+\om_{i-1}$, see Lemma \ref{wtlattice}. Whence the result. \end{proof}

\bl{aan}
Let $G=A_{n},n>2,$ and let $s\in  T_{reg}$.

$(1)$ If s is almost cyclic  on $V_{\om_i}$ for $2\leq  i\leq (n+1)/2$,
 then s is cyclic on $V_{\om_j}$ for every $j<i$.

 $(2)$ If s is almost cyclic  on $V_{a\om_1}$ for $1<a\leq p-1$, then s is cyclic on  $V_{b\om_1}$ for $1\leq b< a$. \el

\begin{proof} Set $M_1=V_{\om_1}$, and in case (1) set $M=V_{\om_i}$,   $M_2=V_{\om_{i-1}}$, in case (2) set $M=V_{a\om_1}$, $M_2=V_{(a-1)\om_1}$.

$(i)$ $s$ separates the weights of $M_2$.

In case (2) this follows from Lemma \ref{wtlattice}(2) and Lemma \ref{td2}(1).

In case (1) suppose the contrary. Then there are distinct weights $\lam_1,\lam_2$  of $M_2$ such that $\lam_1(s)=\lam_2(s)$.
By Lemma \ref{an4}, there are distinct weights  $\mu_1,\mu_2$ of $M_1$ such that $\lam_k+\mu_l$ are weights of $M$ for $k,l\in\{1,2\}$.   Then $(\lam_1+\mu_1)(s)=(\lam_2+\mu_1)(s)$ and $(\lam_1+\mu_2)(s)=(\lam_2+\mu_2)(s)$. As $s$ is almost cyclic  on $M$, it follows that $(\lam_1+\mu_1)(s)=(\lam_1+\mu_2)(s)$,  and then $\mu_1(s)=\mu_2(s)$. This is a contradiction, as every regular semisimple element is cyclic on  $V_{\om_1}=M_1$.

$(ii)$ $s$ is cyclic on $M_2$.

Indeed, the weights of $V_{\om_j}$ and of $V_{b\om_1}$ are well known to be of \mult 1, so the claim follows from $(i)$.

$(iii)$ The result follows by induction on $i$ in case (1) and on $a$ in case (2).\end{proof}

We use Lemma \ref{an4} to prove

\bl{a55} Let $G=A_{n}$, $n>1$, $M_1=V_{\om_1}$, $M=V_{a\om_i+(p-1-a)\om_{i+1}}$ with $1\leq i\leq (n+1)/2$,  $M_2=V_{\om_{i-1}+(a-1)\om_i+(p-1-a)\om_{i+1}}$ with $1\leq a\leq p-1$. (If $i=1$ then $\om_{i-1}$ is interpreted as the zero weight.)

$(1)$ Let $\lam,\lam'\in \Om(M_2)$, $\lam\neq\lam'$. Then there are distinct weights $\mu,\mu'\in \Om(M_1)$ such that $\lam+\mu,\lam+\mu',\lam'+\mu,\lam'+\mu'\in\Om(M)$. In addition, $\Om(M)\subseteq\Om(M_1)+\Om(M_{2})$.

$(2)$ If s is almost cyclic on M then s separates the weights of $M_2$.  \el

\begin{proof} (1) We have $\Om(M)=\Om(V_{\om_{i}})+\Om(V_{(a-1)\om_i+(p-1-a)\om_{i+1}})$ by Lemma \ref{wtlattice}, and $\Om(V_{\om_i})\subset \Om(V_{\om_{1}})+\Om(V_{\om_{i-1}})$ by Lemma \ref{an4}. So $$\Om(M)\subset \Om(M_1)+\Om(V_{\om_{i-1}})+\Om(V_{(a-1)\om_i+(p-1-a)\om_{i+1}})=\Om(M_1)+\Om(M_{2})$$ as $\Om(M_2)=\Om(V_{\om_{i-1}})+\Om(V_{(a-1)\om_i+(p-1-a)\om_{i+1}})$ again by Lemma \ref{wtlattice}.

Let $i>1$. Then we can write $\lam=\beta+\gamma$, $\lam'=\beta'+\gamma'$ with $\beta,\beta'\in \Om(V_{\om_{i-1}})$, $\gamma,\gamma'\in \Om(V_{(a-1)\om_i+(p-1-a)\om_{i+1}})$. By Lemma \ref{an4}, there are distinct 
weights $\mu,\mu'\in\Om(V_{\om_1})$ such that $\mu+\beta,\mu'+\beta,\mu+\beta',\mu'+\beta'\in\Om(V_{\om_i})$. As $\Om(V_{\om_i})+\Om(V_{(a-1)\om_i+(p-1-a)\om_{i+1}})=\Om(V_{a\om_i+(p-1-a)\om_{i+1}})$,
we have $\mu+\beta+\gamma,\mu'+\beta+\gamma,\mu+\beta'+\gamma',\mu'+\beta'+\gamma'\in\Om(M)$, and the result follows.

Let $i=1$ so $V_{\om_{i-1}}$ is   the trivial module. Then $\Om(M)=\Om(M_1)+\Om(M_{2})$, and
 the claim is obvious.

(2) We mimic the proof of Lemma \ref{aan}. Suppose the contrary, and let
$\lam_1,\lam_2$  be distinct weights of   $M_2$ such that $\lam_1(s)=\lam_2(s)$. By the above, there are distinct weights $\mu_1,\mu_2$   of $M_1$ such that $\lam_j+\mu_k$ are weights of $M$ for $i,k\in\{1,2\}$. Then $(\lam_1+\mu_j)(s)=(\lam_2+\mu_j)(s)$ for $j=1,2$. As $s$ is almost cyclic  on $M$, it follows that $(\lam_1+\mu_1)(s)=(\lam_1+\mu_2)(s)$,  and then $\mu_1(s)=\mu_2(s)$. This is a contradiction, as $s$ is  regular.  \end{proof}

\begin{propo}\label{pr5} For $G=A_{n}$  let $M\in\Irr(G)$ be  p-restricted and
 $s\in T_{reg}$. Suppose that s is almost cyclic on M. Then $m_s(M)\leq n+1$.\end{propo}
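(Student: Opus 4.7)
\textbf{Proof plan for Proposition \ref{pr5}.}

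The strategy is a pigeonhole argument exploiting that $|\Omega(V_{\omega_1})|=n+1$. By Theorem \ref{ag8}, the hypothesis that $s$ is almost cyclic on $M$ forces every non-zero weight of $M$ to have multiplicity one, so the $p$-restricted highest weight $\omega$ of $M$ belongs to the short list given by Tables $1$ and $2$. Up to the outer automorphism $\omega_i\leftrightarrow\omega_{n+1-i}$ (which preserves the almost cyclic property via duality), the non-trivial cases to be considered split into three families: (i) $\omega=\omega_i$ with $2\le i\le (n+1)/2$; (ii) $\omega=a\omega_1$ with $2\le a\le p-1$; (iii) $\omega=a\omega_i+(p-1-a)\omega_{i+1}$ with $1\le i\le (n+1)/2$ and $1\le a\le p-1$. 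A small number of remaining weights in the classification (notably the adjoint weight $\omega_1+\omega_n$) will have to be handled directly.

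For cases (i)--(iii), I would set $M_1=V_{\omega_1}$ and choose an auxiliary irreducible module $M_2$: namely $M_2=V_{\omega_{i-1}}$ in case (i) (using Lemma \ref{an4}), $M_2=V_{(a-1)\omega_1}$ in case (ii) (using Lemma \ref{aan}(2)), and $M_2=V_{\omega_{i-1}+(a-1)\omega_i+(p-1-a)\omega_{i+1}}$ in case (iii) (using Lemma \ref{a55}). In each case the cited lemma supplies two key properties: (a) $\Omega(M)\subseteq\Omega(M_1)+\Omega(M_2)$, and (b) $s$ separates the weights of $M_2$ whenever $s$ is almost cyclic on $M$.

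Given (a) and (b), the bound is immediate by pigeonhole. Suppose for contradiction that some eigenvalue $e$ of $s$ on $M$ has multiplicity at least $n+2$, so that distinct weights $\mu_1,\ldots,\mu_{n+2}\in\Omega(M)$ satisfy $\mu_j(s)=e$ for every $j$. Write $\mu_j=\nu_j+\lambda_j$ with $\nu_j\in\Omega(M_1)$, $\lambda_j\in\Omega(M_2)$ using (a). Since $|\Omega(M_1)|=n+1$, there exist $i\neq j$ with $\nu_i=\nu_j$. Then $\lambda_i-\lambda_j=\mu_i-\mu_j\neq 0$, whereas $\lambda_i(s)=e\,\nu_i(s)^{-1}=e\,\nu_j(s)^{-1}=\lambda_j(s)$, contradicting property (b). Hence $m_s(M)\le n+1$.

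The main obstacle is completeness of the case analysis: one must verify that every $p$-restricted $\omega$ permitted by Theorem \ref{ag8} for $G=A_n$ either belongs to families (i)--(iii) (possibly after applying the duality $\omega_i\leftrightarrow\omega_{n+1-i}$) or else admits a direct argument. For the adjoint weight $\omega_1+\omega_n$, one observes that $\Omega(V)=\Phi(G)\cup\{0\}$ with each root of multiplicity one and $\dim V^T\le n$; since $s$ is regular it separates the roots, and $m_s(V)\le\dim V^T+1\le n+1$ follows. Any further sporadic case in Tables $1$ and $2$ would be treated analogously by direct inspection of its weight structure.
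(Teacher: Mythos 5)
Your core argument coincides with the paper's own proof: the same choice of $M_1=V_{\omega_1}$ and the same auxiliary modules $M_2$ in each of the three families, the same appeals to Lemmas \ref{an4}, \ref{aan} and \ref{a55} for the two properties $\Omega(M)\subseteq\Omega(M_1)+\Omega(M_2)$ and ``$s$ separates the weights of $M_2$'', and the identical pigeonhole step on the $n+1$ weights of $V_{\omega_1}$. That part is correct and complete as far as it goes.

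The flaw is in your treatment of the leftover Table~1 weights. You assert that for $V=V_{\omega_1+\omega_n}$ ``since $s$ is regular it separates the roots''. This is false: regularity of $s$ only means $\alpha(s)\neq 1$ for every root $\alpha$; the condition that distinct roots take distinct values on $s$ is what the paper calls \emph{strict} regularity, and a regular element need not satisfy it. Consequently your inequality $m_s(V)\leq\dim V^T+1$ is not justified as written (and the ``$+1$'' is spurious in any case). The correct route, which is the one the paper takes, is to observe that when the zero-weight multiplicity of $M$ exceeds $1$, the eigenvalue $1$ of $s$ already has multiplicity greater than $1$ (regularity gives $\alpha(s)\neq1$, so the $1$-eigenspace is exactly $V^{\langle s\rangle}\supseteq V^T$), hence by almost cyclicity it is the unique repeated eigenvalue and $m_s(M)=\dim V^{\langle s\rangle}=\dim V^T\leq n+1$ by Theorem \ref{do1}(1) and Table~1. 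This also disposes of the case $G=A_3$, $\omega=2\omega_2$, $p>3$ from Table~1, which does not belong to your families (i)--(iii) and which your ``analogous direct inspection'' leaves unexamined. (The borderline case $\omega_1+\omega_2$ for $A_2$, $p=3$, where $\dim V^T=1$, is already of the form $c\,\omega_i+(p-1-c)\omega_{i+1}$ and so is covered by your family (iii).)
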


\begin{proof} Suppose first that the weight zero multiplicity of $M$ is greater than $1$. Suppose the contrary; as $s$ is almost cyclic, only the \ei 1 \mult is greater than 1. Then the result follows from Theorem \ref{do1}(1) and Table 1.

Suppose that the weight zero multiplicity of $M$ is at most $1$. By Theorem \ref{ag8},  all weight multiplicities of $M$ equal $1$. By Table 2, the highest weight $\om$ of $M$ is in the following list: $0,\om_i$ $(1\leq i\leq n)$, $a\om_1,$  $a\om_n$, $a\om_i+(p-1-a)\om_{i+1}$, where $1\leq i\leq n-1$ and $1\leq a<p$.

 Note that we assume  $\om\neq 0,\om_1,\om_{n}$ as these cases are trivial.  
   Since the list of the \ei multiplicities of $M$ is the same as for the dual of $M$, it suffices to deal with $\om_i,a\om_1,a\om_i+(p-1-a)\om_{i+1}$ for $i\leq (n+1)/2$.

Suppose the contrary.  Let $M_1=V_{\om_1}$ and

$$M_2=\begin{cases} V_{\om_{i-1}} &if \,\,\om=\om_i, i>1;\cr
V_{(a-1)\om_{1}}&if \,\, \om=a\om_1, a>1;\cr
V_{\om_{i-1}+(a-1)\om_i+(p-1-a)\om_{i+1}}& if \,\, \om=a\om_i+(p-1-a)\om_{i+1}. \end{cases} $$
(If $i=1$ then $\om_{i-1}$ is understood to be the zero weight.)
By Lemmas \ref{aan} and \ref{a55}(2), $s$ separates the weights of $M_2$.

 Note that $M$ is a constituent of $M_1\otimes M_2$. Indeed, $V_{\om_{i}}$ is a constituent of $V_{\om_1}\otimes V_{\om_{i-1}}$
 and  $M_2$ is a constituent of $V_{\om_{i-1}}\otimes V_{(a-1)\om_i+(p-1-a)\om_{i+1}}$, whence the claim.

 Furthermore,  $\Om(M)\subseteq \Om(M_1)+\Om(M_2)$ (see Lemma \ref{a55}). As all \ei multiplicities of
 $M$ equal 1, there are $n+2$ distinct weights $\nu_1\ld \nu_{n+2}$ of $M$ such that $\nu_i(s)=e$. Then we can write
 $\nu_i=\lam_i+\mu_i$, where $\lam_i,\mu_i$ are weights of $M_1,M_2$, respectively, for $i=1\ld n+2$.
 Then $e=\lam_i(s)\mu_i(s)$.
 As $M_1=V_{\om_1}$ has exactly $n+1$ distinct weights, we have $\lam_i=\lam_j$ for some $i,j\in\{1\ld n+1\}$ with $i\neq j$.
 Then $\mu_i\neq \mu_j$ as otherwise $\nu_i=\lam_i+\mu_i=\lam_j+\mu_j=\lam_j+\mu_j=\nu_j$, which is false. However, $\lam_j(s)\mu_j(s)=e=\lam_i(s)\mu_i(s)=\lam_j(s)\mu_i(s)$, whence $\mu_i(s)=\mu_j(s)$.
 This is a contradiction as $s$ separates the weights of $M_2$. \end{proof}

The bound in Proposition \ref{pr5} is not probably sharp in general, at least, for $G=A_n$ we have no example of a $p$-restricted $G$-module $M$ with $m_s(M)>n$ for $s$ regular and almost cyclic on $M$. Moreover, the equality   $m_s(M)=n$ probably holds only for $M$ adjoint, and we have no sharp bound for other $p$-restricted modules. 
 In Lemmas \ref{ep9} and \ref{we4} we consider some special cases.
  Example \ref{ex5} shows that, for $G=A_n$, $n>3$ odd, there exists a regular semisimple element $s\in G$
and a $p$-restricted \ir $G$-module $V$ which is not a twist of the natural or adjoint $G$-module such that $s$ is almost cyclic on $V$ and $m_s(V)=(n+1)/2$.  (If $V$ is not a twist of a $p$-restricted $G$-module  then there are examples with $m_s(V)=n+1$, see Example \ref{e2x}.)

\bl{we4} Let $G=A_n=SL_{n+1}(F)$ and $p\neq 2$.  If $s\in T_{reg}$  then $m_s(V_{2\om_1})\leq (n+3)/2$. 
\el

\bp By \cite[Theorem 1.2]{TZ22}, if $s$ is almost cycic on $V_{2\om_1}$ then $s$ is strictly regular.  Suppose that  $\mu_1\ld \mu_l\in\Om(V_{2\om_1})$ be distinct weights such that $\mu_1(s)=\cdots =\mu_l(s)=e$. The weights of $V$ are $2\ep_i$ ($i=1\ld n+1$) and $\ep_i+\ep_j$ $(i,j\in\{1\ld n+1\}, i<j)$; this can be recorded as 
$\ep_i+\ep_j$ $(i,j\in\{1\ld n+1\}, i\leq j)$.
Suppose first that  $\mu_i ,\mu_j,\mu_k\in \{2\ep_1\ld 2\ep_{n+1}\}$ for some distinct $i,j,k\in\{1\ld l\}$. By reordering of $\mu_1\ld \mu_l$ we can assume that $\mu_1,\mu_2,\mu_3\in \{2\ep_1\ld 2\ep_{n+1}\}$, 
and let $\mu_i=2\ep_{r(i)}$ for $i=1,2,3$. Then  for $i,j\in\{1,2,3\}$ we have $1=(\mu_i-\mu_j)(s)=(2\ep_{r(i)}-2\ep_{r(j)})(s)=\ep_{r(i)}(s)^2\ep_{r(j)}(s)^{-2}=(\ep_{r(i)}(s)\ep_{r(j)})(s)^{-1})^2, $ whence $\ep_{r(i)}(s)\ep_{r(j)}(s)^{-1}=\pm1$. 
In fact, $\ep_{r(i)}(s)\ep_{r(j)}(s)^{-1}=-1$, as $\ep_{r(i)}(s)=\ep_{r(j)}(s)$ implies $s\notin T_{reg}$. So $\ep_{r(i)}(s)=-\ep_{r(j)}(s)$. However, this cannot happen for three choices $i,j\in\{1,2,3\}, i\neq j$. Therefore, at most two
weights $\mu_i$ $(1\leq i\leq l)$ are in the set  $2\ep_1\ld 2\ep_{n+1}$. 

Note that $\Om(V_{\om_2})=\Om(V_{2\om_1}) \setminus \{2\ep_1\ld 2\ep_n\}$.
Let $l'$ be the number of weights $\mu_1\ld \mu_l$ that lie in $\Om(V_{\om_2})$. 
By the above  $l'\geq l-2$.    
By Lemma \ref{ep9}, we have $l'\leq m_s(V_{\om_2})\leq (n+1)/2$. If $l'=l$ or $l-1$ 
then the result follows. Suppose that $l'=l-2$, and we can assume that $\mu_1\ld \mu_{l-2}\in \Om(V_{\om_2})$ and that $\mu_{l-1}=2\ep_n$, $\mu_{l}=2\ep_{n+1}$. 
Then $\ep_n,\ep_{n+1}$ do not lie in  the support of any $\mu_t$ with $t=1\ld l-2$.
Indeed, otherwise $(\ep_i+\ep_k)(s)=2\ep_k(s)$ for $k\in\{n,n+1\}$ and some $i$,
whence $(\ep_i-\ep_k)(s)=1$,  which is false as $s$ is regular. 
Therefore, we can apply  Lemma \ref{ep9} to $G=A_{n-2}$ in place of $A_n$
to conclude that $l-2\leq (n-1)/2$, and hence $l\leq (n+3)/2$, as required. 
(Alternatively, we can obtain this conclusion from the proof of Lemma \ref{ep9} as 
there are at most $(n-1)/2$ weights $\mu_1\ld \mu_l$ whose supports are pairwise disjoint.) 
\enp

\begin{examp}\label{ex5} {\rm 
$(1)$ Let $G=SL_{2m}(F)$ and $s=\diag(a_1\ld a_m,a_m\up\ld a_1\up)\in G$, where the elements $1,a_1\ld a_m\in F$ are algebraically independent and non-zero. Let $\om=\om_2$. Then $V_\om^T=0$, $s$ is almost cyclic on $V_\om$ and $\dim V_\om^s=m=(n+1)/2$, where $n=2m-1$ is  the rank of $G$.

If $p\neq 2$ and $s=\diag(-a_1\ld -a_m,a_m\up\ld a_1\up)$ then $-1$ has \mult $m=(n+1)/2$ and the other \eis of $s$ on $V_{\om_2}$ are of \mult $1$.

$(2)$ Let $G=SL_{2m+1}(F)$, $p\neq 2$ and $s=\diag(1,a_1\ld a_m,a_m\up\ld a_1\up)\in G$, where the elements $a_1\ld a_m\in F$ are as above. Let $\om=2\om_1$. Then $V_\om^T=0$ and $\dim V_\om^s=m+1=(n+2)/2$, where $n=2m$ is the rank of $G$. In this case $s\in H\cong B_{m}$.

$(3)$ Let $G=SL_{2m}(F)$ and $s=\diag(b,-b,b^2a_1, a_1\up\ld b^2a_{m-1},a_{m-1}\up)\in G$, where the elements 
$a_1\ld a_{m-1}\in F$ are as above and $b^{n+1}=(-1)^{(n+1)/2}$, $b\in F$. Then the \ei $b^2$ has \mult $m+1=(n+3)/2$ on $V_{2\om_1}$ and the other \eis have \mult $1$.
(Here $n=2m-1$ is the rank of $G$.)}
\end{examp}

\subsection{Tensor-decomposable modules: special cases}

 Proposition \ref{pr5} states that Theorem \ref{ab9} is true for $p$-restricted \ir $A_n$-modules.
 We next prove it for tensor-decomposable modules.

\begin{lemma}\label{gc3}  {\rm \cite[Lemma 7]{TZ21}} Let $s\in T$ be a
non-central element. Let $\om$ be a dominant weight which is not $p$-restricted and
not of the form $p^k\mu$ for $\mu$ a $p$-restricted dominant weight. If $s$ is
almost cyclic on $V_\om$  then all weights of $V_\om$ are of \mult $1$.\el


 
For a $G$-module $V$ and $s\in G$ denote by $\deg_V(s)$ the degree of the minimum \po of $\rho(s)$,
where $\rho$ is the \rep afforded by $V$. If $s$ is semisimple then $\deg_V(s)=|E_V(s)|$
 is the number of distinct \eis of $s$ on $V$. Note that $s$ is cyclic on $V$ \ii $\deg_V(s)=\dim V$.

For $A,B\subset  F^\times$ we set $AB=\{ab: a\in A,b\in B\}$.

\bl{bb1} Let $M,M_1,M_2,V$ be non-trivial G-modules and $s\in T$.
Suppose that s is not scalar on both $M_1 $, $M_2$,  $E_M(s)=E_{M_1}(s)E_{M_2}(s)$ and
 $s$ is almost cyclic  on $M\otimes V$.

$(1)$   $\deg_{M_i\otimes V}(s)=\deg_{M_i}(s)\cdot \dim V$ for   $i=1,2$.

$(2)$ the \ei multiplicities of s on $M\otimes V$ do not exceed $d=\dim M_1$.\el

\begin{proof}  (1) We can assume $i=1$. By Lemma \ref{ma2}, $s$ is cyclic on $M$ and $V$. Then
 $\dim V=\deg_V(s)$. 
Suppose the contrary, that $\deg_{M_1\otimes V}(s)<\deg_{M_1}(s)\cdot \dim V$. Then there are distinct \eis $\mu,\mu'\in E_{M_1}(s)$  and $\nu, \nu'\in E_V(s)$  such that $\mu\nu=\mu'\nu'$. Then $\mu\lam\nu=\mu'\lam\nu'\in E_{M\otimes V}(s)$ for every  $\lam\in E_{M_2}(s)$. So $s$ is not almost cyclic  on $M\otimes V$ unless $s$  is scalar on $M_2$,   which is not the case by assumption. 

(2)  This follows from Lemma \ref{ma2}(1) as the \ei multiplicities of $s$ on $V$ equal 1. 
\end{proof}

To illustrate the use of Lemma \ref{bb1}, we state the following:

\begin{corol}\label{tpa} Let $G=A_n$, $\om=a\om_1$ or $a\om_1+(p-1-a)\om_2$, where $1<a<p$. Let $M=V_{\om}$ 
and let $V$ be a non-trivial \ir G-module. Let $s\in (T\setminus   Z(G))$. Suppose that s is almost cyclic  on $M\otimes V$. Then the \ei multiplicities of s on $M\otimes V$ do not exceed $n+1$.\end{corol}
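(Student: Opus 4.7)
The approach is to apply Lemma \ref{bb1}(2) with $M_1 = V_{\om_1}$ (the natural $G$-module, of dimension $n+1$) and a carefully chosen $M_2$. Concretely, take $M_2 = V_{(a-1)\om_1}$ when $\om = a\om_1$, and $M_2 = V_{(a-1)\om_1 + (p-1-a)\om_2}$ when $\om = a\om_1 + (p-1-a)\om_2$. Since $1<a<p$, in both cases $M_2$ is a non-trivial $p$-restricted irreducible $G$-module of dimension greater than $1$, and the highest weight of $M$ is precisely the sum of the highest weights of $M_1$ and $M_2$.

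The central step is to verify the hypothesis $E_M(s) = E_{M_1}(s)\,E_{M_2}(s)$ of Lemma \ref{bb1}, which I reduce to the weight-set identity $\Om(M) = \Om(M_1) + \Om(M_2)$. In both cases $\om_1$ is $p$-restricted and the highest weight of $M$ is also $p$-restricted (its coefficients lie in $[0,p-1]$ since $1<a<p$). As $e(G)=1<p$ for $G$ of type $A_n$, Lemma \ref{wtlattice}(2) applies with $\lam=\om_1$ and $\mu$ the highest weight of $M_2$, yielding $\Om(M) = \Om(V_{\om_1}\otimes M_2) = \Om(M_1) + \Om(M_2)$.

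Next I verify that $s$ acts non-scalarly on both $M_1$ and $M_2$. For $M_1$: since $s \in T \setminus Z(G)$, $s$ cannot be scalar on the natural module, for otherwise $s$ would be a scalar matrix in $\SL_{n+1}$ and hence central. For $M_2$: by Lemma \ref{ma2}(3) applied to $s$ almost cyclic on $M \otimes V$, $s$ is cyclic, in particular almost cyclic, on $M$. Then Lemma \ref{aan}(2) in the first case, and Lemma \ref{a55}(2) (with $i=1$) in the second case, show that $s$ separates the weights of $M_2$; since $\dim M_2 > 1$, this forces $s$ to have at least two distinct eigenvalues on $M_2$, hence to be non-scalar there.

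With every hypothesis of Lemma \ref{bb1} now verified, part (2) of that lemma immediately yields the bound $\dim M_1 = n+1$ on the eigenvalue multiplicities of $s$ on $M \otimes V$, as required. The main technical point is the weight-set identity, but this is automatic from Lemma \ref{wtlattice}(2) given the $p$-restrictedness; beyond that, the proof is essentially a routine assembly of the machinery already developed, with the only bookkeeping being the correct identification of $M_2$ in each of the two cases of $\om$.
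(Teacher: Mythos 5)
Your proof is correct and follows essentially the same route as the paper: the same choice of $M_1=V_{\om_1}$ and $M_2=V_{(a-1)\om_1}$ or $V_{(a-1)\om_1+(p-1-a)\om_2}$, the identity $\Om(M)=\Om(M_1)+\Om(M_2)$ via Lemma \ref{wtlattice}(2), and then Lemma \ref{bb1}(2). Your extra verification of the non-scalar hypothesis (which implicitly uses that $s$ is automatically regular by Lemma \ref{ss2}(2), as the paper remarks just after the corollary) is a welcome addition that the paper leaves tacit.
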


\begin{proof} Set $M_1=V_{\om_1}$ and $M_2=V_{(a-1)\om_1}$ or $V_{(a-1)\om_1+(p-1-a)\om_2}$, respectively. Then $\Om(M)=\Om(M_1)+\Om(M_2)$ by Lemma \ref{wtlattice}. So the result follows from Lemma \ref{bb1}.\end{proof}

Note that Corollary \ref{tpa} is not implied by Proposition \ref{pr5}. In addition, here and  in Lemmas \ref{an2a}, \ref{r44}  below we do not assume $s$ to be regular as this  automatically holds due to Lemma \ref{ss2}(2) when $s\notin Z(G)$.

\bl{an2a} Let $M,M_1,M_2,V$ be non-trivial G-modules. Let $s\in T\setminus Z(G)$. Suppose that

$(i)$ for every $f,f'\in E_{M_2}(s)$ there exist distinct \eis $e,e' \in E_{M_1}
(s)$ such that $ef,ef',e'f,e'f' \in E_M(s);$ 

$(ii)$ $E_M(s)\subseteq E_{M_1}(s)\cdot E_{M_2}(s);$

$(iii)$ 
$s$ is almost cyclic  on $M\otimes V$.

\noindent Then  $\deg_{M_2\otimes V}(s)=\deg_{M_2}(s)\cdot \dim V$, and   $m_s(M\otimes V)\leq
\dim M_1$.\el

\begin{proof}  By Lemma \ref{ma2}, $s$ is cyclic  on $V$, so $\dim V=\deg_V(s)$.  To prove the first statement of  the lemma, suppose (the contrary), that $\deg_{M_2\otimes V}(s)<\deg_{M_2}(s)\cdot \dim V$. Then there are distinct \eis $f,f' \in E_{M_2}(s)$ and \eis $v,v'\in E_V(s)$  such that $fv=f'v'$.

By $(i)$, there are distinct \eis $e,e'\in E_{M_1}(s)$ such that $ef,ef',e'f,e'f' \in E_M(s),$  and hence
$efv = ef'v',e'fv = e'f'v' \in E_{M\otimes V}(s)$. So $a:=efv $ and $b:=e'fv $ are distinct \eis of $s$ on $M\otimes V$, each of \mult at least 2. 
As $s$ is almost cyclic  on $M\otimes V$,    we must have $a=b$,      whence  $e=e'$, a contradiction.

  Turn to the second statement.   Set $d=\dim M_1.$ Suppose the contrary, that the \mult of some \ei $u$ of $s$ on $M\otimes V$ is greater than  $d$. By Lemma \ref{ma2}(3), $s$ is cyclic on $M$ and $V$, so there are distinct \eis $w_1\ld w_{d+1}$ of $s$ on $M$ and distinct \eis $v_1\ld v_{d+1}$ of $s$ on $V$ such that $u=w_iv_i$ for $i=1\ld d+1$. Furthermore, by $(ii)$, $w_i=e_if_i $ for $i=1\ld d+1$ for some \eis $e_i\in E_{M_1}(s)$  and some \eis $f_i\in  E_{M_2}(s)$. Therefore, $e_i=e_j$ for some $i,j\in\{1\ld d+1\}$.
Then  $w_j=e_if_j$, where $f_i\neq f_j$ as $w_i\neq w_j$, and $u=e_if_iv_i=e_if_jv_j$, whence $f_iv_i=f_jv_j$.
 Then $\deg_{M_2\otimes V}(s)<\deg_{M_2}(s)\deg_{V}(s)$ contradicting the first statement of the lemma.  \end{proof}

\bl{r44} Let $G=A_{n}$ and $s\in (T\setminus  Z(G))$.
Let $M=V_{\om_i}$  for $1<i<n$ or $V_{a\om_i+(p-1-a)\om_{i+1}}$  for $1<i<n-1$,
and let V be a non-trivial G-module. Suppose that $M\otimes V$ is \ir and $s$ is almost cyclic  on $M\otimes V$. Then
$m_s(M\otimes V)\leq n+1$. \el

\begin{proof} We can assume (by replacing $M$ by its dual) that $i\leq (n+1)/2.$ Set $M_1=V_{\om_1}$.

 Suppose first that $M=V_{\om_i}$. Set $M_2=V_{\om_{i-1}}$. The result follows from Lemma \ref{an2a} as soon as we show that the assumptions $(i), (ii)$ of that lemma hold in this case (as $(iii)$ is in the assumption). By Lemma \ref{wtlattice}(2), $\Om(M)\subseteq \Om(M_1)+\Om(M_2)$,   whence $(ii)$.
 To prove $(i)$, let $f,f'\in E_{M_2}(s)$. Then there are weights $\lam,\lam'\in\Om(M_2)$ such that $\lam(s)=f,\lam'(s)=f'$.
 By Lemma \ref{an4}, there are weights $\mu,\mu'\in \Om(M_1)$ such that  $\lam+\mu,\lam+\mu',\lam'+\mu,\lam'+\mu'\in\Om(M)$. Set $e=\mu(s),e'=\mu'(s)$. To get $(i)$, we need $e\neq e'$. Suppose $e=e'$; then $(\lam+\mu)(s)=(\lam+\mu')(s)$ and $\lam+\mu\neq \lam+\mu'$. This is a contradiction as $s$ is cyclic on $M$ by Lemma \ref{ma2}.

Suppose that $M=V_{a\om_i+(p-1-a)\om_{i+1}}$. Then we set
$M_2=V_{\om_{i-1}+(a-1)\om_i+(p-1-a)\om_{i+1}}$.
The result follows from Lemma \ref{an2a} if  the assumptions $(i), (ii)$ of that lemma hold in this case (since $(iii)$ is in the assumption). 
Lemma \ref{a55} yields $(ii)$.

To prove $ (i)$, let $f,f'\in E_{M_2}(s)$. Then there are weights $\lam,\lam'\in\Om(M_2)$ such that $\lam(s)=f,\lam'(s)=f'$.
By Lemma \ref{a55}, there are weights $\mu,\mu'\in \Om(M_1)$ such that $\lam+\mu,\lam+\mu',\lam'+\mu,\lam'+\mu'\in\Om(M)$.
Set $e=\mu(s),e'=\mu'(s)$. To get $(i)$, we need $e\neq e'$. Suppose $e=e'$; then  $(\lam+\mu)(s)=(\lam+\mu')(s)$ and $\lam+\mu\neq \lam+\mu'$. This is a contradiction as $s$ is cyclic on $M$ by Lemma \ref{ma2}.\end{proof}

\begin{theo}\label{pt7}
 Let G be of type $A_{n}$ and N an \ir G-module. Let  $s\in T_{reg} $. Suppose that s is almost cyclic  on N. Then    $m_s(N)\leq n+1$.\end{theo}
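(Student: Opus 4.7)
The plan is to reduce Theorem \ref{pt7} to the $p$-restricted case (Proposition \ref{pr5}) by means of Steinberg's tensor product theorem and the tensor-product results already established. Write $N=V_\omega$ and apply Steinberg's theorem to obtain $N\cong V_{\lambda_0}\otimes V_{\lambda_1}^{Fr}\otimes\cdots\otimes V_{\lambda_k}^{Fr^k}$ with each $V_{\lambda_i}$ $p$-restricted. If at most one $\lambda_i$ is non-zero then $N$ is a Frobenius twist of a $p$-restricted irreducible $G$-module, and the claim is immediate from Proposition \ref{pr5} combined with Lemma \ref{ft1}(2), which asserts the invariance of $m_s$ under Frobenius twists.

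Assume henceforth that at least two of the $\lambda_i$'s are non-zero, and write $N=M\otimes V$ where $M=V_{\lambda_j}^{Fr^j}$ is one chosen non-trivial Frobenius factor and $V$ is the tensor product of the remaining factors, itself a non-trivial $G$-module. By Lemma \ref{ma2}(3) the almost cyclicity of $s$ on $N$ forces $s$ to be cyclic on both $M$ and $V$; in particular every non-trivial $V_{\lambda_i}$ has all weights of multiplicity one. The classification underlying Table 2 then restricts each such $\lambda_i$ to the list $\omega_i$, $a\omega_1$, $a\omega_n$, or $a\omega_i+(p-1-a)\omega_{i+1}$, which is a short and manageable list of shapes.

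The argument now splits according to which shape is present among the non-zero $\lambda_i$'s. If some $\lambda_j$ lies in the range of Lemma \ref{r44} (i.e.\ $\omega_i$ with $1<i<n$, or $a\omega_i+(p-1-a)\omega_{i+1}$ with $1<i<n-1$) or of Corollary \ref{tpa} (i.e.\ $a\omega_1$ or $a\omega_1+(p-1-a)\omega_2$ with $1<a<p$, possibly after dualising), I take $M$ to be that factor; the cited lemma, with Lemma \ref{ft1}(2) used to strip away the Frobenius twist, yields $m_s(N)\leq n+1$. In the residual case where every non-zero $\lambda_i$ equals $\omega_1$ or $\omega_n$, each non-trivial factor has dimension $n+1$; choose $M$ to be any such factor. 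The complementary $V$ then has dimension at least $n+1=\dim M$ (there being at least one other non-trivial factor) and is cyclic on $s$, so Lemma \ref{ma2}(1) gives $m_s(N)\leq(n+1)\cdot 1=n+1$.

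The main obstacle I anticipate is ensuring the case analysis is truly exhaustive, in particular for the borderline weights that lie just outside the explicit ranges of Lemma \ref{r44} and Corollary \ref{tpa}---most notably $\omega_1+(p-2)\omega_2$ (the case $a=1,\ i=1$) and its dual. These are not handled verbatim by the cited results, but Lemma \ref{a55} has been proved in the wider range $1\leq a\leq p-1$, $1\leq i\leq (n+1)/2$, so rerunning the proof of Lemma \ref{r44} through Lemma \ref{an2a} under this broader hypothesis disposes of the remaining instances. With this mild extension in place the trichotomy above is complete and yields $m_s(N)\leq n+1$ in every situation.
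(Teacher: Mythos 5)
Your proposal is correct and follows essentially the same route as the paper: reduce via Steinberg's theorem to Proposition \ref{pr5} when the module is (a twist of) a $p$-restricted one, and otherwise use Lemma \ref{ma2}(3) to force all tensor factors into the Table 2 list and then invoke Corollary \ref{tpa} and Lemma \ref{r44}. Your worry about exhaustiveness is in fact well founded: the paper's own two-line deduction "the result follows from Corollary \ref{tpa} and Lemma \ref{r44}" silently passes over the factors with highest weight $\om_1$ or $\om_n$ and the boundary weight $\om_1+(p-2)\om_2$ (and its dual), and your two patches are exactly the right ones --- the cyclic-times-cyclic dimension count of Lemma \ref{ma2}(1),(2) for natural factors, and the observation that Lemma \ref{a55} is already proved for $i=1$, $a=1$, so the argument of Lemma \ref{r44} runs through Lemma \ref{an2a} unchanged there. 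The only cosmetic imprecision is attributing the removal of the Frobenius twist on the chosen factor to Lemma \ref{ft1}(2); strictly one should note that conditions (i), (ii) of Lemma \ref{an2a} are stated purely in terms of eigenvalue sets and are preserved under raising eigenvalues to the power $p^j$, so the twisted factor can be fed into that lemma directly --- the same level of informality the paper itself uses.
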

 
 \begin{proof} Let $N=N_1\otimes\cdots\otimes N_k$, where $N_1\ld N_k$ are tensor-indecomposable non-trivial \ir $G$-modules. 
 Then every $N_i$ for $i=1\ld k$ is a Frobenius twist of an infinitesimally \ir  $G$-module. 
The problem is well known to be equivalent to the case where one of $N_i$ is infinitesimally \ir and we can assume that this is $N_1$ by reordering the terms. 

If $k=1$ then the result is contained in Proposition \ref{pr5}. Let $k>1$.
Then, by Lemma \ref{ma2}, $s$ is cyclic on  every  $N_i$. Therefore, all weights of $N_i$ are of \mult 1. By \cite[6.1]{GS}, $N_i$ is a Frobenius twist of an \ir $G$-module, whose highest weight belongs to the list  $\{\om_j, 1\leq j\leq n$, \smallskip $a\om_1,a\om_{n}\,\, (1<a<p)$, $a\om_i+(p-1-a)\om_{i+1}, \,\, i=1\ld n-1, 0\leq a<p \}$.   \smallskip Set $M=N_1$, $V=N_2\otimes\cdots\otimes N_k$, so $V_2$ is \ir and non-trivial, whereas the highest weight $\om$ of $N_1$ lies in the above list. Then the result follows from Corollary \ref{tpa} and Lemma \ref{r44}.\end{proof}

One can expect that the upper bound for $m_s(V)$ for $V$ tensor-decomposable must be significantly lower than for $V$ tensor-indecomposable. The \f example 
makes evident that this is not so. In fact, we see  that the bound in Theorem \ref{pt7} is sharp. 

\begin{examp}\label{e2x}
{\rm Let $F$ be an algebraically closed field  of characteristic $p>0$, $G=A_n$, $n\geq 2$, and $V$  an \ir $G$-module with \hw $\om_1+p\om_n$.
Then there exists $s\in G$ such that $s$ is almost cyclic on $V$ and $m_s(V)=n+1$ for some $s\in G$.

\med
Indeed, set $s=\diag(a,a^p\ld a^{p^n})\in SL_{n+1}(F)$, where $a\in F$ is a primitive $((p^{n+1}-1)/(p-1))$-root of unity.
Then the matrix of $s$ on $V$ is $s\otimes s^{-p}$ and the \eis of this matrix are $a^{p^i-p^{j+1}}$ with $0\leq i,j\leq n$. If $i=j+1$ then this equals 1, as well as if $i=0,j=n$ as then $a^{1-p^{n+1}}=1$. So 1 is an \ei of $s\otimes s^{-p}$ of \mult at least  $n+1$. In fact, this is exactly $n+1$ as  $a^{p^i-p^{j+1}}=1$ implies $p^i-p^{j+1}\equiv 0\pmod {p^n+p^{n-1}+\cdots +p+1}$ which is false unless $i=j+1$ or $i=0,j=n$.  
 We show that all other \eis are of \mult 1. Suppose the contrary. Then $a^{p^i-p^{j+1}}=a^{p^k-p^{l+1}}$ for some choice of $0\leq i,j,k,l\leq n$ and  $i\neq j+1$, $k\neq l+1$ and $(i,j)\neq (0,n)\neq (k,l)$. As above, we observe that $i\neq k,j\neq l$. We have $p^i-p^{j+1}-p^k+p^{l+1}\equiv 0\pmod {p^n+p^{n-1}+\cdots  +p+1}$. Let $m=\min(i,j+1,k,l+1)$. 
If $m>0$ then $p^{i-m}-p^{j+1-m}-p^{k-m}+p^{l+1-m}\equiv 0\pmod {p^n+p^{n-1}+\cdots +p+1}$ and now $d:=\max(i-m,j+1-m,k-m,l+1-m)$ does not exceed $n$. If $d<n$ then the absolute value of the left hand side is at most $2p^{n-1}$ and the congruence does not hold. (If $p=2$ then $2^n+2^{n-1}+\cdots +1=2^{n+1}-1$ and the congruence fails too.) Suppose that $d=n$. Then $m=1$ and $\max(j,l)=n$. We can assume $l=n$ and then $j<n$. Then the  absolute value of $p^{i-m}-p^{j+1-m}-p^{k-m}+p^{n}$ does not exceed $p^n+p^{n-1}<p^n+p^{n-1}+\cdots +p+1$, hence the congruence above cannot hold. 

So $m=0$. As above, the  congruence fails unless possibly $d\in\{n,n+1\}$. 

Suppose that $d=n$. Then the  absolute value of $p^i-p^{j+1}-p^k+p^{l+1}$ does not exceed $bp^n-p^c-1$, where $b\in\{0,2\}, 0\leq c\leq n$. Then the congruence cannot hold.  

Suppose that $d=n+1$. Then we can replace $p^d$ by 1, and then 
the  absolute value of $p^i-p^{j+1}-p^k+p^{l+1}$ does not exceed $2+p^{n}+p^{n-1}$. If $n>2$ then 
  the congruence above cannot hold. Let $n=2$. We can assume that $l=2$ and then  $s\otimes s^{-p}=\diag(1,1,1, a^{1-p}, a^{2+p}, a^{1+2p},a^{p-1}, a^{-1-2p},a^{-2-p})$. One checks straightforwardly  that  this matrix is almost cyclic. 
} \end{examp}

\begin{proof}[Proof of Theorem $\ref{ab9}$] If $G$ is of type $A_n$ then the result is contained in Theorem \ref{pt7}. For $G$ of type $E_6$ and $E_7$ see Lemmas \ref{e65} and   \ref{77}, respectively. The case with $G=G_2$ is covered by   Corollary \ref{gk6}. 

Suppose first that $V=V_1\otimes V_2$ is tensor-decomposable, that is, both $V_1,V_2$ are non-trivial. 
Then $s$ is cyclic on $V_1$ and on $V_2$  (Lemma \ref{ma2}(4)). 
 If $G$ is not of type $A_n,n>1$, $D_n, n$ odd, or $E_6$ then every semisimple element of $G$ is real and the result follows from Lemma \ref{ma2}(4). 
 
Let $G  =D_n, n>3$ odd. Let $\om=\sum p^{i}\mu_i$ be the highest weight of $V$, where $\mu_i$'s are $p$-restricted.
Then $V=\otimes_{i}V_{p^i\mu_i}$. By Lemma \ref{gc3}, the weights of $V_{p^i\mu_i}$, and hence of $V_{\mu_i}$ are of \mult 1, so, by Table 2,
 $\mu_i\in \{\om_1,\om_{n-1},\om_n\}$.   
Then $m_s(V)\leq 2$ by Lemma \ref{tp7}.

Next suppose that $V$ is tensor-indecomposable. By Steinberg's theorem,  $V$ is a Frobenius twist of a $p$-restricted $G$-module.  So we can assume that $V$ itself is $p$-restricted. 

Suppose first that $\om$ occurs in Table 1. Then $\dim V^T>1$ and $\dim V^T=\dim V^s=m_s(V)$ by Theorem \ref{do1}(1), so (2) follows from the data in Table 1. 

Suppose that $\om$ does not  occur in Table 1. By  Theorem \ref{ag8}, $\om$ occurs in Table 2.  
 If $(G,p,\om)=(F_4,3,\om_4)$ then the result  follows from  Lemma \ref{ff44}.
This completes considerations of groups of  the exceptional Lie types. 

So we are left with classical groups (other than of type $A_n$). 
The case with $G=D_n, n\geq 4$ is settled in Lemma \ref{oo1} for $\om=\om_{n-1}$ or $\om_n$, for  $\om=\om_1$ see Lemma  \ref{co1}(3). 
This implies the result for an arbitrary $p$-restricted $\om$ in view of Theorem \ref{ag8}  and Table 2. A similar argument works for $G=B_n$,  $n>2,p\neq  2$, with use of Lemmas \ref{cn5} and \ref{o11}. The case $B_2\cong C_2$ is considered in Lemma \ref{2bc}. 

The case  $G=C_n, p=2$ has been examined in Lemma \ref{o11}. 
Suppose that $G$ is of type $C_n$ with $p\neq 2$.   By Theorem \ref{ag8}  and Tables 1,2 we have to inspect the cases with $\om\in \{\om_1,\om_{n-1}+\frac{p-3}{2}\om_{n},\frac{p-1}{2}\om_{n}\}$
 as well as $n=3, p\neq 2, \om=\om_3$. The case with $\om=\om_1$ is trivial, the other cases are dealt with in Lemmas \ref{cn5}, \ref{5n5} and   
the remaining case (where $G=C_3,p\neq 2, \om=\om_3)$ is settled in Lemma \ref{cn3}.
\end{proof}  

 One easily observes that the exceptions in Theorem \ref{ab9} are genuine. It suffices to show this for $\om\in\{3\om_1,p\neq 2, (1+p^k)\om_1\}$.  
 Let $\diag(a,a\up)\in SL_2(F)$ be the matrix of $s$ on $V_{\om_1}$. Then the matrices of $s$ on $V_{2\om_1}$, $V_{3\om_1}$ and $V_{(1+p^k)\om_1}$ are
 $\diag(a^2,1,a^{-2})$,  $\diag(a^3,a,a\up, a^{-3})$  and   $\diag(a^{p+1},a^{p-1},a^{-p-1},a^{-p+1})$, respectively. If  $a^2=-\Id$,  $a^3=1\neq a$   then $s$
is almost cyclic on $V_{2\om_1}$, $V_{3\om_1}$, respectively, and $m_s(V_{2\om_1})=m_s(V_{3\om_1})=2$. If $a$ is a primitive $(p+1)$-root of unity then 
 $s $ is almost cyclic on  $V_{(p+1)\om_1}$ and $m_s(V_{(p+1)\om_1})=2$.

\bp[Proof of Theorem {\rm \ref{ib2}}] The result follows from Theorem \ref{ab9}. \enp

\begin{proof}[Proof of Corollary {\rm \ref{hv2}}] Suppose the contrary, that $m_s(V)\geq \dim V/2>1$. 
By Theorem \ref{ab9}, either $m_s(V)\leq 2$, hence $\dim V\leq 4$,
or $V$ is as in item $(1),(2)$ or $ (3)$ of Theorem \ref{ab9}. 

Suppose first that $\dim V\leq 4$. 
If $\dim  V=3$ then $G=A_1$, 
$p\neq 2$, $\om=2p^i\om_1$ or  $G=A_2$, 
 $\om\in \{p^i\om_1, p^i\om_2\}$. In the former case  $m_s(V)=2$ for $s\in SL_2(F)$ implies $s^2=-\Id$ as recorded in the corollary. The latter case is ruled out by Lemma \ref{co1} or by  a straightforward calculation. Let $\dim V=4$.  
Then   $G\in\{A_1, A_3, B_2,C_2\}$. In fact, if $G=A_3$ then $V$ is a twist of the natural $G$-module, on which $m_s(V)\leq 1$ for a regular semisimple element  $s\in G$ (Lemma \ref{co1}). 
The cases with $G=B_2$ and $C_2$ are ruled out by  Remark \ref{bc4}. Let $G=A_1$. Then $\om$  is well known to be either $3p^i\om_1\,\,(p\neq 2,3),
$ or $(p^i+p^j)\om_1$ as recorded in the statement. 

The result is obvious if item (3) of Theorem \ref{ab9}  holds as $\dim V\geq 27$ for $G=E_6$.

Suppose that (2) of Theorem \ref{ab9} holds. Then $m_s(V)=\dim V^T$ and inspection of $\dim V/\dim V^T$ in Table 1 yields a contradiction.
 
\st item (1) of Theorem \ref{ab9} holds so $G$ is of type $A_n$. We can assume $n>1$ as otherwise $m_s(V)\leq 2$ by Theorem \ref{ib2} and $\dim V\leq 4$; this case has been examined above.  In addition,  we can assume that $V$ is not a twist of the adjoint $G$-module as this is included in case (2), unless $n=2,p=3$. In the latter case $\dim V=7$ and $m_s(V)\leq 2$ by Theorem \ref{ib2}.

By the above, we can assume that $m_s(V)>2$ and $\dim V> 5$.

So we assume that $n>1$. If $V$ is a twist of the \ir $G$-module with \hw $\om_1$
 then we get a contradiction with Lemma \ref{co1}. 

If   $V$ is {\it  not} a twist of the \ir $G$-module with \hw $\om_1$ then $\dim V\geq n(n+1)/2$
so $\dim V/m_s(V)\geq n/2$ and $n/2>2$ for $n>4$.  Suppose that $n\leq 4$ and $\dim V\leq 2(n+1)$. If $n=2$, this implies $p\neq 2$ and  $\om\in\{2\om_1,2\om_2\}$. However, 
in these cases $\dim V=6$ and $m_s(V)\leq 2$ by Lemma \ref{we4}, so this case is ruled out.    
 By \cite[Appendices A6,A7,A8]{Lu},  the highest weight $\om$ of $V$ is 
  $ p^t\om_2$   if $n=3$ and $\{ p^t\om_2$ or $p^t\om_3\}$ if $n=4$ for some integer $t\geq 0$. So either  $G=SL_4(F)$, $V=V_{\om_2}$ or $G=SL_5(F)$, $V=V_{\om_2}$ or $V_{\om_3}$. As $V_{\om_2}$, $V_{\om_3}$ are dual for $n=4$,
it suffices to examine $\om=\om_2$ in both the cases. If  $n=3$ then $V$ is a twist of the natural module for   $D_3\cong A_3$ on which $m_V(s)\leq2$ as $s$
is regular (see Lemma \ref{co1}). If $n=4$ then $ m_s(V_{\om_2})\leq 2$ by Lemma \ref{ep9} and $\dim V_{\om_2}=10$, so this case is ruled out. This completes  the proof. \enp


\section{An application to a recognition problem for linear groups over finite fields}

There is a problem in the theory of linear groups over finite fields aimed to determining the subgroups $H$ of $GL_n(q)$ containing a matrix $\diag(\Id_m,M)$, where $M\in GL_{n-m}(q)$ is \irt The latest and most significant contribution to this project is made in \cite{GPPS}, where the authors solve this problem for $m<n/2$. The result of \cite{GPPS} can be also interpreted as a classification of subgroups of $GL_n(q)$ containing a cyclic Sylow $\ell$-subgroup of $GL_n(q)$ for some prime divisor $\ell$ of $GL_n(q)$, see \cite{BP}.
Observe that  the case where $H$ is \ir in  $GL_n(q)$ is the bulk of the problem. 

In this section we contribute to the problem in the case  where $H$ is a quasi-simple group of Lie type in defining characteristic $p>0$ for $p|q$. 
Our purpose is to consider the complementary case with $m\geq  n/2$. Observe that the case with $p\not|q $ is considered in \cite{DZ3}.

Note that specializing the   main theorem of \cite{GPPS} to the case where   $H$ is a quasi-simple  group of  Lie type 
leads one to Examples 2.1, 2.4 and 2.9 in \cite{GPPS}. Then it is easy to observe that Examples 2.1, 2.4  yield  nearly natural \reps of $H$ (whereas Examples 2.9 are irregular cases arising for $n\leq 9$). 
The converse is also true, that is, if $H$ is a quasi-simple  group of  Lie type  in defining characteristic $p|q$ and
$\rho: H\ra GL_n(q)$ is  a nearly natural \irr then $\rho(H)$ is as one of these examples. However, we do not provide here the proof of this fact.

\subsection{Representation of $SL_2(q)$ containing almost cyclic elements}
Our primary goal here is to give an example with $m= n/2$ (see Lemma \ref{ev3}), which is essential for what follows. 

\bl{s1t} Let $\mathbf{G}=SL_2(F)$, let   $\rho$ be an \irr of  $\mathbf{G}$ with \hw  $(1+p^{i})\om_1$ for some integer  $i\geq 1$. Let $s\in \mathbf{G}$ be a non-central semisimple element such that $\rho(s)$ is almost cyclic   and $m_s(\rho)>1$. 
Then  $\rho(s)\in PSL_2(p^i)\subset GL_4(p^i)$ and s is conjugate in $\mathbf{G}$  to an element of $SL_2(p^i)$, unless  $p\neq 2$ and $s^2$ is conjugate to an element of $SL_2(p^i)$. In addition, the group $\rho(SL_2(p^{2i}))$ is \ir  whereas $\rho(SL_2(p^i))$ is reducible.\el

\bp  Let $s=\diag(a,a\up)\in SL_2(F)$. Then $\rho(s)=\diag(a^{p^ i+1},a^{p^ i-1}, a^{-p^i+1},a^{-p^ i-1})$, and some of these \eis coincide. 
Here $a\in F^\times, $ $a^2\neq 1$ as  $s\notin Z(\mathbf{G})$. This implies $a^{p}\neq a^{-p}$, $a^{p^ i+1}\neq a^{p^ i-1}$ and $a^{-p^ i+1}\neq a^{-p^ i-1}$.  
 So either $a^{2(p^ i+1)}=1$ or $a^{2(p^ i-1)}=1$, and hence either $s^{p^ i+1}\in Z(\mathbf{G})$ or $s^{p^ i-1}\in Z(\mathbf{G})$. If $p=2$ then $Z(\mathbf{G})=1$, and the group $SL_2(2^i)$ contains elements of  order $|s|$;  these are obviously conjugate to elements of $\lan s\ran$.  
Let $p>2$.  If $a^{p^ i+1}=1$ or $a^{p^ i-1}=1$ then, similarly, $s$ is conjugate to an element of $SL_2(p^{i})$.
Let   $a^{p^ i+1}=-1$ or $a^{p^ i-1}=-1$; then    $\rho(s)=(-1,-1,-a^{-2},-a^{2})\in GL_4(p^i)$ as $s^2\in SL_2(p^i)$.  
 So $\rho(s)\in GL_4(p^i)$, as claimed.  

Turn to the additional claim of the lemma.  If $q=p^{2i}$ then the weight $(1+p^i)\om_1$ is $q$-restricted, so $\rho$ remains \ir under restriction to $SL_2(q)$  (see \cite[\S 2.11]{Hub}). If $q=p^{i}$ then $\rho|_{SL_2(q)}\cong (\tau\otimes\tau)|_{SL_2(q)}$, where $\tau$ is the \irr of $\mathbf{G}$ with \hw  $\om_1$. It is well known that $\tau\otimes\tau$ is reducible. 
\enp

\bl{s2s} Let $\mathbf{G}=SL_2(F)$, $G=SL_2(q)$, $q=p^d>p$, and let   $\rho$ be an \irr of  $\mathbf{G}$ with \hw  $(1+p^{i})\om_1$ for some integer  $i\geq 1$. Suppose that $\rho(G)$ is conjugate in $GL_4(F)$ to an \ir subgroup of $GL_4(p^m)$, where $m$ is minimal possible. Then $d\not|i$ and either d is even, $i\pmod{d}=d/2$ and $m=d/2$ or  $i\pmod{d}\neq d/2$ and $m=d$.
\el 

\bp  Obviously,  $\rho(G)\subset GL_4(q)$.  
Suppose that the group $\rho(G)$ is conjugate to a subgroup of $GL_4(F_0)$, where $F_0$ is a  subfield of $\FF_{q}$ and  $|F_0|=p^m$. Then  $m|d$.
(As $ \FF_{q}$ is a vector space over $F_0$, we have $ |\FF_{q}|=|F_0|^j$ for some integer $j>0$, 
so $q=p^{d}=(p^m)^j$, whence $mj=d$.) As $\rho(G)$ is irreducible, $i$ is not a multiple of $d$. (Otherwise $\rho|_H\cong V_{\om_1}|_H\otimes V_{\om_1}|_H$ is reducible.)

\st $F_0\neq  \FF_{q}$, so $m<d$. 

By Zsigmondy's theorem (see \cite[5.2.14]{KL}), if $q^{2}=p^{2d}>p^2$ (this holds by assumption) and $q^2\neq 64$ then there is a prime $\ell|(q^{2}-1)=p^{2d}-1$ such that    $(\ell, p^j-1)=1$ for every $j<2d$. Therefore, if $F_0\neq \FF_{q}$ (hence $m<d$) and $q\neq 8$   then 
$(\ell, p^{2m}-1)=1$. Moreover,   $(\ell, p^{4m}-1)=1$.
Indeed, otherwise $\ell$ divides $(p^{4m}-1,p^{2d}-1)=p^{(4m,2d)}-1$ by \cite[Hilfsatz 2(a)]{Hup}. As $(4m,2d)\leq 2d$,
the claim is true unless $(4m,2d)= 2d$, whence $d|2m$, and hence    $2m=d$. Therefore, $\FF_{q}$ is the quadratic extension of $F_0$.
Let $q^2 = 64$ so  $G=SL_2(8)$, $d=3$ and $|s|=9$.  Then $\rho(G)\subset GL_4(8)$. As $2^m|8$,  it follows that 
either  $2^m=8$, as required,  or $m=1$; in the latter case $\phi(G)\subset GL_4(2)$. However,  $GL_4(2)$ has no element of order 9, so we have a contradiction. 

    Finally, we show that $i$ is a multiple of $d/2$. Let $\gamma: F\ra F$ be the mapping defined by $x\ra x^{p^m}$ for $x\in F$. This extends to the mapping $\tau:GL_4(F)\ra  GL_4(F)$. Then $\tau(y)=y$ for $y\in GL_4(F_0)$ and $\tau$ acts as an \au of order 2 on $GL_4(q)$. Then the \hw of the \rep $\rho^\tau:h\ra  \tau(\rho(h))$ for $h\in SL_2(F)$ is $p^m(1+p^i)\om_1$. As $\tau$ acts trivially on $GL_4(F_0)$ and hence on $\rho(G)$, it follows that $i$ is a multiple of $d/2$, as claimed.

Let $F_0= \FF_{q}$.  By the above, $d$ does not divide $i$, and we are left to show that 
$i$ is not a multile of $d/2$. Suppose the contrary.    Then   the mapping $x\ra x^{p^i}$ for $x\in F$ yields
the Galois \au of $\FF_q$ of order 2. \itf the trace of every element $\rho(h), h\in H$ lies  in
the subfield $F_1$, say, of index 2 in $\FF_q$. Then $\rho(H)$ can be realized over $F_1$,
a contradiction.
\enp

\begin{corol}\label{os4} Let $K=GL_4(p^m)=GL(V)$, where  $m>1$. Then for every $j|2m$, $j>1$, 
 there is an absolutely \ir subgroup $G_j\cong  PSL_2(p^j)$ of K.
In addition we have:

$(1)$ if $p\neq 3$ and $t\in G_j$, $j>1$, is of order $(p+1)/(2,p+1)$ then t is almost cyclic and $m_t(V)= 2;$ 

$(2)$ if $p= 3$, $j$ is even and  $t\in   G_{j}$  of order $4$   then t is almost cyclic and $m_t(V)= 2$. 
\end{corol}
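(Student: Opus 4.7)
The plan is to construct $G_j$ as the image $\rho(SL_2(p^j))$, where $\rho:\mathbf{G}=SL_2(F)\to GL_4(F)$ is the \ir representation with highest weight $(1+p^i)\om_1$ for an exponent $i$ to be chosen. Since $1+p^i$ is even when $p$ is odd (and the center of $\mathbf{G}$ is trivial when $p=2$), the element $-I$ lies in $\ker\rho$, so $\rho(SL_2(p^j))\cong PSL_2(p^j)$. Two conditions on $i$ must be met: first, $i\not\equiv 0\pmod{j}$, so that by Steinberg's tensor product theorem $\rho|_{SL_2(p^j)}\cong V_{\om_1}\otimes V_{\om_1}^{Fr^{i\bmod j}}$ stays absolutely \ir on the finite subgroup; second, the minimal $m_0$ with $\rho(SL_2(p^j))$ conjugate into $GL_4(p^{m_0})$ must divide $m$. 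By Lemma \ref{s2s}, $m_0=j$ when $i\pmod{j}\notin\{0,j/2\}$, and $m_0=j/2$ (requiring $j$ even) when $i\equiv j/2\pmod{j}$.

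To meet the divisibility constraint $m_0\mid m$, I argue by cases: if $j\mid m$ and $j\geq 3$ take $i=1$, giving $m_0=j\mid m$; if $j=2\mid m$ take $i=1$, giving $m_0=1\mid m$; if $j\mid 2m$ but $j\nmid m$ then $j$ is even and $j/2\mid m$, so take $i=j/2$, giving $m_0=j/2\mid m$. In every case $G_j\cong PSL_2(p^j)$ embeds as an absolutely \ir subgroup of $GL_4(p^m)$.

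For part $(1)$, pick $t\in G_j$ of order $(p+1)/(2,p+1)$ and lift to $s\in SL_2(p^j)$ of order dividing $p+1$; conjugating in $\mathbf{G}$, write $s=\diag(a,a^{-1})$ with $a\in\FF_{p^2}^\times$ satisfying $a^{p+1}=1$. Then $a^p=a^{-1}$, so $a^{p^i}=a^{(-1)^i}$, and the eigenvalues of $\rho(s)=s\otimes s^{Fr^i}$, namely $\{a^{1+p^i},a^{1-p^i},a^{-1+p^i},a^{-1-p^i}\}$, collapse to the multiset $\{1,1,a^2,a^{-2}\}$ irrespective of the parity of $i$. The assumption $p\neq 3$ gives $|a|\geq 3$ (since $|a|\geq (p+1)/2\geq 3$ for $p\geq 5$, and $|a|=3$ for $p=2$), so $a^2\neq 1$ and $\rho(s)$ is almost cyclic with $m_t(V)=2$.

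For part $(2)$, $p=3$ and $j$ is even, so $8\mid 3^j-1$ and $SL_2(3^j)$ contains a diagonal element $s=\diag(a,a^{-1})$ with $a\in\FF_{3^j}^\times$ of order $8$; its image $t\in G_j$ has order $4$. Since $3^i\equiv 1\pmod 8$ for $i$ even and $3^i\equiv 3\pmod 8$ for $i$ odd, a direct computation gives the eigenvalues of $\rho(s)$ as $\{a^2,1,1,a^{-2}\}$ or $\{-1,a^{-2},a^2,-1\}$ respectively (using $a^4=-1$); in both cases $a^2$ has order $4$, so $\rho(s)$ is almost cyclic with $m_t(V)=2$. The delicate point throughout is the joint choice of $i$ balancing Steinberg irreducibility on the finite group against the field-of-definition constraint from Lemma \ref{s2s}; once $i$ is fixed, the eigenvalue calculations are routine consequences of $a$ being a $(p+1)$-th or $8$-th root of unity.
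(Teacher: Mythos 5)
Your construction is essentially the paper's own proof: the same representation $\rho$ of $SL_2(F)$ with highest weight $(1+p^i)\om_1$, the same case split between $i=1$ (when $j\mid m$) and $i=j/2$ (when $j\mid 2m$ but $j\nmid m$) governed by Lemma \ref{s2s}, and the same choice of torus elements for (1) and (2). Your explicit eigenvalue computations, in particular tracking both parities of $i$ in part (2), are if anything more careful than the paper's bare appeal to Lemma \ref{s1t}, whose statement does not literally assert the conclusion and whose proof must be unwound as you do.

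One small gap in part (1): from the spectrum $\{1,1,a^2,a^{-2}\}$, almost cyclicity requires $a^4\neq 1$, not merely $a^2\neq 1$ --- if $a^2=-1$ the spectrum is $\{1,1,-1,-1\}$, which has two eigenvalues of multiplicity $2$ and is not almost cyclic. Your justification ``$|a|\geq 3$, so $a^2\neq 1$'' does not exclude $|a|=4$ (e.g.\ a priori for $p=7$). What does exclude it is that $|a|=4$ forces $s^2=-I$, so $t$ would have order $2=(p+1)/(2,p+1)$, giving $p=3$, which is excluded in (1). Add that one line and the argument is complete.
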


\bp Let $\mathbf{G}$ be as in Lemma \ref{s1t}, and  let $\rho $ be an \irr of $\mathbf{G}$ with \hw $(1+p)\om_1$. If $j|m$ then $G_j:=\rho(SL_2(p^j))\cong  PSL_2(p^j)$ is
 \ir for $j>1$ by Lemma \ref{s1t}. Obviously, $G_j\subset GL_4(p^j)\subset GL_4(p^m). $ If $j\not | \, m$ then $j$ is even. Take  $i=j/2$ in  Lemma \ref{s1t}. Then $G_j=\rho(SL_2(p^j))$ is \ir
and  $G_j\subset GL_4(p^{j/2})\subset GL_4(p^m) $ as $j/2$ divides $m$.

If $p\neq 3$ then let $s\in SL_2(p)\subset SL_2(p^j)$ be of order $p+1$. By Lemma \ref{s1t}, $s$ is almost cyclic on $V$ and $m_t(V)= 2.$ 

If $p=3$ then  $SL_2(9)\subset SL_2(p^j)$ has an element of order 8. As above,  take  $i=1$ in  Lemma \ref{s1t}. Then $t=\rho(s)\in G_j$ is   is almost cyclic and $m_t(V)= 2$. 
\enp

\begin{rem}\label{x6x}{\rm  It is not true that the \ir subgroups of $GL_4(F)$ isomorphic to $G_j\cong PSL_2(p^j)$
are conjugate even if they contain elements $s$ with $m_s(V)=2$. Indeed, let $\rho_i$ be an \irr of $\mathbf{G}\cong SL_2(F)$ with \hw $(1+p^i)\om_1$, $i|j$, $i< j$. Set $G_{ij}=\rho_i(SL_2(p^j))$. Then  $G_{ij}$ is \ir and $G_{ij}\cong G_j$. 
Let $s_i\in SL_2(p^j)$ be of order $p^i+1$ and $t_i=\rho_i(s_i)\in G_{ij}$. Then $t_i$ has \ei 1. Let $i'|j, $ 
and $i\neq i'<j$. Then  $\rho_{i'}(s_i)\in G_{ij}$ does not have \ei $1$. Therefore, $\rho_i $ and $\rho_{i'}$ are not equivalent  and, moreover, $  G_{ij}$ and $G_{i'j}$ are not conjugate in $GL_4(F)$.  }
\end{rem}

\bl{es3} Let q be a p-power, and $r\geq 1$ an integer. If $r\neq 1$ then there exists an odd prime $\ell$ such that $\ell$ divides $q^r+1$ and does not divide $q^j-1$ for $j\leq r$. This remains true for $r=1$ unless $q=8$ or $q+1$ is a $2$-power. In the exceptional cases $q+1$ is an $\ell$-power. \el
 
\bp By  Zsigmondy's theorem (see \cite[5.2.14]{KL}), if $q^{2}=p^{2d}>p^2$  
and $q^2\neq 64$ then there is a prime $\ell|(q^{2}-1)=p^{2d}-1$ such that    $\ell\not|(p^j-1)$ for every $j<2d$. Clearly, $\ell$ is odd here.
If $q^2= 64$ then take $\ell^2=9$.   If $q=p$ then  let $\ell$ be the maximal prime dividing $q+1$. If $\ell\neq 2$ then $\ell\not|(q-1)$ as desired. Otherwise, $\ell=2$ and $q+1$ is a 2-power, and the claim follows.\enp

\bl{ev3} Let $H=GL_{4r}(q)=GL(V)$, $q^r\neq 3$. 
Then there exists an \ir  subgroup $Y\cong PSL_2(q^{2r})$  of H  and $h\in PSL_2(q^{r})\subset Y$ such that $|h|$ is a prime power, h is almost cyclic on V and 
the \mult of \ei $1$ of h equals $2r$. Moreover, $V=V_1+V_2$, where $V_1$ is the $1$-eigenspace of $h$, $hV_2=V_2 $ and $h$ acts irreducibly on $V_2$.
If $r\neq 1$ then h can be chosen of  prime order. \el

\bp In assumptions of Lemma \ref{s1t} set $G=SL_2(q^{2r})$ and $i=r$. By Lemma \ref{es3}, there exists 
 $s\in SL_2(q^{r})\subset G$ be such that $|s|$ is a prime power,  $|s|$  divides $q^{r}+1$
and does not divide  $ q^{2t}-1$ for any positive integer $t<r$. If $r\neq 1$ then we can choose $s$ to be of a prime order.  In addition, we can choose $s$ to be of odd order unless $r=1$ and $q+1$ is a 2-power.

Set $h=\rho(s)$.  
By Lemma \ref{s1t}, $\rho(G)$ is \ir and  contained in $GL_4(q^{r})$ (see Lemma \ref{s2s}). 
Let $V'$ be the underlying space of $ GL_4(q^{r})$. Then $h=\rho(s)$ is conjugate in $GL_4(q^{r})$ to a matrix $\diag(1,A,1) $, where $A\in GL_2(q^r)$. As $\det A=1$,  it follows  that
  $A$ does not have \ei 1.  Therefore,    
 $V'=V'_1+V'_2$, where $V'_1$ is the 1-eigenspace of $h$ on $V'$ and $sV'_2=V'_2 $.

Next embed $GL_4(q^{r})$ into $GL_{4r}(q)$ by viewing  $\FF_{q^{r}}$ as a vector space over  $\FF_{q}$. 
Then  $V'_1,V_2'$  and $V'$ can be viewed as
vector spaces  $V_1, V_2, V$, say,   over $\FF_q$, and we have  $\dim V_1=\dim V_2=2r$. We claim that $h$   is \ir on $V_2$. Indeed,   
if $h$ is reducible on  $V_2$ then  $|h|$ divides $q^j-1$ for $j<2r$ and also $q^{2r}-1$, hence $q^{(2r,j)}-1$ by \cite[Hilfsatz 2(a)]{Hup}. So we can assume $j|2r$, hence $j\leq r$. 
As the order of $h$ is equal to $|s|$ or $|s|/2$, we conclude that   $h$ is \ir on $V_2$ unless $|h|=|s|/2$, in particular, $|s|$ is a 2-power. This happens only when $q+1$ is a 2-power
and then we can choose $s$ with  $|s|=q+1$. Then $(q-1)/2$ is odd, so  $|h|=(q+1)/2$ divides $q-1$, which  implies $q+1=4$. This contradicts the assumption. 
\enp

\subsection{Regular semisimple elements in representations of quasi-simple finite groups of Lie type}

Recall that the notions of  cyclic and almost cyclic elements are meaningful for elements $g\in GL_n(F_1)$ for any subfield $F_1$ of $F$, that is, we simply   apply the definition to $g$ as an element of $GL_n(F)$. 

\bl{pc1} Let $G$ be a finite quasi-simple group of Lie type in defining characteristic $p>0$,  let $V$ be a non-trivial  \ir $\FF_qG$-module  for some p-power $q$, $\dim V>2$ and $\overline{V}=V\otimes F$. Let $s\in G$ be a   regular semisimple element and let $m_s(V)$ be the maximal  \ei multiplicities of $s$ on $\overline{V}$.
Suppose that $s$ is almost cyclic on $\overline{V}$ and $m_s(\overline{V})\geq \dim V/2.$ Then   $G$ is isomorphic to $ SL_2(q')$, or $PSL_2(q')$ for some p-power  $q'$. Let $\rho$ be the \rep afforded by V. Then we can assume that $G\cong SL_2(q')$   and
one of the \f holds:

 $(1)$ $\dim V=3$, $\rho(G)\cong  PSL_2(q')$,  $q$ is a power of $q'$, $p\neq 2$ and $s^2\in Z(G)$;

$(2)$ $\dim V=4$,  $p\neq 2,3$, $\rho(G)\cong  SL_2(q')$, $q$ is a power of $q'$ and $s^3\in Z(G);$  

$(3)$ $\dim V=4$,  $\rho(G)\cong PSL_2(q')$, $q'>p$,  $q^2$ is a $q'$-power, and 
 if $q'=3^l$ then  $l$ is not  a prime; 

$(4)$ $\dim V=4r$, $r>1$, $\rho(G)\cong  PSL_2(q')$, $ q^{2r}$  is a power of $q'$,   
and $m_s(V)= \dim V/2.$
\el

\bp Let $\mathbf{G}$ be a  simply connected simple \ag  such that $G=\mathbf{G}^{\si}$ and  $\si$ is some Steinberg endomorphism of $\mathbf{G}$.  Saying that $s\in G$ is a   regular semisimple element means that $s$ is regular in $\mathbf{G}$. 

(i) Suppose first that  $V$ is absolutely \irt Then  $V $ extends to a $\mathbf{G}$-module $\overline{V}$.  
By Corollary  \ref{hv2}, $\mathbf{G}$ is of type $A_1$, so $\mathbf{G}\cong SL_2(F)$  and $3\leq \dim \overline{V}=\dim V\leq 4$. 
We will also use  $\rho$ to denote the corresponding \rep of $\mathbf{G}$.

Then  $G\cong SL_2(q')$  for some $p$-power $q'$.
  Let $s\in G$ and let $\diag(a,a\up)$ with  $a^2\neq 1$ be a  conjugate of $s$ in $ SL_2(F)$.

  If $\dim \overline{V}=3$ then $p\neq 2$ and  $\overline{V}$ is a Frobenius twist of the \ir $\mathbf{G}$-module with \hw $2\om_1$; the matrix of $\rho(s)$  is $\diag(a^2,1,a^{-2})$, which is cyclic unless $a^2=a^{-2}$, and hence $s^2=-\Id\in Z(G)$. One observes that such $s$ is conjugate to an element of $SL_2(p)$. So $s\in SL_2(q')$ whenever $SL_2(q')\subseteq SL_2(q)$, equivalently,   $q$ is a $q'$-power.

Let $\dim \overline{V}=4$. Then   $\overline{V}$  is a Frobenius twist of the \ir $\mathbf{G}$-module with \hw  $3\om_1$ (for $p\neq 3$) or $(1+p^i\om_1)$ for some $i>0$. By Lemma \ref{ft1}, we can assume that 
the \hw of $\overline{V}$ is $3\om_1$ or $(1+p^i\om_1)$.
In the former case  $\rho(G)\cong SL_2(q')$ and  the matrix of $s$ on $\overline{V}$ is $\diag(a^3,a,a\up, a^{-3})$. If this matrix is not cyclic then either $a^6=1,p\neq 3$, and hence $s^3\in Z(G)$, as required,  or $a^4=1$ in which case $s$ is not almost cyclic. In the latter case $q'>p$ , $\rho(G)\cong PSL_2(q')$ and the matrix of $s$ on $\overline{V}$
is $\diag(a^{p^i+1},a^{p^i-1},a^{-p^i+1}, a^{-p^i-1})$.   If this matrix is not cyclic then either $a^{2p^i+2}=1$ or $a^{2p^i-2}=1$ or $a^{2p^i}=1$; the latter implies $a^2=1$, which is false as  $s$ is regular. If $a^{2p^i+2}=a^{2p^i-2}$ then $a^4=1$, whence $p\neq 2$ and $a^2=-1$. One checks that $s$ is not almost cyclic on $\overline{V} $ in this case. In two other cases $s$ is almost cyclic on $\overline{V} $. By  
Lemma \ref{s2s}, $\rho(G)$ is contained in $ GL_4(q')$ or $GL_4(\sqrt{q'})$, whence $q^2$ is a power of $q'$. If $p\neq 3$ then  
$SL_2(q')$ has an element $s$  in question for every $q'$ (see Corollary \ref{os4}), so (3) holds in this case (provided $q'>p$).

 Let $p=3$ and let $q=3^m$ for some $m>1$.  By  Corollary \ref{os4},  if $q'$ is a square then 
$SL_2(q')$ contains an element $s$ with properties required.  \st $q'$ is not a square. Then  $q'=3^l$ with $l$  odd, and  $l>1$  (as $q'>p=3$). By Lemma \ref{s2s},  $q'$ is a minimal 3-power $x $ such that  $\rho(SL_2(q'))\subseteq GL_4(x)$. So $GL_4(q')\subseteq GL_4(q)$,  and hence $q$ is a power of $q'$.

\st $l$ is a prime.   By Lemma \ref{s1t}, $s$ is contained in a subgroup isomorphic to $SL_2(p^i)$, so $|s|$ divides $3^{2i}-1$, and also $3^{2l}-1$ as $s\in SL_2(3^l)$. By
 \cite[Hilfsatz 2(a)]{Hup}, $|s|$ divides $3^{(2i,2l)}-1$. As $l$ is a prime, $(2i,2l)\in\{2,l,2l\}$, so either $(i,l)=1$ or $l|i$.   
In the latter case $\rho(SL_2(q'))$ is reducible by Lemma \ref{s2s}. So  $(i,l)=1$ so $|s|$ divides  $3^2- 1=8$. As $l$ is odd, we have $(3^l\pm 1,8)\neq 4$,whence  $|s|=4$,  but then $\rho(s)$ is not almost cyclic.

Suppose that $l$ is not a prime, and let $l'$ be a prime with $l'|l$. 
Let $i=l'$ and let $s\in SL_2(q')$ be of order  $3^{l'}+1$. Then  the group $\rho(SL_2(q'))$ is  irreducible by Steinberg's theorem (see \cite[\S 2.11]{Hub}), $\rho(s)$ is \ac and $m_s(\overline{V})=2 $  by Lemma \ref{s1t}. This completes the proof of the case with $\dim \overline{V}=4$.

(ii) Suppose that $V$ is not absolutely irreducible.
Then, by Schur's lemma,  the centralizer of $\rho(G)$ in ${\rm End}(V)$ is a field $F_0$, say, 
and $|F_0|=q^r$ for some $r>1$. \itf $\rho(G)\subset GL_{n/r}(q^r)\subset GL(V)$ (\cite[Lemma 2.10.1]{KL}). 

Therefore,   $\overline{V}=V\otimes F$ as an $FGL_{n/r}(q^r)$-module is a direct sum $\overline{V}_1\oplus \cdots \oplus \overline{V}_r$, where $\overline{V}_1\ld \overline{V}_r$ are \ir $FGL_{n/r}(q^r)$-modules that are Galois conjugate to each other (by ${\rm Gal}(\FF_{q^r} /\FF_q)$), and each $\overline{V}_1\ld \overline{V}_r$ is absolutely  irreducible as $F G$-module,  see \cite[Lemma 2.10.2]{KL}. Then $\dim V=r\dim \overline{V}_1$.
Note that 
$\dim \overline{V}_1>2$. (Indeed, if $\dim \overline{V}_i=2$ then the \eis of $s$ on $\overline{V}_i$ are $a_i,a_i\up$, where  $\pm 1\neq a_i\in F$.
As $m_s(\overline{V})>1$, we have $a_i\in\{a_j,a_j\up\}$, hence $s $ is not almost cyclic on $V$, a contradiction.) So $r<\dim V/2$.


Clearly, $s$ is almost cyclic on each $\overline{V}_1\ld \overline{V}_r$. 
As these are Galois conjugate to each other, it follows that $m_s(V)\leq m_s(\overline{V}_1)+\cdots +m_s(\overline{V}_r)=r\cdot m_s(\overline{V}_1)$.
 
As above, each 
$\overline{V}_1\ld \overline{V}_r$ extends to a $\mathbf{G}$-module.
  By Corollary  \ref{hv2},  $m_s(\overline{V}_i)< \dim \overline{V}_i/2$ for every $i=1\ld r$ (which is a contradiction so the result follows), unless possibly when $\mathbf{G}$ is  of type $A_1$  and $\dim \overline{V}_1=3,4$. So we can assume that $\mathbf{G}\cong SL_2(F)$ and then $G=SL_2(q')$ for some $p$-power $q'$. 

Since $s$ is almost cyclic on  $\overline{V}$ and $s,s\up$ are conjugate in $G$, the \ei of $s$ of \mult greater than one is 1 or $-1$ (Lemma \ref{ma2}(4)).

 If $\dim \overline{V}_1=3$ and $m_s(\overline{V}_1)>1$ then $s^2\in Z(\mathbf{G})$, and $s$ is almost cyclic on $V$ implies $\dim V=3$, which is a contradiction as $r>1$. 

Let $\dim \overline{V}_1=4$. If the \hw of $\overline{V}_1$ is $3p^i\om_1$ with $p\neq 2,3$
then $s^3\in Z(\mathbf{G})$.  
Then $s$ is conjugate to an element of a subgroup $SL_2(p)$. The restrictions of $\overline{V}_j$ to $SL_2(p)$ for $j=1\ld r$ yield isomorphic modules. As $s$ is almost cyclic and $r>1$, this is a contradiction.

Suppose that the \hw of $\overline{V}_1$ is $(1+p^i)\om_1$. Then $\rho_j(\mathbf{G})\cong PSL_2(F)$ for every $j=1\ld r$, so $Z(G)$ acts trivially on $V$.

By the above, $\overline{V}_1$ is an \ir $FGL_{n/r}(q^r)$-modules and absolutely  irreducible as $F G$-module,
that is, $\rho_1(G)=\rho_1(SL_2(q'))$ is absolutely \ir subgroup of $GL_4(q^r)$. By (3), $q^{2r}$ is a power of 
$q'$, as claimed in (4). In addition, $m_s(\overline{V}_1)\in\{1,2\}$, so   $m_s(\overline{V})\in\{1,2r\}$ as required.    \enp

Observe that   the case (4) is genuine. Indeed, 
by Lemma \ref{ev3}, if $q^{2r}=q'$ then $GL_{4r}(q)$ contains an \ir subgroup isomorphic to $PSL_2(q')$ in which some element  $s $  of prime order is \ac on $V$, $m_s(V)=\dim V/2$ and $s$ is \ir on $(1-s)V$.

\begin{rem} {\rm One can wish to further refine item (4) of Lemma \ref{pc1} by determining $q'$ and the \ir groups $X\cong PSL_2(q')\subset GL_{4r}(q)$ up to conjugacy. However, we shall not work in this direction.
 Observe that, in notation of Lemma \ref{pc1}(4), 
$\rho(G)$ remains \ir in some larger groups $GL_{4r}(q^t)$. 
For instance, let $t$ be an arbitrary  prime such that $(4r,t)=1$. If  $\rho$ is reducible in $GL_{4r}(q^t)$ then $\rho(PSL_2(q'))=\rho_1(PSL_2(q'))\oplus \cdots \oplus \rho_t(PSL_2(q'))$, where  $t|4r$. This is a contradiction.
}\end{rem}

\begin{proof}[Proof of Theorem {\rm \ref{hv3}}] The main assertion of the theorem follows from Lemma \ref{pc1}. 

We will then prove  the converse claim. If $4|\dim V$ then this follows from Lemma \ref{ev3}. If $\dim V=3$, $p\neq2$ then consider an \irr $\rho$ of 
$SL_2(F)$ with \hw $2\om_1$, and set  $\rho'=\rho|_{SL_2(q')}$. Then for $s\in SL_2(q')$ with $s^2=-\Id$ we have $\rho'(s)=\diag(-1,1,-1)$ under a suitable basis of $V$.\enp

\begin{corol}\label{co5} In assumptions of Lemma {\rm \ref{pc1}} suppose that $q=2$. Then item $(3)$ or $(4)$ 
Lemma {\rm \ref{pc1}} holds  and $q'=2^{2r}$.  In addition, $\rho(G)\subset SO^-_{4r}(q)$.\end{corol}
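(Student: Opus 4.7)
With $q=2$ we have $p=2$, so items $(1)$ and $(2)$ of Lemma \ref{pc1} are impossible (they require $p\ne 2$ and $p\ne 2,3$ respectively), and in item $(3)$ the conditions $q'>2$ and $q'\mid q^2=4$ force $q'=4=2^{2r}$ with $r=1$. So only item $(4)$ with $r>1$ needs work. Writing $q'=2^d$ and using that the minimal field of definition of the $4$-dimensional absolutely irreducible $FG$-module $\overline{V}_1$ is $\FF_{2^r}$ (by the very definition of $r$), Lemma \ref{s2s} applied to $\rho_1$ gives $d=r$ or $d=2r$, and my plan is to rule out $d=r$.

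Assume for contradiction $d=r$, so $q'=2^r$. By Lemma \ref{s2s} the highest weight of $\rho_1$ is $(1+2^i)\om_1$ for some $i\in\{1,\ldots,r-1\}$ with $i\ne r/2$; writing $s=\diag(a,a^{-1})$ and using $\rho_1=V_{\om_1}\otimes V_{\om_1}^{Fr^i}$ one computes
\[
\rho_1(s)=\diag(a^{1+2^i},\,a^{1-2^i},\,a^{-1+2^i},\,a^{-1-2^i}).
\]
Every $\SL_2$-representation is self-dual, so by Lemma \ref{tt4}(1) together with $p=2$ the only multiplicity-$>1$ eigenvalue of $\rho(s)$ must equal $1$; in particular the doubled eigenvalue of $\rho_1(s)$ is $1$. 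Enumerating the possible coincidences among the four entries above and discarding those forcing $s\in Z(\mathbf{G})$ leaves only $|a|\mid 2^i-1$ or $|a|\mid 2^i+1$; in either case $\rho_1(s)=\diag(1,1,b,b^{-1})$ with $b=a^2$ and $b^{2^i}\in\{b,b^{-1}\}$.

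This is the main step. Since $i\in\{1,\ldots,r-1\}$ the Galois conjugate $\rho_{i+1}=\rho_1^{\sigma^i}$ is one of the summands $\overline V_2,\ldots,\overline V_r$ of $\overline V$, and its eigenvalues on $s$ are the $\sigma^i$-images of those of $\rho_1(s)$, namely $\{1,1,b^{2^i},b^{-2^i}\}=\{1,1,b,b^{-1}\}$. Hence $b\ne 1$ appears as an eigenvalue of both $\rho_1(s)$ and $\rho_{i+1}(s)$, so it has multiplicity at least $2$ in $\rho(s)$, contradicting almost cyclicity. This rules out $d=r$ and establishes $q'=2^{2r}$.

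For the final assertion, write $q'=(2^r)^2$ and invoke the classical exceptional isomorphism $\PSL_2(Q^2)\cong P\Om^-_4(Q)$ with $Q=2^r$: this identifies $G\cong \Om^-_4(2^r)\subset \SO^-_4(2^r)$, and up to equivalence $\rho_1$ is the natural $4$-dimensional embedding, so $\rho_1(G)$ preserves a quadratic form of minus type on the $4$-dimensional $\FF_{2^r}$-space. Restricting scalars to $\FF_2$ via the trace converts this into a $4r$-dimensional $\FF_2$-quadratic form whose type remains minus, by a Witt-decomposition check (a hyperbolic plane over $\FF_{2^r}$ restricts to $r$ hyperbolic $\FF_2$-planes, while the anisotropic $2$-dimensional norm form $x\bar{x}$ restricts to an $\FF_2$-form with $2$-dimensional minus-type anisotropic kernel). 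The hard part of the whole proof is the multiplicity count excluding $d=r$; once self-duality and Lemma \ref{tt4}(1) pin the doubled eigenvalue to $1$, the Galois-conjugate trick finishes cleanly.
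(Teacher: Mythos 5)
Your proof is correct, and it reaches the paper's conclusion through the same two-stage skeleton (reduce to $q'\in\{2^r,2^{2r}\}$, then exclude $2^r$) but by genuinely different mechanisms at both stages. The paper obtains the reduction by working with the highest weight $(1+2^i)\om_1$ directly: Lemma \ref{s1t} places $s$ in $SL_2(2^i)$ and each $\rho_l(s)$ in $GL_4(2^i)$, almost cyclicity forces the restrictions $\overline{V}_l|_{SL_2(2^i)}$ to be pairwise non-isomorphic even though they are Galois twists over $\FF_{2^r}$, and this yields $i=r$, hence $q'\in\{2^r,2^{2r}\}$; the value $q'=2^r$ is then discarded because $\rho_1(SL_2(2^r))$ would be reducible by Lemma \ref{s1t}. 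You instead get the dichotomy from Lemma \ref{s2s} together with the (correct) observation that $\FF_{2^r}$ is the minimal realization field of $\overline{V}_1$, and you discard $q'=2^r$ by an explicit spectral computation: the repeated eigenvalue of $\rho_1(s)$ is $1$, the remaining eigenvalues are $b^{\pm 1}$ with $b\ne 1$ and $b^{2^i}\in\{b,b^{-1}\}$, so the Galois-conjugate summand $\overline{V}_1^{\sigma^i}$ (which is one of $\overline{V}_2,\ldots,\overline{V}_r$ precisely because $1\le i\le r-1$) reproduces the eigenvalue $b$, producing two distinct eigenvalues of multiplicity greater than one. Both arguments exploit the same tension between almost cyclicity and Galois-conjugate summands; yours is more self-contained and makes the contradiction concrete at the eigenvalue level, while the paper's is shorter because Lemma \ref{s1t} already packages both the field of definition and the reducibility over $\FF_{2^r}$. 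For the final containment $\rho(G)\subset SO^-_{4r}(2)$ the two proofs agree in substance: the paper quotes Huppert's Hilfsatz 1(d) for the fact that restriction of scalars from $\FF_{2^r}$ to $\FF_2$ preserves the minus type, which is exactly the Witt-decomposition check you sketch; your assertion that the trace of the binary anisotropic norm form remains of minus type is stated rather than verified, so if you prefer not to carry out that computation you can simply cite Huppert as the paper does.
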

 
\bp It is obvious that (1) and (2)  of Lemma {\rm \ref{pc1}} do not hold. Let $q'$ be as in Lemma \ref{pc1}. If (3) holds then $q'|4$ and $q>2$,
so $q'=4$ as required. Let (4) hold. Suppose the contrary, that $q'<2^{2r}$. 
  We have seen in the proof of Lemma \ref{pc1} that $V\otimes F=\overline{V}_1\oplus \cdots \oplus \overline{V}_r$,
where       $\overline{V}_1\ld \overline{V}_r$  are \ir $SL_2(F)$-modules of dimension 4, $s$ is \ac on each $\overline{V}_l $  and $m_s(\overline{V}_l)=2$ for $l=1\ld r$. 
Let $\om= (2^j+2^i)\om_1$ be the \hw of $V_1$, where  $1\leq  j<i$. By Lemma \ref{ft1}, 
we can assume here that  $j=1$. 
By Lemma \ref{s1t}, $s$ is conjugate to an element of $SL_2(2^i)$ and $\rho_l(s)$ is conjugate to an element of $GL_4(2^i)$, where $\rho_l$ is the \rep afforded by $\overline{V}_l$. As $s$ is \ac on $V$,
the modules $\overline{V}_l|_{SL_2(2^i)}$ for $l=1\ld r$ are pairwise non-isomorphic. In addition, these are twisted (or conjugate) to each other by elements of the Galois group ${\rm Gal}(\FF_{2^r}/\FF_2)$. \itf $i=r$ and $\om=(1+2^r)\om_1$. So $q'$ is a power of $2^r$ and $2^{2r}$ is a power of $q'$ by Lemma \ref{pc1}(4).  Here $q'\neq 2^r$ as $\rho_l(G)$ is \ir (see Lemma \ref{s1t}).
So $q'=2^{2r}$ and $G=SL_2(2^{2r}) $ as claimed. In addition, $\rho_1(G)\subset O^-_4(2^r)$, and hence 
$\rho(G)\subset O^-_{4r}(2)$ by \cite[Hilfsatz 1(d) and  Bemerkung at page 148]{Hup}.   \enp

\subsection{Non-regular semisimple elements in representations of quasi-simple finite groups of Lie type}

In order to deal uniformly with $p=2$ and $p>2$ we denote by $SO_8(F) $ the connected component of the orthogonal group $O_8(F)$.

\bl{d43} Let $s\in {\mathbf{G}}=Spin_8(F) $ be a non-central semisimple element, 
and V the natural $G$-module. Suppose that s is almost cyclic on V. 

$(1)$  If $\lam$ is an \ei of s on V of \mult $m_s(V)>1$ then $\lam\in\{1,-1\}$ and either $1$ or $-1$ is not an \ei of $s$.

$(2)$ Let $G\cong {}^3D_4(q)$ and $s\in G$. 
Then s is regular.\el

\bp  Set $\mathbf{H}=SO_8(F)$ and let $h:=\mathbf{G}\ra \mathbf{H}$
be the \rep of $\mathbf{G}$ with \hw $\om_1$.

 (1) By Lemma \ref{tt4}, 
$\lam\up$ is an \ei of $s$ on $V$ of \mult equal to that of $\lam.$ As $s$ is  almost cyclic on $V$, we have 
  $\lam\in\{1,-1\}$. The  $1$-eigenspace and the $-1$-eigenspace 
are well known to be non-degenerate and of even dimension.  As $s$ is almost cyclic on $V$, both $1,-1$ cannot be \eis of $s$. \itf $C_{\mathbf{H}}(s)$
is connected, see \cite[Lemma 2.2]{z14}.

 (2)  Let $V_1\subset V$ be  the eigenspace of dimension $m_s(V)$.  We have $V=V_1\oplus V_2$ for some $s$-stable subspace $V_2$ of $V$, and $s$ is cyclic on $V_2$. Then $C_{\mathbf{H}}(h(s))$ contains the subgroup $\mathbf{Y}_1:=\diag(SO(V_1),\Id)$. 

Suppose the contrary, that $s$ is not regular.
Then $C_{G}(s)$ and hence $C_{\mathbf{H}}(h(s))$ contains a unipotent element $u\neq 1$.
If   $\dim V_1=2$ then $u$
is a transvection, however, $\mathbf{H}$ contains no transvection. So $\dim V_1\in\{6,4\}$. 

It is known that $C_{\mathbf {G}}(s)$ is a reductive group  and contains a maximal torus $T$ of $\mathbf {G}$ \cite[Theorem 5.8]{Spr}.

Note that $ \mathbf{G}=Spin_8(F)$ is the universal group of type $D_4$. Let $\si$ be the Steinberg endomorphism of $ \mathbf{G}$ such that $ G={\mathbf G}^{\si}:=\{g\in \mathbf{G}:\si ( g)=g\}$. In particular, $\si (s)=s$ and hence $\si (C_{\mathbf {G}}(s))=C_{\mathbf {G}}(s)$. Then $C_G(s)=
C_{\mathbf {G}}(s)^{\si}$ is a group of maximal rank  in $G$ in terms of \cite{LSS}. Maximal subgroups $M$ of maximal rank in $G$ are determined in \cite{LSS}, and those containing a non-trivial unipotent element are
listed in \cite[Table 5.1]{LSS}. This  tells us that $M$ contains a normal subgroup $M_1$   isomorphic either  to $SL_2(q)\circ SL_2(q^3)$ or to $SL_3(q)$ or to $SU_3(q)$, and  the quotient $M/M_1$ is abelian.  

Let $\mathbf{Y}$ be the semisimple part of $C_{\mathbf{G}}(s).$ Then $h(\mathbf{Y})=\mathbf{Y}_1$.

Suppose first that  $\dim V_1=6$.
Then $\mathbf{Y}\cong D_3\cong A_3\cong SL_4(F)$. 

Obviously,  the restriction of $\si$ to $\mathbf{Y}$ is a Steinberg endomorphism of $\mathbf{Y}$.
 Steinberg endomorphisms of simple algebraic groups  are classified, see \cite[Theorem 22.5 and Table 22.1]{MT}. In particular,  $C_{\mathbf{G}}(s)^{\si}$ contains $\mathbf{Y}^{\si}=SL_4(q)$ or $SU_4(q)$. However, none of these groups is contained in $M$.

Suppose that  $\dim V_1=4$. Set $\mathbf{X}=C_{\mathbf{G} }(\mathbf{Y})^\circ $ 
so $\mathbf{X}V_i=V_i$ for $i=1,2$ and $\mathbf{X}$ acts trivially on $V_1$. Clearly,   
$\si (\mathbf{X})=\mathbf{X}$. Set $\mathbf{K}=\mathbf{X}\mathbf{Y}$; then $\si(\mathbf{K})=\mathbf{K}$ and the rank of $\mathbf{K}$ equals 4. Therefore, $K:=\mathbf{K}^{\si}$ is a subgroup of   maximal rank in $G$ and hence $K\subseteq M$ by the above. 

Observe that $\mathbf{X}\cong\mathbf{Y}\cong Spin_4(F)\cong \mathbf{X}_1\circ \mathbf{X}_2$, where  
$\mathbf{X}_1\cong \mathbf{X}_2\cong SL_2(F)$. 
As $\si (\mathbf{X})=  \mathbf{X}$, we have  either (i) $\si (\mathbf{X}_i)= \mathbf{X}_i$ for $i=1,2$ or (ii) $\si (\mathbf{X}_1)=  \mathbf{X}_2$ and $\si (\mathbf{X}_2)= \mathbf{X}_1$. We show that (ii) does not hold. 

Indeed, by Steinberg's famous theorem \cite[Theorem 21.5]{MT}, a surjective endomorphism of $\mathbf{G} $ is either 
Steinberg or an automorphism. \itf  $\si^i$ is Steinberg for every $i>0$.
In particular, $\si^2$ stabilizes $\mathbf{X}_1$ and $\mathbf{X}_2$. 
Moreover, $\si^3$ acts on maximal tori $T$ of $ \mathbf{H}$ and of $\mathbf{K}$ by sending $t\in T$ to $t^{q^3}$, and hence stabilizes $\mathbf{X}_1$ and $\mathbf{X}_2$ too. For a moment view $\si$ as the abstract group isomorphism $\mathbf{H}\ra \mathbf{H}$. Then $\si=\si^3\si^{-2}$ must stabililze $\mathbf{X}_1$ and $\mathbf{X}_2$. 

So (i) holds.
Similarly,   $\mathbf{Y}\cong Spin_4(F)\cong \mathbf{Y}_1\circ \mathbf{Y}_2$, where 
$\mathbf{Y}_1\cong \mathbf{Y}_2\cong SL_2(F)$ and $\si$
stabililzes both $\mathbf{Y}_1$ and $\mathbf{Y}_2$. Therefore, each group
 $\mathbf{X}^{\si}_1$,  $\mathbf{X}^{\si}_2$,   $\mathbf{Y}^{\si}_1$ and $\mathbf{Y}^{\si}_2$ contains a subgroup isomorphic to  $SL_2(p)$, and hence $K$ contains a subgroup isomorphic to some central product 
of four copies of $SL_2(p)$. However, one easily observes that $M$
contains no subgroup of this kind. This contradiction completes the proof. \enp

The \f result extends Theorem \ref{c99} to finite groups of Lie type. Observe that we use isomorphims $SL_4(q)\cong Spin^+_6(q)$ and 
$SU_4(q)\cong Spin^-_6(q)$ to interpret an \ir  6-dimensional module for
the first group as a twist of the natural module for the second one. Similarly, we use isomorphims $Sp_4(q)\cong SO_5(q)$ to interprete 
the \ir module with \hw $\om_2$ for one of these groups as the natural module for another one.

\begin{theo}\label{gg3} Let G be a universal  quasi-simple group of Lie type and 
$\phi$ a non-trivial \ir F-representation  of G. Let $s\in G$ be a non-regular non-central semisimple element. Suppose that $\phi(g)$ is almost cyclic. Then $G\in\{ SL_{n+1}(q),n>1,SU_{n+1}(q),n>1$, $Spin_{2n+1}(q),n\geq 3$, $q$ odd, $Sp_{2n}(q),n\geq 2$, $Spin^\pm_{2n}(q), n\geq 3\}$ 
and $ \dim\phi$ is 
equal to $n+1, n+1$, $2n+1$, $2n$, $2n$, respectively. 
\end{theo}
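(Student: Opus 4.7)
The plan is to reduce the statement to the algebraic-group version, Theorem~\ref{c99}, via the standard embedding of a finite group of Lie type into its ambient simple algebraic group. Write $G=\mathbf{G}^\sigma$, where $\mathbf{G}$ is a simply connected simple algebraic group over $F$ and $\sigma$ is a Steinberg endomorphism. Since $F$ is algebraically closed, the irreducible $FG$-module $V$ affording $\phi$ is absolutely irreducible and, by Steinberg's theorem, is the restriction of an irreducible rational $\mathbf{G}$-module $V_\omega$ for some $q$-restricted dominant weight $\omega$; under the natural identification, $\phi(g)$ and the action of $g$ on $V_\omega$ coincide for $g\in G$. The hypotheses transfer directly: the paper's convention defines ``regular in $G$'' as ``regular in $\mathbf{G}$'', and any $s\in G\setminus Z(G)$ cannot lie in $Z(\mathbf{G})$, since $s$ is $\sigma$-fixed and $Z(\mathbf{G})^\sigma\subseteq Z(G)$. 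Hence Theorem~\ref{c99} applies and restricts the pair $(\mathbf{G},V_\omega)$ to one of its listed cases.

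The next step is to translate those conclusions into the language of finite groups by running over all Steinberg endomorphisms $\sigma$ compatible with each Lie type. For untwisted $\sigma$ this yields $\SL_{n+1}(q)$, $\Sp_{2n}(q)$, $\Spin_{2n+1}(q)$ (with $q$ odd when $p\neq 2$) and $\Spin_{2n}^+(q)$; twisted endomorphisms produce $\SU_{n+1}(q)$, $\Spin_{2n}^-(q)$ and the triality group ${}^3D_4(q)$. The $B_n$ case with $p=2$ in Theorem~\ref{c99} merges with the $C_n$ case via the exceptional isogeny in characteristic $2$ and yields no new finite group beyond $\Sp_{2n}(q)$. The exceptional row $\mathbf{G}=A_3$, $\dim V_\omega=6$, is absorbed into the $D_n$ series via the isomorphism $A_3\cong D_3$: the finite groups are $\SL_4(q)\cong\Spin_6^+(q)$ and $\SU_4(q)\cong\Spin_6^-(q)$, and $V_{\omega_2}$ for $A_3$ becomes the natural module for $D_3$, as is explicitly noted just before the theorem's statement. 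Similarly, the exceptional row $\mathbf{G}=C_2\cong B_2$ with $p\neq 2$ and $\dim V_\omega=5$ is handled by the same device: $V_{\omega_2}$ for $C_2$ corresponds to the natural module for $B_2$ under $\Sp_4(q)\cong\Spin_5(q)$.

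The only remaining case to eliminate is $G\cong{}^3D_4(q)$, which would otherwise arise from a triality Steinberg endomorphism of $\mathbf{G}=D_4$ acting on the $8$-dimensional natural module. Here I would invoke Lemma~\ref{d43}(2), whose conclusion is precisely that every semisimple $s\in{}^3D_4(q)$ which is almost cyclic on the natural module is regular in $\mathbf{G}$---contradicting the non-regularity hypothesis of Theorem~\ref{gg3}. Finally, the rank lower bounds in the statement ($n>1$ in the $A$-series, $n\geq 2$ for $\Sp_{2n}(q)$, $n\geq 3$ for $\Spin_{2n+1}(q)$ and $\Spin_{2n}^\pm(q)$) are forced: in the excluded low-rank cases either every non-central semisimple element of $\mathbf{G}$ is already regular (so the hypothesis is vacuous) or the finite group coincides with one already listed via an exceptional isomorphism. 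The main obstacle is the careful bookkeeping of these exceptional isomorphisms and the enumeration of Steinberg endomorphisms for each Lie type, together with the application of Lemma~\ref{d43}(2) to kill the triality case; the substantive algebraic-group input is already packaged in Theorem~\ref{c99}.
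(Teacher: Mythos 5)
Your proposal is correct and follows essentially the same route as the paper: extend $\phi$ to the ambient simply connected group $\mathbf{G}$, apply Theorem \ref{c99} to pin down $(\mathbf{G},\dim\phi)$, enumerate the finite groups $\mathbf{G}^\sigma$ via the classification of Steinberg endomorphisms, and eliminate ${}^3D_4(q)$ by Lemma \ref{d43}(2) (the paper adds the one-line remark that every degree-$8$ irreducible representation of ${}^3D_4(q)$ is an automorphism-twist of the natural one, so the lemma applies to all of them). Your explicit absorption of the exceptional rows $A_3$/$\dim 6$ and $C_2\cong B_2$/$\dim 5$ into the $D$- and $B$-series is a point the paper's proof leaves implicit, and your handling of the low-rank exclusions matches the paper's dismissal of $SL_2(q)$ and ${}^2C_2(q)$ as having no non-regular non-central semisimple elements.
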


\bp It is well known that $\phi$ extends to a \rep of a simply connected simple  algebraic group 
${\mathbf G}$ such that $G={\mathbf G}^{\si}$ for a suitable Steinberg endomorphism of ${\mathbf G}$. We have $s\in {\mathbf G}$.
By  Theorem \ref{c99}, ${\mathbf G}\in\{SL_{n+1}(F),$ $Spin_{2n+1}(F),n\geq 3$, $p>2$, $Sp_{2n}(F),n\geq 2$, $Spin _{2n}(F), n\geq 3\}$ and $\dim \phi=n+1$, $2n+1$, $2n$, $2n$, respectively. 
By the classification theorem of Steinberg endomorphisms of simple algebraic groups, $G={\mathbf G}^{\si}\in\{SL_{n+1}(q),SU_{n+1}(q)$,
$Spin_{2n+1}(q),n> 3$, $q$ odd, $Sp_{2n}(q),n\geq 2$, $Spin^\pm _{2n}(q), n\geq 3, {}^3D_4(q), {}^2C_2(q)\}$. As $s\in G$ is non-regular,
we exclude from this list the groups $SL_{2}(q)\cong SU_{2}(q)$ and  ${}^2C_2(q)$, as they have no non-regular non-central element. The group
${}^3D_4(q)$ must be dropped in view Lemma \ref{d43}. Indeed, 
every \irr of $G={}^3D_4(q)$ of degree 8 is a twist of the natural 
\rep of  $G$  with  
an \au $\tau$, say, of $G$. Obviously, if $\tau(s)=t$ and $\rho(t)$ is almost cyclic then 
so is $\rho^\tau(s)=\rho(\tau(s))$.  So Lemma \ref{d43}(2) extends to every \irr of $G$ of degree 8. 
\enp

\bp[Proof of Theorem {\rm\ref{nr2}}] If $\rho$ is viewed as a \rep into   $GL_n(F)$ then $\rho=\rho_1\oplus\cdots \oplus \rho_k $, where 
$\rho_1\ld \rho_k$ are Galois conjugate \ir \reps of $G$ into $GL_{n/k}(F)$, see \cite[Lemma 2.10.2]{KL}. Each $\rho_i$, $1\leq i\leq k$, extends to a \rep 
$\tau_i:\mathbf{G}\ra GL_{n/k}(F)$, where $\mathbf{G}$ is a simple simply connected \ag such that $G=\mathbf{G}^{\si}$
for a Steinberg endomorphism $\si$ of $\mathbf{G}$ (\cite[\S 2.11]{Hub}). 
Obviously, $\tau_i(g)$   is almost cyclic for $i=1\ld k$. Then 
the result follows from the definition of a nearly natural \rep of $G$ and Theorem \ref{gg3}. \enp

\bigskip

{\it Acknowledgement. }
This paper was initiated while visiting Donna Testerman following on  our collaboration on \cite{TZ2}, \cite{TZ21} and \cite{TZ22}.  The author would like to acknowledge
the hospitality of the \'Ecole Polytechnique Fédérale de Lausanne. At the final stage a part of this work was performed during the author's visit to the Isaacs Newton  Institute for Mathematical Sciences at Cambridge, UK (June-July 2022) in the framework of the research program "Groups, representations and applications".

\bigskip

\newpage
\small
\begin{center}

Table 1: The $p$-restricted  \ir \reps of $G$ whose non-zero weights\\ are of \mult 1
and the zero weight space is not one-dimensional
\bigskip

\begin{tabular}{|l|c|c|c| }

        \hline
         $\,\,\,\,\,\,\,\,\,\,\, G$&highest weight 
&
weight 0 \mult&dimension\\
        \hline
          $A_n,\,\,n>1,\,\,\,\,p\not|\, (n+1)$&$\om_1+\om_n$& $n$&$n^2+2n$
\cr $(n,p)\neq (2,3),\, p\,|\,(n+1)$&$\om_1+\om_n$& $n-1$ &$n^2+2n-1$\cr

 $A_3$,  $p>3$&  $ 2\om_2$& 2  & $20$ \cr \hline

          $B_n, n>2,p\neq 2$& $\om_2$&  $n$& $2n^2+n$\cr
\,\,\,\,\, $p\,\,|\,(2n+1)$&$2\om_1$&$n$ &$2n^2+3n-1 $ \cr
\,\,\,\,\, $p\not|\,(2n+1)$&$2\om_1$& $n+1$&$2n^2+3n$\cr
 $B_2$, $ p\neq 2$& $ 2\om_2 $&  $2$&10 \cr
 \,\,\,\,\, $ p\neq 2,5$& $ 2\om_1 $&  $2$&14 \cr
\hline



$C_n$, $n>2,$&$2\om_1$ &$n$&$2n^2+n$\cr

\,\,\,\,\, $(n,p)\neq(3,3)$&$\om_2$& $n-1$ if $p\not|n$&$2n^2-n-1$\cr

&& $n-2$ if $p|n$,\,\,&$2n^2-n-2$\cr

$C_2$, $p\neq 2$&$2\om_1$&$2$&10\cr

\,\,\,\,\, $p\neq 2,5$&$2\om_2$&$2$&14 \cr

$C_4$, $p\neq 2,3$&$\om_4$&$2$&36\cr

\hline

 $D_n$, $n>3,p\neq 2$&$2\om_1$& $ n-2$ if $p|n$, & $2n^2+n-2$ if $p|n$,\\
&&$n-1$ if
$p\not|n$&  $2n^2+n-1$ if $p\not|n$\cr
 & $\om_2$& $n$&$2n^2-n-1$\cr
\,\,\,\,\, \,\,\,\,\, $p=2$& $\om_2$&
$n-(2,n)$ &$2n^2-n-(2,n)$\cr

\hline

          $E_6,$ \,\,\,\,$p=3$&$\om_2$&5  & 77  \cr
\,\,\,\,\, \,\,\,\,\,\,\,\,$p\neq 3$&& 6 &78\cr
\hline $E_7$, \,\,\,\,$p\neq 2$&$\om_1$ &7&133 \cr
\,\,\,\,\, \,\,\,\,\,\,\,$p= 2$&&6& 132 \\
 \hline
 $E_8$&$\om_8$ & 8&248\cr
\hline
          $F_4,$\,\,\,\,\,\,$p=2$&$\om_1$ &2 &26    \cr
\,\,\,\,\, \,\,\,\,\,\,\,$p\neq 2$&&4&52\cr
\,\,\,\,\, \,\,\,\,\,  $p\neq 3$& $\om_4$ &2 &26\cr
\hline
          $ G_2$, $\,\,\,p\neq 3$&$\om_2$&2&14
             \\
\hline
    \end{tabular}

\end{center}

\vspace{10pt}
\begin{center}


\newpage

Table 2: Non-trivial $p$-restricted $G$-modules 
 whose weights are one-dimensional
\vskip1cm

\small{

\bigskip
\begin{tabular}{|l|c|}

        \hline
         $\,\,\,\,\,\,\,\,\,\,\,  G$& highest weight  
\\

  \hline
$A_1$&$a\om_1$, $\,\,\,\, 1\leq a<p$\cr
        \hline
          $A_n, n>1$&$a\om_1$, $\,\,b\om_n$, $\,\, 0\leq a,b<p$, $\,\,\om_i$, $1<i< n$
\cr &$c\om_i+(p-1-c)\om_{i+1}$,  $\,\,i=1\ld n-1,$ $\,\, 0\leq c<p$ \cr
\hline
          $B_n$, $n\geq 2$&$\om_1,\om_n$  \cr
\hline
$B_2$, $p\neq 2,3$&$\om_1,\om_2, \frac{p-3}{2}\om_1+\om_2,\frac{p-1}{2}\om_1$
\cr\hline
          $C_n$, $n>1$, $p=2$&$\om_1,\om_n$\cr  \hline
$C_n$, $n\geq 3$, $p\neq   2$&$\om_1,
\om_{n-1}+\frac{p-3}{2}\om_n,\frac{p-1}{2}\om_n$\cr
\hline $C_2$, $p\neq 2,3$&$\om_1,\om_2,
\om_1+\frac{p-3}{2}\om_2,\frac{p-1}{2}\om_2$\cr \hline
$C_3$, $p\neq   2$&$\om_3$\cr
\hline
$D_n$, $n>3$&$\om_1,\om_{n-1}, \om_n$\cr \hline

          $E_6$&$\om_1$, $\om_6$\cr
\hline
          $E_7$&$\om_7$  \cr
\hline
          $F_4 $, $p=3$&$\om_4$ \cr
\hline
          $ G_2$, $p\neq 3$&$\om_1$
             \\
\hline $ G_2$, $p= 3$&$\om_1,\om_2$
             \\
\hline   \end{tabular}
}
\end{center}

\bigskip
  Department of Mathematics, University of Brasilia, Brasilia-DF,
70910-900 Brazil 

{\small
e-mail: alexandre.zalesski$@$gmail.com}

\end{document}